\def\rank{\mathop{\bf rank}}
\def \C {\mathbb{C}}
\def \S {\mathbb{S}}
\def \R {\mathbb{R}}
\def \IU {\frak{i}}
\author{Amin Gholami \and X. Andy Sun
\thanks{The authors are with the H. Milton Stewart School of Industrial and Systems Engineering, Georgia Institute of Technology, Atlanta, GA 30332 USA (e-mail: \email{a.gholami{@}gatech.edu}; \email{andy.sun{@}isye.gatech.edu}).}
}
\begin{document}

\title{The Impact of Damping in Second-Order Dynamical Systems with Applications to Power Grid Stability}

\maketitle

\begin{abstract}
We consider a broad class of second-order dynamical systems and study the impact of damping as a system parameter on the stability, hyperbolicity, and bifurcation in such systems.
%
We prove a monotonic effect of damping on the hyperbolicity of the equilibrium points of the corresponding first-order system. This provides a rigorous formulation and theoretical justification for the intuitive notion that damping increases stability. To establish this result, we prove a matrix perturbation result for complex symmetric matrices with positive semidefinite perturbations to their imaginary parts, which may be of independent interest. 
Furthermore, we establish necessary and sufficient conditions for the breakdown of hyperbolicity of the first-order system under damping variations in terms of observability of a pair of matrices relating damping, inertia, and Jacobian matrices, and propose sufficient conditions for Hopf bifurcation resulting from such hyperbolicity breakdown.
The developed theory has significant applications in the stability of electric power systems, which are one of the most complex and important engineering systems. In particular, we characterize the impact of damping on the hyperbolicity of the swing equation model which is the fundamental dynamical model of power systems, and demonstrate Hopf bifurcations resulting from damping variations. 
%
%
%
\end{abstract}

\begin{keywords}
Second-order dynamical systems, electric power networks, swing equation, ordinary differential equations, damping, hyperbolicity.
\end{keywords}


\section{Introduction}
\label{sec:introduction}

In this paper, we study a class of second-order ordinary differential equations (ODEs) of the form
\begin{align} \label{eq: nonlinear ode}
M \ddot{x} + D \dot{x} + f(x) = 0,
\end{align}
and its corresponding first-order system
\begin{align} \label{eq: nonlinear ode 1 order_intro}
\begin{bmatrix} \dot{x} \\ \dot{y}
\end{bmatrix}
= 
\begin{bmatrix}
0 & I \\
0 & -M^{-1}D
\end{bmatrix}
\begin{bmatrix} {x} \\ {y}
\end{bmatrix}
- M^{-1} 
\begin{bmatrix} 0 \\ f(x)
\end{bmatrix},
\end{align}
where $f:\mathbb{R}^n\to \mathbb{R}^n$ is a continuously differentiable function, the dot denotes differentiation with respect to the independent variable $t\ge0$, the dependent variable $x\in\mathbb{R}^n$ is a vector of state variables, and the coefficients $M\in\S^n$ and $D\in\S^n$ are constant $n\times n$ real symmetric matrices.
We refer to $M$ and $D$ as the inertia and damping matrices, respectively. 
We restrict our attention to the case where $M$ is nonsingular, thereby avoiding differential algebraic equations, and $D \in \S^n_+$ is positive semi-definite (PSD). We also investigate and discuss the case where $M$ and $D$ are not symmetric.  
%


An important example of \eqref{eq: nonlinear ode} is an electric power system with the set of interconnected generators $\mathcal{N}=\{1,\cdots,n\}, n\in\mathbb{N}$ characterized by the second-order system
\begin{align} \label{eq: 2nd order swing-intro}
		\frac{m_j}{\omega_s} \ddot{\delta}_j(t)+ \frac{d_j}{\omega_s} {\dot{\delta}}_j(t) = P_{m_j} - \sum \limits_{k = 1}^n { V_j  V_k Y_{jk} \cos \left( \theta _{jk} - \delta _j + \delta _k \right)} && \forall j \in \mathcal{N},
\end{align}
where $\delta\in\R^n$ is the vector of state variables. The inertia and damping matrices in this case
are $M= \frac{1}{\omega_s}  \mathbf{diag}(m_1,\cdots,m_n)$ and $D=\frac{1}{\omega_s}\mathbf{diag}(d_1,\cdots,d_n)$. 
System \eqref{eq: 2nd order swing-intro}, which is  known as the \emph{swing equations}, describes the nonlinear dynamical relation between the power output and voltage angle of generators \cite{1994-kundur-stability, 2020-fast-certificate}.
%
%
The first-order system associated with swing equations is also of the form \eqref{eq: nonlinear ode 1 order_intro}, i.e.,
%
\begin{subequations} \label{eq: swing equations -intro}
	\begin{align}
		& \dot{\delta}_j(t) = \omega_j(t) && \forall j \in \mathcal{N},  \label{eq: swing equations a-intro}\\
		&  \frac{m_j}{\omega_s} \dot{\omega}_j(t)+ \frac{d_j}{\omega_s} \omega_j(t) = P_{m_j} - \sum \limits_{k = 1}^n { V_j  V_k Y_{jk} \cos \left( \theta _{jk} - \delta _j + \delta _k \right)} && \forall j \in \mathcal{N}, \label{eq: swing equations b-intro}
	\end{align}
\end{subequations}
where $(\delta,\omega)\in\R^{n+n}$ is the vector of state variables. Note that each generator $j$ is a second-order oscillator, which is coupled to other generators through the cosine term in \cref{eq: swing equations b-intro} and the admittance $Y_{jk}$ encodes the graph structure of the power grid (see \Cref{Sec: Power System Model and Its Properties} for full details on swing equations).
%

Among the various aspects of model \eqref{eq: nonlinear ode}, the impact of damping matrix $D$ on the stability properties of the model is one of the most intriguing topics \cite{1980-Miller-Asymptotic,2013-adhikari-structural-book, 2017-koerts-second-order}. Moreover, better understanding of the damping impact in swing equations \eqref{eq: 2nd order swing-intro} is of particular importance to the stability analysis of electric power systems \cite{2012-Dorfler-synchronization}.  
%
Undamped modes and oscillations are the root causes of several blackouts, such as the WECC blackout on August 10, 1996 \cite{1996-blackout} as well as the more recent events such as the forced oscillation event on January 11, 2019 \cite{nerc2019} in the Eastern Interconnection of the U.S. power grid. In order to maintain system stability in the wake of unexpected equipment failures, many control actions taken by power system operators are directly or indirectly targeted at changing the effective damping of system \eqref{eq: 2nd order swing-intro} \cite{1994-kundur-stability,2019-Patrick-koorehdavoudi-input,2012-aminifar-wide-area-damping}. In this context, an important question is \emph{how the stability properties of power system equilibrium points change as the damping of the system changes}. Our main motivation is to rigorously address this question for the general model \eqref{eq: nonlinear ode} and show its applications in power system model \eqref{eq: 2nd order swing-intro}.

\subsection{Literature Review}
The dynamical model \eqref{eq: nonlinear ode} has been of interest to many researchers who have studied necessary and sufficient conditions for its local stability \cite{1980-skar-nontrivial-transfer-conductance1, 2020-fast-certificate} or characterization of its stability regions \cite{1988-chiang-stability-of-nonlinear}. 
When $f(x)$ is a linear function, this model coincides with the model of $n$-degree-of-freedom viscously damped vibration systems which are also extensively studied in the structural dynamics literature \cite{1984-laub-controllability,2010-Ma-decoupling,2013-adhikari-structural-book}.
%
Equation \eqref{eq: nonlinear ode} is also the cornerstone of studying many physical and engineering systems such as an $n$-generator electric power system \cite{2012-Dorfler-synchronization}, an $n$-degree-of-freedom rigid body \cite{1988-chiang-stability-of-nonlinear}, and a system of $n$ coupled oscillators \cite{2011-dorfler-critical-coupling, 2012-Dorfler-synchronization, 2005-kuramoto-review}, in particular Kuramoto oscillators with inertia \cite{2016-Igor-Inertia-Kuramoto,2020-Igor-Inertia-Kuramoto}.

Regarding damping effects in power systems, the results are sporadic and mostly based on empirical studies of small scale power systems. For example, it is known that the lossless swing equations (i.e., when the transfer
conductances of power grid are zero, which corresponds to $\nabla f(x)$ in \eqref{eq: nonlinear ode} being a real symmetric matrix for all $x$) have no periodic solutions, provided that all generators have a positive damping value \cite{1982-Arapostathis-global-analysis-periodicSol}. It is also shown by numerical simulation that subcritical and supercritical Hopf bifurcations, and as a consequence, the emergence of periodic solutions, could happen if the swing equations of a two-generator network are augmented to include any of the following four features:  variable damping, frequency-dependent electrical torque, lossy transmission lines, and excitation control \cite{1981-Abed-Oscillations, 1986-Alexander-Oscillatory-Solutions}. Hopf bifurcation is also demonstrated in a three-generator undamped system as the load of the system changes \cite{1989-Kwatny-Energy-Analysis-Flutter}, where several energy functions for such undamped lossy swing equations in the neighborhood of points of Hopf bifurcation are developed to help characterize Hopf bifurcation in terms of energy properties. Furthermore, a frequency domain analysis to identify the stability of the periodic orbits created by a Hopf bifurcation is presented in \cite{1990-Kwatny-Frequency-Analysis_Hopf}.
The existence and the properties of limit cycles in power systems with higher-order models are also numerically analyzed in  \cite{2007-Hiskens-Limit-cycle2,2005-Hiskens-limit-cycle1}.

Another set of literature relevant to our work studies the role of power system parameters in the stability of its equilibrium points. For instance, the work presented in \cite{2019-Patrick-koorehdavoudi-input} examines the dependence of the transfer functions on the system parameters in the swing equation model. In \cite{2017-paganini-global}, the role of inertia in the frequency response of the system is studied. Moreover, it is shown how different dynamical models can lead to different conclusions. Finally, the works on frequency and transient stabilities in power systems \cite{2013-Lee-Power-dynamics-stochastic,2014-Low-NaLi-stability,2016-Vu-framework,2005-ortega-transient,2018-Dorfler-robust} are conceptually related to our work.
%



\subsection{Contributions}
This paper presents a thorough theoretical analysis of the role of damping in the stability of model \eqref{eq: nonlinear ode}-\eqref{eq: nonlinear ode 1 order_intro}. Our results provide rigorous formulation and theoretical justification for the intuitive notion that damping increases stability. The results also characterize the hyperbolicity and Hopf bifurcation of an equilibrium point of \eqref{eq: nonlinear ode 1 order_intro} through the inertia $M$, damping $D$, and  Jacobian $\nabla f$ matrices. These general results are applied to swing equations \eqref{eq: 2nd order swing-intro} to provide new insights into the damping effects on the stability of power grids.
%

The contributions and main results of this paper are summarized below.
\begin{enumerate}
    \item We show that increasing damping has a monotonic effect on the stability of equilibrium points in a large class of ODEs of the form \eqref{eq: nonlinear ode} and \eqref{eq: nonlinear ode 1 order_intro}. In particular, we show that, when $M$ is nonsingular symmetric, $D$ is symmetric PSD, and $\nabla f(x_0)$ is symmetric at an equilibrium point $(x_0,0)$ of the first-order system \eqref{eq: nonlinear ode 1 order_intro}, if the damping matrix $D$ is perturbed to $D'$ which is more PSD than $D$, i.e. $D'-D\in\S^n_+$, then the set of eigenvalues of the Jacobian of \cref{eq: nonlinear ode 1 order_intro} at $(x_0,0)$ that have a zero real part will not enlarge as a set (\cref{thrm: monotonic damping behaviour}). We also show that these conditions on $M, D, \nabla f(x_0)$ cannot be relaxed. To establish this result, we prove that the rank of a complex symmetric matrix with PSD imaginary part does not decrease if its imaginary part is perturbed by a real symmetric PSD matrix (\cref{thrm: rank}), which may be of independent interest in the matrix perturbation theory.

    \item 
    We propose a necessary and sufficient condition for an equilibrium point $(x_0,0)$ of the first-order system \eqref{eq: nonlinear ode 1 order_intro} to be hyperbolic. Specifically, when $M$ and $\nabla f(x_0)$ are symmetric positive definite and $D$ is symmetric PSD, then $(x_0,0)$ is hyperbolic if and only if the pair ($M^{-1}\nabla f(x_0), M^{-1}D$) is observable (\cref{thrm: nec and suf for pure imaginary lossless}). We extend the necessary condition to the general case where $M, D, \nabla f(x_0)$ are not symmetric (\cref{thrm: nec and suf for pure imaginary lossy}). Moreover, we characterize a set of sufficient conditions for the occurrence of Hopf bifurcation, when the damping matrix varies as a smooth function of a one dimensional bifurcation parameter (\cref{coro: Hopf bifurcation} and \cref{thm: fold and Hopf bifurcation}).
    
    \item We show that the theoretical results have key applications in the stability of electric power systems. 
    We propose a set of necessary and sufficient conditions for breaking the hyperbolicity in lossless power systems (\cref{prop: nec and suf for pure imaginary lossless power system}). We prove that in a lossy system with two or three generators, as long as only one generator is undamped, any equilibrium point is hyperbolic (\cref{prop:hyperbolicity n2 n3}), and as soon as there are more than one undamped generator, a lossy system with any $n\ge 2$ generators may lose hyperbolicity at its equilibrium points (\cref{prop: non-hyper example}). 
    Finally, we perform bifurcation analysis to detect Hopf bifurcation and identify its type based on two interesting case studies.
 \end{enumerate}

\subsection{Organization}
The rest of our paper is organized as follows. \Cref{sec: Background} introduces some notation and provides the problem statement. In \Cref{Sec: Monotonic Behavior of Damping}, we rigorously prove that damping has a monotonic effect on the local stability of a large class of ODEs. \Cref{Sec: Impact of Damping in Hopf Bifurcation} further investigates the impact of damping on hyperbolicity and bifurcation and presents a set of necessary and/or sufficient conditions for breaking the hyperbolicity and occurrence of bifurcations. \Cref{Sec: Power System Model and Its Properties} introduces the power system model (i.e., swing equations), provides a graph-theoretic interpretation of the system, and analyzes the practical applications of our theoretical results in power systems. \Cref{Sec: Computational Experiments} further illustrates the developed theoretical results through numerical examples, and finally, the paper concludes with \Cref{Sec: Conclusions}.
\section{Background} \label{sec: Background}
\subsection{Notations}
We use $\mathbb{C_{-/+}}$ to denote the set of complex numbers with negative/positive real part, and $\mathbb{C}_{0}$ to denote the set of complex numbers with zero real part.  $\IU=\sqrt{-1}$ is the imaginary unit. If $A\in\mathbb{C}^{m\times n}$, the transpose of $A$ is denoted by $A^\top$, the real part of $A$ is denoted by $\mathrm{Re}(A)$, and the imaginary part of $A$ is denoted by $\mathrm{Im}(A)$. The conjugate transpose of $A$ is denoted by $A^*$ and defined by $A^* = \Bar{A}^\top$, in which $\Bar{A}$ is the entrywise conjugate.
The matrix $A\in\mathbb{C}^{n \times n}$ is said to be symmetric if $A^\top = A$, Hermitian if $A^* = A$, and unitary if $A^*A = I$. The spectrum of a matrix $A\in\mathbb{R}^{n\times n}$ is denoted by $\sigma(A)$. We use $\mathbb{S}^n$ to denote the set of real symmetric $n\times n$ matrices, $\mathbb{S}^n_+$ to denote the set of real symmetric PSD $n\times n$ matrices, and $\mathbb{S}^n_{++}$ to denote the set of real symmetric positive definite $n\times n$ matrices. For matrices $A$ and $B$, the relation $B \succeq A$ means that $A$ and $B$ are real symmetric matrices of the same size such that $B-A$ is PSD; we write $A \succeq 0$ to express the fact that $A$ is a real symmetric PSD matrix. Strict version $B \succ A$ of $B \succeq A$ means that $B-A$ is real symmetric positive definite, and $A\succ 0$ means that $A$ is real symmetric positive definite.  
%

\subsection{Problem Statement} \label{subsec: Problem Statement}
Consider the second-order dynamical system \eqref{eq: nonlinear ode}.
The smoothness (continuous differentiability) of $f$ is a sufficient condition for the existence and uniqueness of solution. 
%
%
%
We transform \eqref{eq: nonlinear ode} into a system of $2n$ first-order ODEs of the form
\begin{align} \label{eq: nonlinear ode 1 order}
\begin{bmatrix} \dot{x} \\ \dot{y}
\end{bmatrix}
= 
\begin{bmatrix}
0 & I \\
0 & -M^{-1}D
\end{bmatrix}
\begin{bmatrix} {x} \\ {y}
\end{bmatrix}
- M^{-1} 
\begin{bmatrix} 0 \\ f(x)
\end{bmatrix}.
\end{align}
If $f(x_0)=0$ for some $x_0\in\mathbb{R}^n$, then $(x_0,0)\in\mathbb{R}^{n+n}$ is called an equilibrium point. The stability of such equilibrium points can be revealed by the spectrum of the Jacobian of the $2n$-dimensional vector field in \eqref{eq: nonlinear ode 1 order} evaluated at the equilibrium point. Note that $f:\R^n\to\R^n$ is a vector-valued function, and its derivative at any point $x\in\R^n$ is referred to as the Jacobian of $f$ and denoted by $\nabla f(x)\in\R^{n\times n}$. This Jacobian of $f$ should not be confused with the Jacobian of the $2n$-dimensional vector field in right-hand side of \eqref{eq: nonlinear ode 1 order}, which is 
\begin{align}\label{eq: J general case}
J(x) := \begin{bmatrix}
0 & I \\
-M^{-1} \nabla f(x)  & - M^{-1}D \\
\end{bmatrix} \in\mathbb{R}^{2n\times 2n}.
\end{align} 
If the Jacobian $J$ at an equilibrium point $(x_0,0)\in\mathbb{R}^{n+n}$
has all its eigenvalues off the imaginary axis, then we say that $(x_0,0)$ is a \emph{hyperbolic} equilibrium point. 
An interesting feature of hyperbolic equilibrium points is that they are either unstable or asymptotically stable. Breaking the hyperbolicity (say due to changing a parameter of the system), leads to bifurcation.
As mentioned before, we restrict our attention to the case where inertia matrix $M$ is nonsingular. Instead, we scrutinize the case where damping matrix $D$ is not full rank, i.e., the system is partially damped. This is a feasible scenario in real-world physical systems \cite{2017-koerts-second-order}, and as will be shown, has important implications specially in power systems.
Now, it is natural to ask the following questions:
\begin{enumerate}[(i)]
	\item How does changing the damping matrix $D$ affect the stability and hyperbolicity of equilibrium points of system \cref{eq: nonlinear ode 1 order}? \label{Q1}
	\item What are the conditions on $D$ under which an equilibrium point is hyperbolic? \label{Q2}
	\item When we lose hyperbolicity due to changing $D$, what kind of bifurcation happens? \label{Q3}
\end{enumerate}
Note that in these questions, the inertia matrix $M$ is fixed, and the bifurcation parameter only affects the damping matrix $D$.
Questions \eqref{Q1}-\eqref{Q3} will be addressed in the following sections, but before that, we present \cref{lemma: relation between ev J and ev J11} \cite{2020-fast-certificate} which provides some intuition behind the role of different factors in the spectrum of the Jacobian matrix $J$.
Let us define the concept of matrix pencil \cite{2001-Tisseur-Pencil}. Consider $n\times n$ matrices $Q_0,Q_1,$ and $Q_2$. A quadratic matrix pencil is a matrix-valued function $P:\mathbb{C}\to\mathbb{R}^{n \times n}$ given by $\lambda \mapsto P(\lambda)$ such that  $P(\lambda) = \lambda^2Q_2 + \lambda Q_1 + Q_0$.
\begin{lemma} \label{lemma: relation between ev J and ev J11}
	For any $x\in\R^n$, $\lambda$ is an eigenvalue of $J(x)$ if and only if the quadratic matrix pencil $P(\lambda):= \lambda^2 M + \lambda D + \nabla f(x)$ is singular.
\end{lemma}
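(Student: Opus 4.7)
The plan is to identify the characteristic polynomial of $J(x)$ with the determinant of the quadratic pencil $P(\lambda)$, up to a nonzero constant, by a Schur-complement computation on the $2\times 2$ block structure of $J(x)$. Since $\lambda\in\sigma(J(x))$ iff $\det(\lambda I_{2n}-J(x))=0$ and $P(\lambda)$ is singular iff $\det P(\lambda)=0$, any such identity of the form $\det(\lambda I_{2n}-J(x))=c\cdot\det P(\lambda)$ with $c\neq 0$ immediately gives the lemma.

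Concretely, I would write
\begin{equation*}
\lambda I_{2n}-J(x) \;=\; \begin{bmatrix} \lambda I_n & -I_n \\ M^{-1}\nabla f(x) & \lambda I_n+M^{-1}D\end{bmatrix}
\;=\; \begin{bmatrix} I_n & 0 \\ 0 & M^{-1}\end{bmatrix}\begin{bmatrix} \lambda I_n & -I_n \\ \nabla f(x) & \lambda M+D\end{bmatrix},
\end{equation*}
and take determinants to factor out $\det(M^{-1})\neq 0$ (this uses nonsingularity of $M$). So it suffices to show that the second factor equals $\det P(\lambda)$. For $\lambda\neq 0$, the top-left block $\lambda I_n$ is invertible, so the Schur complement formula yields
\begin{equation*}
\det\begin{bmatrix} \lambda I_n & -I_n \\ \nabla f(x) & \lambda M+D\end{bmatrix}
=\det(\lambda I_n)\,\det\!\bigl((\lambda M+D)-\nabla f(x)(\lambda I_n)^{-1}(-I_n)\bigr)
=\lambda^n\det\!\bigl(\lambda M+D+\lambda^{-1}\nabla f(x)\bigr),
\end{equation*}
and pulling one factor of $\lambda^{-1}$ from each of the $n$ rows of the second determinant collapses this to $\det(\lambda^2 M+\lambda D+\nabla f(x))=\det P(\lambda)$.

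The remaining point is the case $\lambda=0$. Here I would either invoke the fact that both sides of the resulting identity are polynomials in $\lambda$ agreeing on $\C\setminus\{0\}$ and hence everywhere, or verify it directly: using the anti-block-diagonal identity $\det\begin{bmatrix} 0 & B \\ C & E\end{bmatrix}=(-1)^n\det(B)\det(C)$ with $B=-I_n$ and $C=\nabla f(x)$, one gets $\det(-J(x))=\det(M^{-1})\det(\nabla f(x))=\det(M^{-1})\det P(0)$, matching the identity at $\lambda=0$. There is no real obstacle: the argument is a straightforward block-determinant manipulation, and the only place the hypothesis on $M$ enters is in ensuring $\det(M^{-1})\neq 0$ so that the equivalence of $\det(\lambda I_{2n}-J(x))=0$ and $\det P(\lambda)=0$ is preserved.
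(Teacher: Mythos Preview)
Your proof is correct and complete; the Schur-complement computation establishes the polynomial identity $\det(\lambda I_{2n}-J(x))=\det(M^{-1})\det P(\lambda)$, and since $M$ is nonsingular the equivalence follows immediately. Handling $\lambda=0$ by continuity of polynomials (or by the direct block computation you sketch) is fine.

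The paper takes a different route: it works at the level of eigenvectors rather than determinants. Writing $(v,u)$ for an eigenvector of $J(x)$, the eigenvalue equation forces $u=\lambda v$ and then $P(\lambda)v=0$; conversely any nonzero $v\in\ker P(\lambda)$ yields the eigenvector $(v,\lambda v)$ of $J(x)$. Your approach has the advantage of producing an identity between the characteristic polynomial of $J(x)$ and $\det P(\lambda)$, so in particular algebraic multiplicities agree. The paper's approach, on the other hand, explicitly identifies the eigenvectors of $J(x)$ as $(v,\lambda v)$ with $v\in\ker P(\lambda)$, and this structural information is reused later---for instance in the Hopf-bifurcation result (\cref{coro: Hopf bifurcation}), where the right eigenvector $r_0=(v,\IU\omega_0 v)$ appears explicitly. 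So both arguments are short and elementary, but they expose slightly different by-products.
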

\begin{proof}
	For any $x\in\R^n$, let $\lambda$ be an eigenvalue of $J(x)$ and $( v , u )$ be the corresponding eigenvector. Then	
	\begin{align} \label{eq: J cha eq}
	\begin{bmatrix}
	0 & I \\
	-M^{-1} \nabla f(x)     &     - M^{-1}D \\
	\end{bmatrix}   \begin{bmatrix} v \\u  \end{bmatrix}  = \lambda   \begin{bmatrix} v \\u  \end{bmatrix},
	\end{align}
	which implies that $ u = \lambda v$ and $-M^{-1} \nabla f(x) v - M^{-1}D u  = \lambda u$. Substituting the first equality into the second one, we get
	\begin{align} 
	\left( \nabla f(x) + \lambda D +  \lambda^2 M \right) v = 0. \label{eq: quadratic matrix pencil}
	\end{align}  
	Since $v \not = 0$ (otherwise $u = \lambda \times 0 = 0 $ which is a contradiction), equation \eqref{eq: quadratic matrix pencil} implies that the matrix pencil $P(\lambda)= \lambda^2 M + \lambda D + \nabla f(x)$ is singular.
	
	Conversely, for any $x\in\R^n$,
	suppose there exists $\lambda \in \mathbb{C}$ such that  $P(\lambda)= \lambda^2 M + \lambda D + \nabla f(x)$ is singular. Choose a nonzero $v \in   \ker(P(\lambda))$ and let $ u := \lambda v$. 
	Accordingly, the characteristic equation \eqref{eq: J cha eq} holds, and consequently, $\lambda$ is an eigenvalue of $J(x)$.
\end{proof}

To give some intution, let us pre-multiply \eqref{eq: quadratic matrix pencil} by $v^*$ to get the quadratic equation
\begin{align} 
v^* \nabla f(x) v + \lambda v^*Dv +  \lambda^2 v^*Mv  = 0, \label{eq: quadratic matrix pencil equation}
\end{align}
which has roots
\begin{align} 
\lambda_{\pm} = \frac{-v^*Dv \pm \sqrt{(v^*Dv)^2-4(v^*Mv)(v^* \nabla f(x) v)}}{2v^*Mv}.
\label{eq: quadratic matrix pencil equation roots}
\end{align}
Equation \eqref{eq: quadratic matrix pencil equation roots} provides some insights into the impact of matrices $D$, $M$, and $\nabla f(x)$ on the eigenvalues of $J$. For instance, when $D\succeq0$, it seems that increasing the damping matrix $D$ (i.e., replacing $D$ with $\hat{D}$, where $\hat{D}\succeq D$) will lead to more over-damped eigenvalues. However, this argument is not quite compelling because by changing $D$, the eigenvector $v$ would also change.     
Although several researchers have mentioned such arguments about the impact of damping \cite{2010-Ma-decoupling}, to the best of our knowledge, this impact has not been studied in the literature in a rigorous fashion. We will discuss this impact in the next section.
\section{Monotonic Effect of Damping}
\label{Sec: Monotonic Behavior of Damping}
In this section, we analytically examine the role of damping matrix $D$ in the stability of system \eqref{eq: nonlinear ode}. Specifically, we answer the following question:
let System-I and System-II be two second-order dynamical systems \eqref{eq: nonlinear ode} with partial damping matrices $D_I\succeq 0$ and $D_{II}\succeq 0$, respectively. Suppose the two systems are identical in other parameters (i.e., everything except their dampings) and $(x_0,0)\in\mathbb{R}^{2n}$ is an equilibrium point for both systems. Observe that changing the damping of system \eqref{eq: nonlinear ode} does not change the equilibrium points. Here, we focus on the case where $M$ and $L:=\nabla f(x_0)$ are symmetric (these are reasonable assumptions in many dynamical systems such as power systems). Now, if System-I is asymptotically stable, what kind of relationship between $D_I$ and $D_{II}$ will ensure that System-II is also asymptotically stable?   
%
This question has important practical consequences. 
For instance, the answer to this question will illustrate how changing the damping coefficients of generators (or equivalently, the corresponding controller parameters of inverter-based resources) in power systems will affect the stability of equilibrium points. Moreover, this question is closely intertwined with a problem in matrix perturbation theory, namely 
given a complex symmetric matrix with PSD imaginary part, how does a PSD perturbation of its imaginary part affect the rank of the matrix? We answer the matrix perturbation question in \Cref{thrm: rank}, which requires \Cref{lemma: Autonne} to \Cref{lemma: rank principal} and \Cref{Prop: diagonal complex update nonsingularity}. Finally, the main result about the monotonic effect of damping is proved in \Cref{thrm: monotonic damping behaviour}.

The following lemma on Autonne-Takagi factorization is useful.

\begin{lemma}[Autonne-Takagi factorization] \label{lemma: Autonne}
Let $S\in \mathbb{C}^{n\times n}$ be a complex matrix. Then $S^\top=S$ if and only if there is a unitary $U\in \mathbb{C}^{n\times n}$ and a nonnegative diagonal matrix $\Sigma\in \mathbb{R}^{n\times n}$ such that $S=U\Sigma U^\top$. The diagonal entries of $\Sigma$ are the singular values of $S$.
\end{lemma}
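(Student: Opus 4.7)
The plan is to prove the easy direction by direct computation and the converse by induction on $n$, with the inductive step hinging on the existence of a ``self-conjugate'' top singular vector. The forward implication is immediate: if $S = U\Sigma U^\top$ with real diagonal $\Sigma$, then $S^\top = U\Sigma^\top U^\top = U\Sigma U^\top = S$.

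For the converse I would induct on $n$. The base case $n=1$ follows from the polar form of a complex scalar. For the inductive step, let $S = S^\top \in \mathbb{C}^{n\times n}$ and let $\sigma_1 = \|S\|_2$ be its largest singular value; if $\sigma_1 = 0$ then $S = 0$ and the claim is trivial, so assume $\sigma_1 > 0$. The crux is to produce a unit vector $u\in\mathbb{C}^n$ satisfying $Su = \sigma_1\bar{u}$.

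Since $S^\top = S$ implies $\bar{S} = S^*$, the matrix $S^*S = \bar{S}S$ is Hermitian PSD with top eigenvalue $\sigma_1^2$. Define the conjugate-linear operator $T\colon\mathbb{C}^n\to\mathbb{C}^n$ by $T(v) = \overline{Sv}$; then $T^2 v = \overline{S\,\overline{Sv}} = \bar{S}Sv = S^*Sv$ as complex-linear maps. Pick a nonzero $v$ with $T^2 v = \sigma_1^2 v$ and set $u := v + Tv/\sigma_1$. Conjugate linearity gives $Tu = Tv + \sigma_1 v = \sigma_1 u$, which unpacks to $Su = \sigma_1\bar{u}$; if $u = 0$, i.e.\ $Tv = -\sigma_1 v$, then the imaginary multiple $\IU v$ works instead.

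With $u$ in hand, extend $\bar{u}$ to a unitary $U_1 = [\,\bar{u}\mid W\,]$. Using $Su = \sigma_1\bar{u}$, the symmetry $S^\top = S$, and the orthogonality $u^\top W = 0$ (which follows from $\bar{u}^* W = 0$), a block computation yields
\begin{equation*}
U_1^* S \bar{U_1} = \begin{bmatrix} \sigma_1 & 0 \\ 0 & S' \end{bmatrix}, \qquad S' := W^* S \bar{W},
\end{equation*}
with $S'$ again complex symmetric of size $(n-1)\times(n-1)$. Applying the inductive hypothesis to $S'$ and recombining yields $S = U\Sigma U^\top$; the identification of the diagonal entries of $\Sigma$ with the singular values of $S$ is read off from $SS^* = S\bar{S} = U\Sigma^2 U^*$, which is a spectral decomposition of the PSD Hermitian matrix $SS^*$. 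I expect the main obstacle to be the self-conjugate singular vector step: a generic top singular vector satisfies only $Sv = \sigma_1 z$ for some unrelated unit $z$, and leveraging $S^\top = S$ (rather than Hermitian symmetry) to upgrade this to $z = \bar{v}$ is precisely where the conjugate-linear operator $T$ and its $(\pm \sigma_1)$-eigenvectors play their essential role. A secondary subtlety is the off-diagonal block vanishing: one must verify that the orthogonality between $u$ and the columns of $W$ propagates correctly through the conjugation in $\bar{U_1}$.
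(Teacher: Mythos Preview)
Your argument is correct: the conjugate-linear operator $T(v)=\overline{Sv}$ squares to $S^*S$, the construction $u=v+\sigma_1^{-1}Tv$ (or $\IU v$ when this vanishes) produces the required self-conjugate top singular vector, and the block reduction and induction go through as you describe. The paper itself does not prove this lemma but simply cites \cite[Chapter 4]{2013-Horn-matrix-analysis}; your proof is essentially the standard inductive argument given there, so there is no substantive difference in approach to compare.
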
	
\begin{proof}
See e.g. \cite[Chapter 4]{2013-Horn-matrix-analysis}.
\end{proof}

We also need the following lemmas to derive our main results. \Cref{lemma: definiteness inverse imaginary part} generalizes a simple fact about complex numbers to complex symmetric matrices: a complex scalar $z\in\mathbb{C}, z\ne0$ has a nonnegative imaginary part if and only if $z^{-1}$ has a nonpositive imaginary part.
\begin{lemma} \label{lemma: definiteness inverse imaginary part}
	Let $S\in \mathbb{C}^{n\times n}$ be a nonsingular complex symmetric matrix. Then $\mathrm{Im}(S) \succeq 0$ if and only if $\mathrm{Im}(S^{-1})\preceq 0$. 
\end{lemma}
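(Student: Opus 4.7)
The plan is to reduce both sides of the equivalence to quadratic-form inequalities on $\mathbb{C}^n$ and then connect them through the substitution $x = Sy$. First, since $S$ is complex symmetric, I would write $S = A + \IU B$ with $A, B \in \mathbb{S}^n$ so that $\mathrm{Im}(S) = B$; moreover, since $(S^{-1})^\top = (S^\top)^{-1} = S^{-1}$, the inverse is also complex symmetric, and I would analogously write $S^{-1} = C + \IU D$ with $C, D \in \mathbb{S}^n$, so that $\mathrm{Im}(S^{-1}) = D$. Because $B$ and $D$ are real symmetric, for any $x \in \mathbb{C}^n$ (decomposed as $x = u + \IU v$ with $u, v \in \mathbb{R}^n$) the quadratic forms $x^* B x$ and $x^* D x$ are real, equal to $u^\top B u + v^\top B v$ and $u^\top D u + v^\top D v$, respectively. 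Hence $\mathrm{Im}(x^* S x) = x^* B x$ and $\mathrm{Im}(x^* S^{-1} x) = x^* D x$, so that $B \succeq 0$ (resp.\ $D \preceq 0$) is equivalent to $x^* B x \geq 0$ (resp.\ $x^* D x \leq 0$) for every $x \in \mathbb{C}^n$.

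The bridge between the two quadratic forms is the identity $(Sy)^* S^{-1} (Sy) = y^* S^* y = \overline{y^* S y}$, where the last step uses $S^* = \bar{S}$, itself a consequence of $S^\top = S$. Setting $x = Sy$ and taking imaginary parts yields $x^* D x = -\,y^* B y$. Since $S$ is nonsingular, the linear map $y \mapsto Sy$ is a bijection of $\mathbb{C}^n$, so this pointwise identity immediately gives $x^* D x \leq 0$ for all $x \in \mathbb{C}^n$ if and only if $y^* B y \geq 0$ for all $y \in \mathbb{C}^n$, which is exactly $\mathrm{Im}(S^{-1}) \preceq 0 \Longleftrightarrow \mathrm{Im}(S) \succeq 0$.

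I do not anticipate any real obstacle: the argument is the matrix analogue of the elementary scalar fact that $\mathrm{Im}(z) \geq 0 \iff \mathrm{Im}(1/z) \leq 0$ for nonzero $z \in \mathbb{C}$. The only technical care required is to verify that $\mathrm{Im}(S)$ and $\mathrm{Im}(S^{-1})$ are both real symmetric so that the PSD ordering is even well-defined, which is provided by the complex symmetry of $S$ and $S^{-1}$. The Autonne-Takagi factorization from \Cref{lemma: Autonne} is not needed at this step but presumably enters in the subsequent rank-perturbation lemmas building up to \Cref{thrm: rank}.
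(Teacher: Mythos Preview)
Your argument is correct and is genuinely different from the paper's. The paper invokes the Autonne--Takagi factorization $S=U\Sigma U^\top$, writes out $\mathrm{Im}(S)$ and $\mathrm{Im}(S^{-1})$ explicitly in terms of $U$ and $\Sigma$, and then strings together a sequence of unitary similarities and $*$-congruences to conclude that $\mathrm{Im}(S^{-1})$ and $-\mathrm{Im}(S)$ have the same inertia. Your route bypasses all of that: the single identity $(Sy)^* S^{-1}(Sy)=y^*S^*y=\overline{y^*Sy}$, valid because $S^*=\bar S$ for complex symmetric $S$, yields $x^*\,\mathrm{Im}(S^{-1})\,x=-\,y^*\,\mathrm{Im}(S)\,y$ with $x=Sy$, and nonsingularity of $S$ finishes the equivalence. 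In fact your identity can be read as the $*$-congruence $S^*\,\mathrm{Im}(S^{-1})\,S=-\mathrm{Im}(S)$, so by Sylvester's law you also recover the paper's stronger ``same inertia'' conclusion with less work. Your closing remark is accurate: Autonne--Takagi is not needed here, though the paper does use it again later in the proof of \Cref{thrm: rank}.
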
	
\begin{proof}
Since $S$ is nonsingular complex symmetric, by Autonne-Takagi factorization, there exists a unitary matrix $U$ and a diagonal positive definite matrix $\Sigma$ such that $S = U\Sigma U^\top$. The inverse $S^{-1}$ is given by $S^{-1} = \bar{U}\Sigma^{-1}U^*$. The imaginary parts of $S$ and $S^{-1}$ are 
	\begin{align*}
	& \mathrm{Im}(S) = -\frac{1}{2} \IU (U\Sigma U^\top - \bar{U}\Sigma U^*), \\
	& \mathrm{Im}(S^{-1}) = -\frac{1}{2} \IU (\bar{U}\Sigma^{-1} U^* - U\Sigma^{-1} U^\top).
	\end{align*}
	The real symmetric matrix $2\mathrm{Im}(S^{-1})=\IU (U\Sigma^{-1} U^\top-\bar{U}\Sigma^{-1} U^*)$ is unitarily similar to the Hermitian matrix $\IU(\Sigma^{-1}U^\top U - U^*\bar{U}\Sigma^{-1})$ as
	\begin{align*}
	 U^*(2\mathrm{Im}(S^{-1}))U &= U^*(\IU(U\Sigma^{-1} U^\top - \bar{U}\Sigma^{-1} U^*))U \\
	 & = \IU (\Sigma^{-1}U^\top U - U^*\bar{U}\Sigma^{-1}), 
	\end{align*} 
	and is *-congruent to $\IU (U^\top U \Sigma - \Sigma U^*\bar{U})$ as
	\begin{align*}
     \Sigma U^*(2\mathrm{Im}(S^{-1}))U\Sigma &= \IU\Sigma(\Sigma^{-1}U^\top U - U^*\bar{U}\Sigma^{-1})\Sigma \\
	& = \IU (U^\top U \Sigma - \Sigma U^*\bar{U}). 
	\end{align*}
	Note that the latter transformation is a *-congruence because $U\Sigma$ is nonsingular but not necessarily unitary. Hence, $2\mathrm{Im}(S^{-1})$ has the same eigenvalues as $\IU(\Sigma^{-1}U^\top U - U^*\bar{U}\Sigma^{-1})$ and has the same inertia as $\IU(U^\top U \Sigma - \Sigma U^*\bar{U})$ by Sylvester's law of inertia. 
	Furthermore, since $U^\top U$ is unitary and $\overline{U^\top U}=U^*\bar{U}$, then
	\begin{align*}
	\IU(U^\top U \Sigma - \Sigma U^*\bar{U}) = (U^\top U)(\IU(\Sigma U^\top U - U^*\bar{U}\Sigma))(U^*\bar{U}),
	\end{align*}
	which implies that $\IU(U^\top U \Sigma - \Sigma U^*\bar{U})$ has the same eigenvalues as $\IU(\Sigma U^\top U - U^*\bar{U}\Sigma)$. Furthermore, since 
	\begin{align*}
	U(\IU(\Sigma U^\top U - U^*\bar{U}\Sigma))U^* = \IU(U\Sigma U^\top - \bar{U}\Sigma U^*) = -2\mathrm{Im}(S),
	\end{align*}
	$\mathrm{Im}(S^{-1})$ and $-\mathrm{Im}(S)$ have the same inertia, i.e., they have the same number of positive eigenvalues and the same number of negative eigenvalues. Therefore, $\mathrm{Im}(S)\succeq 0$ if and only if all eigenvalues of $\mathrm{Im}(S^{-1})$ are nonpositive, that is, if and only if $\mathrm{Im}(S^{-1})\preceq0$.
\end{proof}

\Cref{lemma: rank-one imaginary PSD update} shows how rank-one perturbation to the imaginary part of a nonsingular complex matrix preserves its nonsingularity.
\begin{lemma} \label{lemma: rank-one imaginary PSD update}
	Let $S\in\mathbb{C}^{n\times n}$ be a nonsingular complex symmetric matrix. If $\mathrm{Im}(S)\succeq 0 $, then $S+\IU vv^\top$ is nonsingular for any real vector $v\in\mathbb{R}^n$.
\end{lemma}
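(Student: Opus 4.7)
The plan is to reduce the nonsingularity question to a scalar computation via the matrix determinant lemma, and then use \Cref{lemma: definiteness inverse imaginary part} to pin down the sign of the resulting quantity. Since $S$ is nonsingular, the matrix determinant lemma applied to the rank-one update $\IU v v^\top = (\IU v) v^\top$ gives
\begin{align*}
\det\bigl(S + \IU v v^\top\bigr) = \det(S)\bigl(1 + \IU\, v^\top S^{-1} v\bigr).
\end{align*}
So it suffices to show that $1 + \IU\, v^\top S^{-1} v \ne 0$ for every real vector $v\in\mathbb{R}^n$.

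Next I would analyze the complex scalar $v^\top S^{-1} v$. Because $S$ is complex symmetric and nonsingular, $S^{-1}$ is also complex symmetric, so its real and imaginary parts $\mathrm{Re}(S^{-1})$ and $\mathrm{Im}(S^{-1})$ are both \emph{real symmetric} matrices. Since $v$ is real, writing $S^{-1} = \mathrm{Re}(S^{-1}) + \IU\, \mathrm{Im}(S^{-1})$ yields the clean decomposition
\begin{align*}
v^\top S^{-1} v = v^\top \mathrm{Re}(S^{-1}) v + \IU\, v^\top \mathrm{Im}(S^{-1}) v,
\end{align*}
with both quadratic forms real-valued. Consequently,
\begin{align*}
1 + \IU\, v^\top S^{-1} v = \bigl(1 - v^\top \mathrm{Im}(S^{-1}) v\bigr) + \IU\, v^\top \mathrm{Re}(S^{-1}) v.
\end{align*}

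The main (and essentially only) nontrivial step is to control the real part $1 - v^\top \mathrm{Im}(S^{-1}) v$. This is exactly where the hypothesis $\mathrm{Im}(S) \succeq 0$ is used: by \Cref{lemma: definiteness inverse imaginary part}, we get $\mathrm{Im}(S^{-1}) \preceq 0$, hence $v^\top \mathrm{Im}(S^{-1}) v \le 0$ for every $v\in\mathbb{R}^n$. Therefore $1 - v^\top \mathrm{Im}(S^{-1}) v \ge 1 > 0$, which implies $1 + \IU\, v^\top S^{-1} v \ne 0$, and combining with $\det(S) \ne 0$ yields $\det(S + \IU v v^\top) \ne 0$. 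Thus $S + \IU v v^\top$ is nonsingular, completing the proof. No obstacle is anticipated beyond correctly invoking the previous lemma; the argument is a one-line computation once the determinant identity is written down.
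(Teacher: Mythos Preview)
Your proof is correct and essentially identical to the paper's: both apply the determinant formula for a rank-one update to reduce to showing $1 + \IU\, v^\top S^{-1} v \ne 0$, then invoke \Cref{lemma: definiteness inverse imaginary part} to conclude the real part $1 - v^\top \mathrm{Im}(S^{-1}) v \ge 1$. The only cosmetic difference is that the paper names the identity ``Cauchy's formula'' and routes it through $\mathrm{adj}(S)$, whereas you call it the matrix determinant lemma directly.
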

\begin{proof}
	We use Cauchy's formula for the determinant of a rank-one perturbation \cite{2013-Horn-matrix-analysis}:
	\begin{align*}
	\det(S + \IU vv^\top) &= \det(S) + \IU v^\top\mathrm{adj}(S)v\\
	&= \det(S) + \IU v^\top S^{-1}v \det(S) \\
	&= \det(S)(1 + \IU v^\top S^{-1}v) \\
	&= \det(S)(1 - v^\top\mathrm{Im}(S^{-1})v + \IU  v^\top\mathrm{Re}(S^{-1})v),
	\end{align*}
	where $\mathrm{adj}(S)$ is the adjugate of $S$, which satisfies $\mathrm{adj}(S)=(\det(S))S^{-1}$. 
	Since $\det(S)\ne0$, we only need to prove that the complex scalar $z:=(1 - v^\top\mathrm{Im}(S^{-1})v + \IU  v^\top\mathrm{Re}(S^{-1})v)$ is nonzero for any $v\in\mathbb{R}^n$.
	By \cref{lemma: definiteness inverse imaginary part}, $\mathrm{Im}(S^{-1}) \preceq 0$, thus $\mathrm{Re}(z)=1-v^\top\mathrm{Im}(S^{-1})v \ge 1$. This proves that any rank-one update on the imaginary part of $S$ is nonsingular.
\end{proof}

Now, we extend \cref{lemma: rank-one imaginary PSD update} to the case where the perturbation is a general real PSD matrix.
\begin{proposition} \label{Prop: diagonal complex update nonsingularity}
	Let $S\in\mathbb{C}^{n\times n}$ be a nonsingular complex symmetric matrix with $\mathrm{Im}(S)\succeq 0 $. Then, for any real PSD matrix $E\in\mathbb{S}^{n}_+$, $S+\IU E$ is nonsingular.
\end{proposition}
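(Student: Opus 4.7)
The plan is to reduce the general PSD perturbation to the rank-one case already handled in \Cref{lemma: rank-one imaginary PSD update}, by writing $E$ as a sum of rank-one PSD pieces and adding them one at a time, verifying that the hypotheses of the rank-one lemma are preserved along the way.

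Concretely, I would first invoke the spectral theorem for the real symmetric PSD matrix $E$: writing $E = \sum_{k=1}^{r} v_k v_k^\top$ with $r = \rank(E)$ and $v_k \in \mathbb{R}^n$ (eigenvectors scaled by the square roots of the corresponding nonnegative eigenvalues). Then I would define the sequence
\begin{equation*}
S_0 := S, \qquad S_k := S_{k-1} + \IU\, v_k v_k^\top \quad \text{for } k=1,\dots,r,
\end{equation*}
so that $S_r = S + \IU E$. The proof would proceed by induction on $k$, showing that each $S_k$ is (a) complex symmetric, (b) has $\mathrm{Im}(S_k) \succeq 0$, and (c) is nonsingular. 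Parts (a) and (b) are immediate because $v_k v_k^\top$ is real symmetric PSD, so $S_k^\top = S_{k-1}^\top + \IU (v_k v_k^\top)^\top = S_k$ and $\mathrm{Im}(S_k) = \mathrm{Im}(S_{k-1}) + v_k v_k^\top \succeq 0$ as a sum of two PSD matrices. Part (c) is where \Cref{lemma: rank-one imaginary PSD update} enters: assuming inductively that $S_{k-1}$ is nonsingular, complex symmetric, with PSD imaginary part, the lemma applied with the vector $v_k$ yields that $S_k = S_{k-1} + \IU v_k v_k^\top$ is nonsingular.

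The base case $k=0$ is exactly the hypothesis on $S$. Applying the induction for $r$ steps then gives that $S + \IU E = S_r$ is nonsingular, which is the desired conclusion.

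I do not foresee a significant obstacle; the only subtle point is checking that the PSD and symmetry properties of the imaginary part persist at each intermediate step, which is what allows us to reapply \Cref{lemma: rank-one imaginary PSD update} to $S_{k-1}$ rather than just to the original $S$. The real content of the result is already concentrated in the rank-one lemma (and ultimately in \Cref{lemma: definiteness inverse imaginary part}, whose sign information on $\mathrm{Im}(S^{-1})$ ensures the determinant does not vanish). A decomposition-plus-induction argument is therefore both natural and essentially forced by the structure of the problem.
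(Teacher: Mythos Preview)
Your proposal is correct and follows essentially the same route as the paper: decompose $E$ via the spectral theorem into a sum of real rank-one PSD terms and apply \Cref{lemma: rank-one imaginary PSD update} consecutively, checking at each step that the intermediate matrix remains complex symmetric with PSD imaginary part. Your write-up is in fact slightly more careful than the paper's (you correctly scale the eigenvectors by the square roots of the eigenvalues and make the induction explicit), but the underlying argument is identical.
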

\begin{proof}
	Since $E$ is a real PSD matrix, its eigendecomposition gives $E=\sum_{\ell=1}^n v_\ell v_\ell^\top$, where $v_\ell$ is an eigenvector scaled by the $\ell$-th eigenvalue of $E$. Now, we need to show that $S+\sum_{\ell=1}^n \IU  v_\ell v_\ell^\top$ is nonsingular. According to \cref{lemma: rank-one imaginary PSD update}, $\tilde{S}_\ell:=S + \IU  v_\ell v_\ell^\top$ is nonsingular for each $\ell \in \{1,\cdots,n\}$. Moreover, $\tilde{S}_\ell$ is a complex symmetric matrix with $\textrm{Im}(\tilde{S}_\ell)\succeq 0$. Therefore, \cref{lemma: rank-one imaginary PSD update} can be consecutively applied to conclude that $S+\IU E$ is nonsingular.
\end{proof}
\begin{remark}
	The assumption of $S$ being complex symmetric cannot be relaxed. For example, consider unsymmetric matrix
	\begin{align*}
	S = \begin{bmatrix}
	1+\IU  & \sqrt{2} \\ -\sqrt{2} & -1
	\end{bmatrix}, E = \begin{bmatrix} 0 & 0\\0 & 1\end{bmatrix}, S+\IU E = \begin{bmatrix}
	1+\IU  & \sqrt{2} \\ -\sqrt{2} & -1+\IU 
	\end{bmatrix}.
	\end{align*}
	Then, $\textrm{Im}(S)\succeq 0$, $\det(S)=1-\IU $, but $\det(S+\IU E)=0$. Likewise, the assumption of $E$ being real PSD cannot be relaxed.
\end{remark}
Before proceeding further with the analysis, let us recall the concept of principal submatrix. For $A\in\mathbb{C}^{n\times n}$ and $\alpha \subseteq \{1,\cdots,n\}$, the (sub)matrix of entries that lie in the rows and columns of $A$ indexed by $\alpha$ is called a principal submatrix of $A$ and is denoted by $A[\alpha]$. We also need \cref{lemma: rank principal} about rank principal matrices. In what follows, the direct sum of two matrices $A$ and $B$ is denoted by $A\oplus B$.
%
\begin{lemma} [rank principal matrices] \label{lemma: rank principal}
	Let $S\in \mathbb{C}^{n\times n}$ and suppose that $n>\rank(S)=r \ge 1$. If $S$ is similar to $B\oplus0_{n-r}$ (so
	$B \in  \mathbb{C}^{r\times r} $is nonsingular), then $S$ has a nonsingular $r$-by-$r$ principal submatrix, that is,
	$S$ is rank principal.
\end{lemma}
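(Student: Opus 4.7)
The plan is to prove this by comparing two expressions for the coefficient of $\lambda^{n-r}$ in the characteristic polynomial of $S$. Recall the classical identity that if $p_S(\lambda) = \det(\lambda I - S) = \sum_{k=0}^{n}(-1)^k E_k(S)\,\lambda^{n-k}$, then $E_k(S)$ equals the sum of all $k\times k$ principal minors of $S$. This will be the key tool.

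First, I would use the hypothesis that $S$ is similar to $B\oplus 0_{n-r}$ with $B\in\mathbb{C}^{r\times r}$ nonsingular. Since the characteristic polynomial is a similarity invariant, it follows that
\begin{align*}
p_S(\lambda) = \lambda^{n-r}\det(\lambda I_r - B),
\end{align*}
so the coefficient of $\lambda^{n-r}$ in $p_S(\lambda)$ equals the constant term of $\det(\lambda I_r - B)$, which is $(-1)^r\det(B)$. Because $B$ is nonsingular, this scalar is nonzero.

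Next, I would invoke the principal-minor formula quoted above for $k=r$: the coefficient of $\lambda^{n-r}$ equals $(-1)^r E_r(S)=(-1)^r\sum_{|\alpha|=r}\det(S[\alpha])$. Equating the two expressions gives
\begin{align*}
\sum_{|\alpha|=r}\det(S[\alpha]) \;=\; \det(B) \;\neq\; 0.
\end{align*}
Since this sum is nonzero, at least one summand $\det(S[\alpha_0])$ must be nonzero, which means $S[\alpha_0]$ is a nonsingular $r\times r$ principal submatrix of $S$. This is precisely the conclusion that $S$ is rank principal.

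The proof is short and essentially a bookkeeping argument; there is no genuine obstacle. The only mild care is in the signs: one must be consistent about the convention for $p_S(\lambda)$ (whether taking $\det(\lambda I - S)$ or $\det(S-\lambda I)$) so that the $(-1)^r$ factors match on both sides and cancel, leaving $\sum_{|\alpha|=r}\det(S[\alpha]) = \det(B)$. The identity $E_k(S) = \sum_{|\alpha|=k}\det(S[\alpha])$ itself is a standard fact from matrix analysis (e.g., Horn--Johnson) that I would simply cite.
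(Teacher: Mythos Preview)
Your proof is correct and uses essentially the same approach as the paper: both hinge on the identity that the coefficient of $\lambda^{n-r}$ in the characteristic polynomial equals $(-1)^r E_r(S)$, together with the similarity $S\sim B\oplus 0_{n-r}$ to see that this coefficient is $(-1)^r\det(B)\neq 0$. The paper wraps the same computation in a contradiction argument (assuming all $r\times r$ principal minors vanish forces the algebraic multiplicity of $0$ to exceed $n-r$), while you extract the conclusion directly; the key idea is identical.
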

\begin{proof}
    See \cref{proof of lemma: rank principal}.
\end{proof}
%
%
%
Now we are ready to state our main matrix perturbation result.
\begin{theorem} \label{thrm: rank}
Suppose $A\in\mathbb{S}^{n}$ is a real symmetric matrix and $D\in\mathbb{S}^{n}_+$ and $E\in\mathbb{S}^{n}_+$ are real symmetric PSD matrices. 
Then $\rank(A+\IU D)\le \rank(A+\IU D+\IU E)$. 
\end{theorem}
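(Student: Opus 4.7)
The plan is to reduce the claim to the nonsingular case that is already handled by \cref{Prop: diagonal complex update nonsingularity}, via a rank-principal argument. Set $S:=A+\IU D$ and $r:=\rank(S)$; then $S$ is complex symmetric with $\mathrm{Im}(S)=D\succeq 0$. If $r=n$, \cref{Prop: diagonal complex update nonsingularity} immediately gives $S+\IU E$ nonsingular and we are done. Otherwise, the goal is to produce an index set $\alpha\subseteq\{1,\dots,n\}$ with $|\alpha|=r$ and $S[\alpha]$ nonsingular: since $S[\alpha]$ inherits complex symmetry with PSD imaginary part $D[\alpha]$, and $E[\alpha]\succeq 0$, applying \cref{Prop: diagonal complex update nonsingularity} to $S[\alpha]$ yields $(S+\IU E)[\alpha]=S[\alpha]+\IU E[\alpha]$ nonsingular, forcing $\rank(A+\IU D+\IU E)\ge r$.

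Producing this $\alpha$ is where \cref{lemma: rank principal} enters, whose hypothesis is that $S$ be similar to $B\oplus 0_{n-r}$ with $B$ nonsingular---equivalently, that every Jordan block of $S$ for eigenvalue $0$ has size one, i.e., $\ker(S^2)=\ker(S)$. I expect this to be the main obstacle, and the key structural fact I would establish first is the identity $\ker(S)=\ker(A)\cap\ker(D)$. Indeed, for $y\in\ker(S)$, the scalar $y^*Sy=y^*Ay+\IU y^*Dy$ vanishes; since $A$ and $D$ are real symmetric, both $y^*Ay$ and $y^*Dy$ are real, so $y^*Dy=0$, and then $D\succeq 0$ forces $Dy=0$, whence $Ay=0$. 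Because $A$ and $D$ are real symmetric, conjugate-transposing $Ay=0$ and $Dy=0$ further yields $y^*A=y^*D=0$, so $y^*S=0$ for every $y\in\ker(S)$.

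With this left-kernel identity in hand, the inclusion $\ker(S^2)\subseteq\ker(S)$ follows in one line: if $S^2x=0$ and $y:=Sx$, then $y\in\ker(S)$, hence $y^*S=0$, and therefore $\|y\|^2=y^*(Sx)=(y^*S)x=0$, giving $y=0$ and hence $x\in\ker(S)$. Thus $S$ is similar to $B\oplus 0_{n-r}$ with $B\in\mathbb{C}^{r\times r}$ nonsingular, \cref{lemma: rank principal} delivers the required nonsingular $r\times r$ principal submatrix $S[\alpha]$, and the reduction of the first paragraph completes the proof. The only delicate point is the structural identity $\ker(S)=\ker(A)\cap\ker(D)$ together with the resulting rank-principality of $S$; once secured, the remainder is a direct assembly of the already-proved lemmas.
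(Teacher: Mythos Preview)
Your proof is correct and follows the same overall architecture as the paper: reduce to the nonsingular case handled by \cref{Prop: diagonal complex update nonsingularity} by locating a nonsingular $r\times r$ principal submatrix of $S=A+\IU D$ via \cref{lemma: rank principal}, then observe that principal submatrices inherit the complex-symmetric-with-PSD-imaginary-part structure.

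The only difference lies in how the hypothesis of \cref{lemma: rank principal} (that $S$ is similar to $B\oplus 0_{n-r}$ with $B$ nonsingular) is verified. The paper invokes the Autonne--Takagi factorization $S=U\Sigma U^\top$ and asserts that $S$ is ``unitarily similar'' to $\Sigma$; but Autonne--Takagi is a unitary $T$-congruence, not a similarity, so that sentence in the paper is not fully justified as written (and indeed a general complex symmetric matrix of rank $r$ need \emph{not} be similar to $B\oplus 0_{n-r}$: the PSD hypothesis on $D$ is genuinely needed here). Your route supplies exactly this missing ingredient: from $\mathrm{Im}(S)=D\succeq 0$ you derive $\ker(S)=\ker(A)\cap\ker(D)$, deduce that the left and right nullspaces of $S$ coincide, and conclude $\ker(S^2)=\ker(S)$, which is precisely the semisimplicity of the zero eigenvalue required by \cref{lemma: rank principal}. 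So your argument is the same strategy as the paper's, but with a more elementary and more rigorous justification of the key structural step.
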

\begin{proof}
Define $r:=\rank(A+\IU D)$ and note that if $r=0$, i.e., $A+\IU D$ is the zero matrix, then the rank inequality holds trivially. If $r\ge1$, the following two cases are possible.


	For $r=n$ : in this case $S:=A+\IU D$ is a nonsingular complex symmetric matrix with $\mathrm{Im}(S)\succeq 0 $, and according to \cref{Prop: diagonal complex update nonsingularity}, $A+\IU D+\IU E$ is also nonsingular, i.e., $\rank(A+\IU D+\IU E)=n$. Thus, in this case, the rank inequality $\rank(A+\IU D)\le \rank(A+\IU D+\IU E)$ holds with equality.

	 For $1\le r < n$: since $A+\IU D$ is complex symmetric, using Autonne-Takagi factorization in \cref{lemma: Autonne}, $A+\IU D = U\Sigma U^\top$ for some unitary matrix $U$ and a diagonal real PSD matrix $\Sigma$. Moreover, $r=\rank(A+\IU D)$ will be equal to the number of positive diagonal entries of $\Sigma$. In this case, $A+\IU D$ is unitarily similar to $\Sigma=B\oplus0_{n-r}$, for some nonsingular diagonal $B\in\mathbb{R}^{r\times r}$. According to \cref{lemma: rank principal}, there exists a principal submatrix of $A+\IU D$ with size $r$ that is nonsingular, that is, there exists an index set $\alpha \subseteq \{1,\cdots,n\}$ with $\textrm{card}(\alpha)=r$ such that $A[\alpha]+\IU D[\alpha]$ is nonsingular. Note that $A[\alpha]+\IU D[\alpha]$ is also complex symmetric. Now, using the same index set $\alpha$ of  rows and columns, we select the principal submatrix $E[\alpha]$ of $E$. Recall that if a matrix is PSD then all its principal submatrices are also PSD. Therefore, $D[\alpha] \succeq 0$ and $E[\alpha] \succeq 0$.
	 %
    %
     Using the same argument as in the previous case of this proof, we have $\rank(A[\alpha]+\IU D[\alpha]) = \rank(A[\alpha]+\IU D[\alpha]+\IU E[\alpha])=r$. 
     On the one hand, according to our assumption, we have $\rank(A+\IU D)=\rank(A[\alpha]+\IU D[\alpha])=r$. On the other hand, we have
	\begin{align} \label{eq: rank ineq of rank thrm}
		 \rank(A+\IU D+\IU E) \ge \rank(A[\alpha]+\IU D[\alpha]+\IU E[\alpha]) = r = \rank(A+\IU D).
	\end{align}
	Note that the inequality in \eqref{eq: rank ineq of rank thrm} holds because the rank of a principal submatrix is always less than or equal to the rank of the matrix itself. In other words, by adding more columns and rows to a (sub)matrix, the existing linearly independent rows and columns will remain linearly independent. Therefore, the rank inequality $\rank(A+\IU D)\le \rank(A+\IU D+\IU E)$ also holds in this case.
\end{proof}

We now use \cref{thrm: rank} to answer the question of how damping affects stability. In particular, \cref{thrm: monotonic damping behaviour} shows a monotonic effect of damping on system stability. Namely, when $\nabla f(x_0)$ is symmetric at an equilibrium point $(x_0,0)$, the set of eigenvalues of the Jacobian $J(x_0)$ that lie on the imaginary axis does not enlarge, as the damping matrix $D$ becomes more positive semidefinite. 
\begin{theorem}[Monotonicity of imaginary eigenvalues in response to damping] \label{thrm: monotonic damping behaviour}
    Consider the following two systems,
    \begin{align} 
         \quad & M \ddot{x} + D_{I}  \dot{x} + f(x) = 0, \tag{System-I} \label{System-I} \\
         \quad & M \ddot{x} + D_{II} \dot{x} + f(x) = 0, \tag{System-II} \label{System-II}
    \end{align}
    where $M\in\mathbb{S}^n$ is nonsingular and $D_{I},D_{II} \in \mathbb{S}^n_+$. Suppose $(x_0,0)$ is an equilibrium point of the corresponding first-order systems defined in \eqref{eq: nonlinear ode 1 order}. Assume $L:=\nabla f(x_0)\in\mathbb{S}^n$. Denote $J_I, J_{II}$ as the associated Jacobian matrices at $x_0$ as defined in \eqref{eq: J general case}. Furthermore, 
    %
    let $\mathcal{C}_{I} \subseteq \mathbb{C}_{0} $ (resp. $\mathcal{C}_{II} \subseteq \mathbb{C}_{0}$) be the set of eigenvalues of $J_{I}$ (resp. $J_{II}$) with a zero real part, which may be an empty set. Then the sets $C_I, C_{II}$ of eigenvalues on the imaginary axis satisfy the following monotonicity property, 
    \begin{align}
      D_{II} \succeq D_{I} \implies \mathcal{C}_{II}  \subseteq \mathcal{C}_{I}.  
    \end{align}
\end{theorem}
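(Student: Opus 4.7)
The plan is to reduce the statement to the matrix perturbation result \cref{thrm: rank} via the quadratic pencil characterization in \cref{lemma: relation between ev J and ev J11}. By that lemma, a complex number $\lambda$ lies in $\sigma(J_I)$ (respectively $\sigma(J_{II})$) if and only if the pencil $P_{I}(\lambda) := \lambda^2 M + \lambda D_I + L$ (respectively $P_{II}(\lambda) := \lambda^2 M + \lambda D_{II} + L$) is singular. So it suffices to prove that for every purely imaginary $\lambda$, singularity of $P_{II}(\lambda)$ forces singularity of $P_{I}(\lambda)$.

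I would write $\lambda = \IU \omega$ with $\omega \in \mathbb{R}$ and evaluate
\begin{align*}
P_{I}(\IU\omega) = (L - \omega^2 M) + \IU \omega D_{I}, \qquad P_{II}(\IU\omega) = (L - \omega^2 M) + \IU \omega D_{II}.
\end{align*}
Since $M$, $D_I$, $D_{II}$, and $L$ are all real, $J_I$ and $J_{II}$ are real matrices, so both spectra are closed under complex conjugation, which means $\mathcal{C}_I$ and $\mathcal{C}_{II}$ are symmetric about the origin on the imaginary axis. Hence it is enough to treat $\omega \ge 0$. The case $\omega = 0$ is immediate because $P_I(0) = P_{II}(0) = L$, giving $0 \in \mathcal{C}_{II} \iff 0 \in \mathcal{C}_I$.

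For $\omega > 0$, I would set $A := L - \omega^2 M \in \mathbb{S}^n$, $D := \omega D_I \in \mathbb{S}^n_+$, and $E := \omega(D_{II} - D_I) \in \mathbb{S}^n_+$, where $E \succeq 0$ because $\omega > 0$ and $D_{II} \succeq D_I$. Then $P_I(\IU\omega) = A + \IU D$ and $P_{II}(\IU\omega) = A + \IU D + \IU E$, so \cref{thrm: rank} applies directly to yield $\rank(A + \IU D) \le \rank(A + \IU D + \IU E)$. If $\IU\omega \in \mathcal{C}_{II}$, the right-hand side is strictly less than $n$; therefore $\rank P_I(\IU\omega) < n$, i.e., $P_I(\IU\omega)$ is singular and $\IU\omega \in \mathcal{C}_I$. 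Combined with the case $\omega = 0$ and the conjugate-symmetry reduction, this gives $\mathcal{C}_{II} \subseteq \mathcal{C}_I$.

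The substantive work has already been absorbed into \cref{thrm: rank}; what remains is essentially a careful rewriting of the pencil at $\lambda = \IU \omega$. The only subtle step is the reduction to $\omega \ge 0$ via conjugate symmetry, which is what lets us view $\omega D_I$ and $\omega(D_{II}-D_I)$ as real symmetric PSD perturbations added to the real symmetric matrix $A = L - \omega^2 M$, matching the hypotheses of \cref{thrm: rank} exactly. Without that sign normalization, the imaginary parts would be negative semidefinite and the rank-monotonicity result would not apply directly, so this is the place where the symmetry of the underlying data (real $M$, $D_I$, $D_{II}$, $L$) is genuinely used.
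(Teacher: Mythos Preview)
Your proof is correct and follows essentially the same approach as the paper: reduce to the quadratic pencil via \cref{lemma: relation between ev J and ev J11}, use conjugate symmetry to assume $\omega\ge 0$, and then apply \cref{thrm: rank} with $A=L-\omega^2 M$, $D=\omega D_I$, $E=\omega(D_{II}-D_I)$. The only cosmetic difference is that the paper argues the contrapositive ($\IU\beta\notin\mathcal{C}_I\Rightarrow\IU\beta\notin\mathcal{C}_{II}$) while you argue the direct inclusion; both are equivalent.
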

\begin{proof}
Recall the Jacobian matrices are defined as
\begin{align*}
 J_I = \begin{bmatrix}
0 & I \\
- M^{-1} L     & -M^{-1} D_{I}  \\
\end{bmatrix},
J_{II} = \begin{bmatrix}
0 & I \\
- M^{-1} L   &   - M^{-1} D_{II} \\
\end{bmatrix},
\end{align*}
at an equilibrium point $(x_0,0)$.
According to \cref{lemma: relation between ev J and ev J11}, $\IU \beta\in\mathcal{C}_{I}$ if and only if the quadratic matrix pencil $P_I(\IU \beta):= (L - \beta^2 M) + \IU \beta D_I$ is singular. The same argument holds for $\mathcal{C}_{II}$. Since $J_I$ and $J_{II}$ are real matrices, their complex eigenvalues will always occur in complex conjugate pairs. Therefore, without loss of generality, we assume $\beta\ge0$. Note that for any $\beta\ge0$ such that $\IU \beta\notin\mathcal{C}_{I}$ the pencil $P_I(\IU \beta)= (L - \beta^2 M) + \IU \beta D_I$ is nonsingular. 
Moreover, $(L-\beta^2M)\in\mathbb{S}^{n}$ is a real symmetric matrix and $\beta D_I\in\mathbb{S}^{n}_+$ is a real PSD matrix. According to \cref{thrm: rank},  
\begin{align*}
r & =  \rank(L-\beta^2 M +\IU \beta D_I) \\
  &\le \rank(L-\beta^2 M +\IU \beta D_I + \IU \beta(D_{II}-D_{I})) \\
  & = \rank(L-\beta^2  M +\IU \beta D_{II}),
\end{align*}
consequently, $P_{II}(\IU \beta)=L-\beta^2M+\IU \beta D_{II}$ is also nonsingular and $\IU \beta\notin\mathcal{C}_{II}$. This implies that $\mathcal{C}_{II}  \subseteq \mathcal{C}_{I}$ and completes the proof.
\end{proof}
\begin{remark}
    In the above theorem, the assumption of $L=\nabla f(x_0)$ being symmetric cannot be relaxed. For example, consider 
	\begin{align*}
	f(x_1,x_2) = \begin{bmatrix}
	2x_1 + \sqrt{2} x_2 \\ -\sqrt{2} x_1
	\end{bmatrix},
	D_{I} = \begin{bmatrix} 1 & 0\\0 & 0\end{bmatrix}, D_{II} = \begin{bmatrix} 1 & 0\\0 & 1\end{bmatrix}, M = \begin{bmatrix} 1 & 0\\0 & 1\end{bmatrix}.
	\end{align*}
	Here, the origin is the equilibrium point of the corresponding first-order systems, and $L=\nabla f(0,0)$ is not symmetric. The set of eigenvalues with zero real part in \eqref{System-I} and \eqref{System-II} are $\mathcal{C}_I=\emptyset$ and $\mathcal{C}_{II}=\{ \pm \IU  \}$. Accordingly, we have $D_{II}\succeq D_{I}$, but $\mathcal{C}_{II}  \not \subseteq \mathcal{C}_{I}$.
\end{remark}

\section{Impact of Damping on Hyperbolicity and Bifurcation}
\label{Sec: Impact of Damping in Hopf Bifurcation}
%
\subsection{Necessary and Sufficient Conditions for Breaking Hyperbolicity}
\label{subsec: Sufficient and Necessary Conditions}
We use the notion of observability from control theory to provide a necessary and sufficient condition for breaking the hyperbolicity of equilibrium points in system \eqref{eq: nonlinear ode 1 order} when the inertia, damping, and Jacobian of $f$ satisfy $M\in\S^{n}_{++}, D\in\S^n_+, \nabla f(x_0)\in\S^n_{++}$ at an equilibrium point $(x_0,0)$ (\cref{thrm: nec and suf for pure imaginary lossless}). We further provide a sufficient condition for the existence of purely imaginary eigenvalues in system \eqref{eq: nonlinear ode 1 order} when $M, D, \nabla f(x_0)$ are not symmetric (\cref{thrm: nec and suf for pure imaginary lossy}). Such conditions will pave the way for understanding Hopf bifurcations in these systems. Observability was first related to stability of second-order system \eqref{eq: nonlinear ode} by Skar \cite{1980-Skar-stability-thesis}.
\begin{definition}[observability] \label{def: Observability}
	Consider the matrices $A\in\mathbb{R}^{m\times m}$ and $B\in\mathbb{R}^{n\times m}$. The pair $(A,B)$ is observable if $Bx\ne 0$ for every right eigenvector $x$ of $A$, i.e.,
	\begin{align*}
	\forall \lambda \in \mathbb{C}, x \in \mathbb{C}^m, x\ne 0  \: \text{ s.t. } Ax = \lambda x \implies Bx \ne 0.
	\end{align*}
\end{definition}

We will show that the hyperbolicity of an equilibrium point $(x_0,0)$ of system \eqref{eq: nonlinear ode 1 order} is intertwined with the observability of the pair $(M^{-1}\nabla f(x_0),M^{-1}D)$. Our focus will remain on the role of the damping matrix $D\in\mathbb{S}^n_+$ in this matter. Note that if the damping matrix $D$ is nonsingular, the pair $(M^{-1}\nabla f(x_0),M^{-1}D)$ is always observable because the nullspace of $M^{-1}D$ is trivial. Furthermore, if the damping matrix $D$ is zero, the following lemma holds.
\begin{lemma} \label{lemma: undamped systems}
    In an undamped system (i.e., when $D=0$), for any $x\in\R^n$ the pair $(M^{-1}\nabla f(x),M^{-1}D)$ can never be observable. Moreover, for any $x\in\R^n$
    \begin{align*}
        \lambda \in\sigma(J(x))\iff \lambda^2\in\sigma(-M^{-1}\nabla f(x)).
    \end{align*}
\end{lemma}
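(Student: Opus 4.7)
The plan is to prove the two statements in sequence, both as direct consequences of \Cref{lemma: relation between ev J and ev J11} and the definition of observability.

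First, I would dispatch the observability claim. When $D=0$, the matrix $M^{-1}D$ is the zero matrix, so $M^{-1}Dx=0$ for every $x\in\mathbb{C}^n$. Since $M^{-1}\nabla f(x)$ is an $n\times n$ matrix, it always has at least one right eigenvector $v\ne 0$. For that eigenvector, $M^{-1}Dv=0$, which directly violates \Cref{def: Observability}. Hence the pair $(M^{-1}\nabla f(x),M^{-1}D)$ fails to be observable, regardless of $x$ or of $\nabla f(x)$.

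Second, I would establish the spectral equivalence by applying \Cref{lemma: relation between ev J and ev J11} with $D=0$. That lemma gives: $\lambda\in\sigma(J(x))$ if and only if the pencil $P(\lambda)=\lambda^2 M+\lambda D+\nabla f(x)$ is singular, which in the undamped case reduces to $\lambda^2 M+\nabla f(x)$ being singular. Since $M$ is invertible, this is equivalent to $\det(\lambda^2 I + M^{-1}\nabla f(x))=0$, i.e., $\lambda^2\in\sigma(-M^{-1}\nabla f(x))$. Both directions follow at once.

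I expect no real obstacle here; the only subtlety worth flagging is making sure the reader sees that we use the nonsingularity of $M$ (an assumption maintained throughout the paper) to convert singularity of the pencil into an eigenvalue condition for $-M^{-1}\nabla f(x)$, and that the eigenvector existence argument for the first claim implicitly assumes $n\ge 1$, which is part of the standing setup. The full proof should fit in a few lines and will simply cite \Cref{lemma: relation between ev J and ev J11} and \Cref{def: Observability} for the two halves respectively.
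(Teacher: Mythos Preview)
Your proposal is correct and mirrors the paper's own proof, which simply states that the first claim is an immediate consequence of \Cref{def: Observability} and the second follows from \Cref{lemma: relation between ev J and ev J11}. Your write-up just makes those citations explicit, including the use of the nonsingularity of $M$ to pass from singularity of $\lambda^2 M+\nabla f(x)$ to $\lambda^2\in\sigma(-M^{-1}\nabla f(x))$.
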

\begin{proof}
The first statement is an immediate consequence of \cref{def: Observability} and the second one follows from \cref{lemma: relation between ev J and ev J11}.
\end{proof}

The next theorem yields a necessary and sufficient condition on the damping matrix $D$ for breaking the hyperbolicity of an equilibrium point.
%
%
\begin{theorem}[hyperbolicity in second-order systems: symmetric case] \label{thrm: nec and suf for pure imaginary lossless}
    Consider the second-order ODE system \eqref{eq: nonlinear ode} with inertia matrix $M\in\mathbb{S}^n_{++}$ and damping matrix $D \in\mathbb{S}^n_+$. Suppose $(x_0,0)\in \mathbb{R}^{n+n}$ is an equilibrium point of the corresponding first-order system \eqref{eq: nonlinear ode 1 order} with the Jacobian matrix $J\in\mathbb{R}^{2n\times 2n}$ defined in \eqref{eq: J general case} such that $L=\nabla f(x_0)\in \mathbb{S}^n_{++}$. Then, the equilibrium point $(x_0,0)$ is hyperbolic if and only if the pair $(M^{-1}L,M^{-1}D)$ is observable.
    %
    %
    %
        
\end{theorem}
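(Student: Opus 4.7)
The plan is to reduce the hyperbolicity condition to the singularity of the quadratic matrix pencil $P(\IU\beta) = L - \beta^2 M + \IU\beta D$ using Lemma \ref{lemma: relation between ev J and ev J11}, and then exploit the fact that $L, M, D$ are all real symmetric with $L, M$ positive definite so that the real/imaginary parts of a null vector can be analyzed directly. The $\beta = 0$ case is immediate: since $L \succ 0$, $P(0) = L$ is nonsingular, so no zero eigenvalue of $J$ can arise. Thus everything comes down to $\beta > 0$, and by the fact that the spectrum of $J$ is conjugation-symmetric (real matrix), we may restrict to $\beta > 0$ without loss of generality.

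For the \emph{not observable $\Rightarrow$ not hyperbolic} direction, suppose there exist $\lambda \in \mathbb{C}$ and $v \neq 0$ with $M^{-1}L v = \lambda v$ and $M^{-1}D v = 0$. Since $M \in \mathbb{S}^n_{++}$ and $L \in \mathbb{S}^n_{++}$, the matrix $M^{-1}L$ is similar to $M^{-1/2} L M^{-1/2} \in \mathbb{S}^n_{++}$, so $\lambda$ must be a positive real number, and $v$ may be taken real. Setting $\beta := \sqrt{\lambda} > 0$, one has $Lv = \beta^2 Mv$ and $Dv = 0$, whence $P(\IU\beta)v = (L - \beta^2 M)v + \IU\beta Dv = 0$, so $\IU\beta \in \sigma(J)$ by Lemma \ref{lemma: relation between ev J and ev J11}, contradicting hyperbolicity.

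For the converse \emph{not hyperbolic $\Rightarrow$ not observable}, pick $\beta > 0$ and nonzero $v \in \mathbb{C}^n$ with $P(\IU\beta)v = 0$, and premultiply by $v^*$ to obtain
\begin{align*}
v^* L v - \beta^2 v^* M v + \IU\beta\, v^* D v = 0.
\end{align*}
Since $L, M, D$ are real symmetric, the quantities $v^*Lv$, $v^*Mv$, $v^*Dv$ are all real, so taking imaginary parts gives $\beta v^* D v = 0$, and since $\beta > 0$ and $D \succeq 0$ we conclude $D^{1/2}v = 0$, hence $Dv = 0$. Substituting back yields $Lv = \beta^2 Mv$, i.e.\ $M^{-1}L v = \beta^2 v$ with $M^{-1}Dv = 0$, which exhibits $v$ as an eigenvector of $M^{-1}L$ witnessing failure of observability of the pair $(M^{-1}L, M^{-1}D)$.

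I do not anticipate a genuine obstacle here; the main subtleties are just bookkeeping: (i) justifying $\beta \neq 0$ via $L \succ 0$, (ii) passing from $v^*Dv = 0$ to $Dv = 0$ using the PSD square root of $D$, and (iii) being careful that eigenvectors of $M^{-1}L$, despite $M^{-1}L$ not being symmetric, can be taken real because $M^{-1}L$ is similar to the symmetric positive definite matrix $M^{-1/2} L M^{-1/2}$, so its spectrum is real and positive. The positivity of $L$ (not merely PSD) is used precisely to rule out $\beta = 0$; the assumption $D \succeq 0$ is used to pass from $v^*Dv = 0$ to $Dv = 0$.
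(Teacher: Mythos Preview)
Your proof is correct and follows the same overall strategy as the paper's: reduce hyperbolicity to singularity of the pencil $P(\IU\beta)$ via Lemma~\ref{lemma: relation between ev J and ev J11}, and in each direction identify a null vector of the pencil with an eigenvector of $M^{-1}L$ lying in $\ker(M^{-1}D)$. The direction ``not observable $\Rightarrow$ not hyperbolic'' is essentially identical to the paper's sufficiency argument.

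For the other direction (``not hyperbolic $\Rightarrow$ not observable''), your execution is more streamlined than the paper's. You premultiply $P(\IU\beta)v=0$ by $v^*$ and read off $v^*Dv=0$ from the imaginary part, then use $D\succeq 0$ to get $Dv=0$ and hence $M^{-1}Lv=\beta^2 v$ directly. The paper instead first congruence-transforms to $\hat L=M^{-1/2}LM^{-1/2}$ and $\hat D=M^{-1/2}DM^{-1/2}$, splits $v=x+\IU y$ into real and imaginary parts, and compares the cross terms $y^\top(\hat L-\beta^2 I)x$ and $x^\top(\hat L-\beta^2 I)y$ to force $x,y\in\ker(\hat D)$, before undoing the transformation. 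Both arguments hinge on the same symmetry observation; yours simply packages it as the reality of the Hermitian form $v^*Dv$ and avoids the change of variables. The paper's route produces a real witness $M^{-1/2}x$ explicitly, but this is immaterial since Definition~\ref{def: Observability} already permits complex eigenvectors (and in any case the real or imaginary part of your $v$ works).
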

\begin{proof}
    According to \cref{lemma: undamped systems}, if $D=0$, the pair $(M^{-1}L,M^{-1}D)$ can never be observable. 
	Moreover, $M^{-1}L=M^{-\frac{1}{2}}  \hat{L}  M^{\frac{1}{2}}$, where $\hat{L}:=M^{-\frac{1}{2}} L M^{-\frac{1}{2}}$. This implies that $M^{-1}L$ is similar to (and consequently has the same eigenvalues as) $\hat{L}$. 
 	Note that $\hat{L}$ is *-congruent to $L$. According to Sylvester’s law of inertia, $\hat{L}$ and $L$ have the same inertia. Since $L\in \mathbb{S}^n_{++}$, we conclude that $\hat{L}\in \mathbb{S}^n_{++}$. Therefore, the eigenvalues of $M^{-1}L$ are real and positive, i.e., $\sigma(M^{-1}L)\subseteq \mathbb{R_{++}}=\{\lambda\in\mathbb{R}:\lambda>0\}$.
    %
	Meanwhile, when $D=0$, we have $\mu\in\sigma(J)\iff\mu^2\in\sigma(-M^{-1}L)$, hence all eigenvalues of $J$ would have zero real parts, i.e., $\sigma(J)\subseteq\mathbb{C}_0$, and consequently, the theorem holds trivially. In the sequel, we assume that $D\not=0$.

	\emph{Necessity:}
	Suppose the pair $(M^{-1}L,M^{-1}D)$ is observable, but assume the equilibrium point is not hyperbolic, and let us lead this assumption to a contradiction. Since $L=\nabla f(x_0)$ is nonsingular, \cref{lemma: relation between ev J and ev J11} asserts that $0\not\in\sigma(J)$. Therefore, there must exist $\beta>0$ such that $\IU \beta \in \sigma(J)$. 
	By \cref{lemma: relation between ev J and ev J11}, $\IU\beta \in \sigma(J)$ if and only if the matrix pencil $(M^{-1}L + \IU\beta M^{-1}D - \beta^2 I)$ is singular: 
	\begin{align*} 
	& \det \left(  M^{-\frac{1}{2}}(M^{-\frac{1}{2}}LM^{-\frac{1}{2}} + \IU \beta  M^{-\frac{1}{2}}DM^{-\frac{1}{2}} - \beta^2 I) M^{\frac{1}{2}} \right) = 0,
	\end{align*}
	or equivalently, $\exists (x + \IU y) \ne 0$ such that $x,y\in\mathbb{R}^n$ and
	\begin{align} \label{eq: proof of parially damped hopf bifurcation}
	\notag & (M^{-\frac{1}{2}}LM^{-\frac{1}{2}} + \IU \beta  M^{-\frac{1}{2}} D M^{-\frac{1}{2}} - \beta^2 I)(x+\IU y) = 0 \\
	& \iff  \begin{cases}
	(M^{-\frac{1}{2}}LM^{-\frac{1}{2}}-\beta^2 I)x - \beta M^{-\frac{1}{2}}DM^{-\frac{1}{2}} y = 0, \\ 
	(M^{-\frac{1}{2}}LM^{-\frac{1}{2}}-\beta^2 I)y + \beta M^{-\frac{1}{2}}DM^{-\frac{1}{2}} x = 0.
	\end{cases}
	\end{align}
	Let $\hat{L}:=M^{-\frac{1}{2}}LM^{-\frac{1}{2}}$, $\hat{D}:=M^{-\frac{1}{2}}DM^{-\frac{1}{2}}$, and observe that
	\begin{align*}
	\begin{cases}
	y^\top (\hat{L}-\beta^2 I)x = y^\top (\beta \hat{D} y) = \beta y^\top \hat{D} y  \ge 0,
	\\
	x^\top (\hat{L}-\beta^2 I)y = x^\top (-\beta \hat{D} x) = -\beta x^\top \hat{D} x  \le 0,
	\end{cases}
	\end{align*}
    %
	where we have used the fact that $\hat{D}$ is *-congruent to $D$. According to Sylvester’s law of inertia, $\hat{D}$ and $D$ have the same inertia. Since $D\succeq 0$, we conclude that $\hat{D}\succeq0$.
	As $(\hat{L}-\beta^2 I)$ is symmetric, we have $x^\top (\hat{L}-\beta^2 I)y = y^\top (\hat{L}-\beta^2 I)x$. Therefore, we must have $x^\top \hat{D} x = y^\top \hat{D} y  =0$. Since $\hat{D}\succeq0$, we can infer that $x\in\ker(\hat{D})$ and $y\in\ker(\hat{D})$.
	%
	Now considering $\hat{D} y = 0$ and using the first equation in \eqref{eq: proof of parially damped hopf bifurcation} we get 
	\begin{align}
	( \hat{L}-\beta^2 I)x = 0 \iff M^{-\frac{1}{2}}LM^{-\frac{1}{2}} x=\beta^2 x,
	\end{align}
	multiplying both sides from left by $M^{-\frac{1}{2}}$ we get $M^{-1}L (M^{-\frac{1}{2}} x) = \beta^2 (M^{-\frac{1}{2}} x)$. Thus, $ \hat{x}:= M^{-\frac{1}{2}} x$ is an eigenvector of $M^{-1}L$. Moreover, we have 
	\begin{align*}
	    M^{-1}D \hat{x}  = M^{-1}DM^{-\frac{1}{2}} x = M^{-\frac{1}{2}} (  \hat{D}  x) = 0,    
	\end{align*}
	which means that the pair $(M^{-1}L,M^{-1}D)$ is not observable; we have arrived at the desired contradiction.

	\emph{Sufficiency:}
	%
	Suppose the equilibrium point is hyperbolic, but assume that the pair~ $\allowbreak (M^{-1}L, M^{-1}D)$ is not observable;  we will show that this assumption leads to a contradiction.	According to \cref{def: Observability}, $\exists \lambda \in \mathbb{C}, x \in \mathbb{C}^n , x\ne 0$ such that
	\begin{align} \label{eq: observability in lossless general}
	 M^{-1}Lx = \lambda x \text{ and } M^{-1}Dx = 0.
	\end{align}
	We make the following two observations.
	Firstly, as it is shown above, we have $\sigma(M^{-1}L)\subseteq \mathbb{R_{++}}$. 
	Secondly, since $L$ is nonsingular, the eigenvalue $\lambda$ in \eqref{eq: observability in lossless general} cannot be zero.
	%
    %
	Based on the foregoing two observations, when the pair $(M^{-1}L,M^{-1}D)$ is not observable, there must exist $\lambda \in \mathbb{R_{+}}, \lambda\ne0$ and $x \in \mathbb{C}^n , x\ne 0$ such that \eqref{eq: observability in lossless general} holds.
	Define $\xi=\sqrt{-\lambda}$, which is a purely imaginary number. The quadratic pencil $M^{-1}P(\xi)=\xi^2 I + \xi M^{-1}D + M^{-1}L$ is singular because $M^{-1}P(\xi)x = \xi^2 x + \xi M^{-1}Dx + M^{-1}Lx = -\lambda x + 0 + \lambda x = 0$. By  \cref{lemma: relation between ev J and ev J11}, $\xi$ is an eigenvalue of $J$. Similarly, we can show $-\xi$ is an eigenvalue of $J$. Therefore, the equilibrium point is not hyperbolic, which is a desired contradiction.
\end{proof}

As mentioned above, if matrix $D$ is nonsingular, the pair $(M^{-1}\nabla f(x_0),\allowbreak  M^{-1}D)$ is always observable. Indeed, if we replace the assumption $D\in\S^n_+$ with $D\in\S^n_{++}$ in \cref{thrm: nec and suf for pure imaginary lossless}, then the equilibrium point $(x_0,0)$ is not only hyperbolic but also asymptotically stable. This is proved in \Cref{thrm: Stability of Symmetric Second-Order Systems with nonsing damping} in \Cref{appendix: Stability of Symmetric Second-Order Systems with Nonsingular Damping}.

Another interesting observation is that when an equilibrium point is hyperbolic, \cref{thrm: nec and suf for pure imaginary lossless} confirms the monotonic behaviour of damping in \cref{thrm: monotonic damping behaviour}. Specifically, suppose an equilibrium point $(x_0,0)$ is hyperbolic for a value of damping matrix $D_I\in\S^n_+$. \cref{thrm: nec and suf for pure imaginary lossless} implies that the pair $(M^{-1}\nabla f(x_0),M^{-1}D_I)$ is observable. Note that if we change the damping to $D_{II}\in\S^n_+$ such that $D_{II}\succeq D_I$, then the pair $(M^{-1}\nabla f(x_0),M^{-1}D_{II})$ is also observable. Hence, the equilibrium point $(x_0,0)$ of the new system with damping $D_{II}$ is also hyperbolic. This is consistent with the monotonic behaviour of damping which is proved in \cref{thrm: monotonic damping behaviour}.

Under additional assumptions, \cref{thrm: nec and suf for pure imaginary lossless} can be partially generalized to a sufficient condition for the breakdown of hyperbolicity when $L$, $M$, and $D$ are not symmetric as in the following
\begin{theorem}[hyperbolicity in second-order systems: unsymmetric case]\label{thrm: nec and suf for pure imaginary lossy}
Consider the second-order ODE system \eqref{eq: nonlinear ode} with nonsingular inertia matrix $M\in\mathbb{R}^{n\times n}$ and damping matrix $D \in\mathbb{R}^{n\times n}$. Suppose $(x_0,0)\in \mathbb{R}^{n+n}$ is an equilibrium point of the corresponding first-order system \eqref{eq: nonlinear ode 1 order} with the Jacobian matrix $J\in\mathbb{R}^{2n\times 2n}$ defined in \eqref{eq: J general case} such that $L=\nabla f(x_0)\in \mathbb{R}^{n\times n}$. If $M^{-1}L$ has a positive eigenvalue $\lambda$ with eigenvector $x$ such that $x$ is in the nullspace of $M^{-1}D$, then the spectrum of the Jacobian matrix $\sigma(J)$ contains a pair of purely imaginary eigenvalues.
\end{theorem}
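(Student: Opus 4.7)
The plan is to verify directly, using \cref{lemma: relation between ev J and ev J11}, that $\pm \IU\sqrt{\lambda}$ are eigenvalues of $J$. Since $\lambda>0$, the number $\xi := \IU\sqrt{\lambda}$ is purely imaginary and nonzero, so it suffices to show that the quadratic matrix pencil $P(\xi) = \xi^2 M + \xi D + L$ is singular (and likewise at $-\xi$). That lemma is what translates ``eigenvalue of $J$'' into ``kernel of the pencil,'' and it is stated for arbitrary (not necessarily symmetric) $M$, $D$, $L$, which is exactly what I need here.

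The verification is a one-line computation: since $M$ is nonsingular, $P(\xi)$ is singular if and only if $M^{-1}P(\xi)$ is, and pre-multiplying $P(\xi)x$ by $M^{-1}$ gives
\begin{align*}
M^{-1}P(\xi)x \;=\; \xi^{2} x + \xi (M^{-1}D)x + (M^{-1}L)x \;=\; -\lambda\,x + 0 + \lambda\,x \;=\; 0,
\end{align*}
where I have used $\xi^{2} = -\lambda$ together with the two hypotheses $M^{-1}Lx = \lambda x$ and $M^{-1}Dx = 0$. Because $x\neq 0$, this shows $P(\xi)$ is singular and hence $\xi \in \sigma(J)$ by \cref{lemma: relation between ev J and ev J11}. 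The conjugate $-\xi = \overline{\xi}$ is then automatically in $\sigma(J)$, either because $J$ is a real matrix and its complex spectrum is closed under conjugation, or by the identical computation applied at $-\xi$ (the cross term $-\xi\,M^{-1}Dx$ vanishes for the same reason), which yields the required purely imaginary pair.

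There is essentially no technical obstacle; the argument is a literal transcription of the sufficiency direction of \cref{thrm: nec and suf for pure imaginary lossless}. That earlier proof invoked the symmetry of $M$, $D$, $L$ and the positive definiteness of $L$ only to guarantee, via $*$-congruence and Sylvester's law of inertia, that every eigenvalue of $M^{-1}L$ automatically lies in $\R_{++}$, so that non-observability of the pair $(M^{-1}L,M^{-1}D)$ immediately produced a positive $\lambda$ with eigenvector in $\ker(M^{-1}D)$. In the present unsymmetric setting these structural properties are not available, so instead we assume by hand the existence of such a pair $(\lambda,x)$; this is precisely what is needed to make the cancellation $-\lambda x + \lambda x = 0$ work, and the rest of the argument carries over verbatim.
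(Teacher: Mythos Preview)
Your proof is correct and essentially identical to the paper's: both define $\xi=\IU\sqrt{\lambda}$, verify $M^{-1}P(\xi)x=-\lambda x+0+\lambda x=0$ to get singularity of the pencil, invoke \cref{lemma: relation between ev J and ev J11} to conclude $\xi\in\sigma(J)$, and obtain $-\xi$ by the same computation (or conjugation). Your added commentary explaining why the symmetry hypotheses of \cref{thrm: nec and suf for pure imaginary lossless} are unnecessary here is accurate and matches the paper's remark that the proof ``is similar to that of \Cref{thrm: nec and suf for pure imaginary lossless}.''
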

\begin{proof}
    The proof is similar to that of \Cref{thrm: nec and suf for pure imaginary lossless} and is given in \cref{proof of thrm: nec and suf for pure imaginary lossy}.
\end{proof}

\subsection{Bifurcation under Damping Variations}
\label{subsec: Bifurcation under Damping Variations}
In \Cref{subsec: Sufficient and Necessary Conditions}, we developed necessary and/or sufficient conditions for breaking the hyperbolicity through purely imaginary eigenvalues. Naturally, the next question is: what are the consequences of breaking the hyperbolicity? To answer this question, consider the parametric ODE 
\begin{align} \label{eq: parameter-dependent second order system}
     M \ddot{x} + D(\gamma) \dot{x} + f(x) = 0, 
\end{align}
which satisfies the same assumptions as \eqref{eq: nonlinear ode}. Suppose $D(\gamma)$ is a smooth function of $\gamma\in\mathbb{R}$, and $(x_0,0)\in \mathbb{R}^{n+n}$ is a hyperbolic equilibrium point of the corresponding first-order system at $\gamma=\gamma_1$ with the Jacobian matrix $J(x,\gamma)$ defined as
\begin{align}\label{eq: J general case hopd thrm proof}
	J(x, \gamma) = \begin{bmatrix}
	0 & I \\
	-M^{-1} \nabla f(x)  & - M^{-1} D(\gamma) \\
	\end{bmatrix} \in\mathbb{R}^{2n\times 2n}.
\end{align} 
Let us vary $\gamma$ from $\gamma_1$ to $\gamma_2$ and monitor the equilibrium point. There are two ways in which the hyperbolicity can be broken. Either a simple real eigenvalue approaches zero and we have $0\in\sigma(J(x_0,\gamma_2))$, or a pair of simple complex eigenvalues approaches the imaginary axis and we have $\pm \IU  \omega_0 \in \sigma( J(x_0,\gamma_2) )$ for some $\omega_0>0$. The former corresponds to a fold bifurcation, while the latter is associated with a Hopf (more accurately, Poincare-Andronov-Hopf) bifurcation\footnote{It can be proved that we need more parameters to create extra eigenvalues on the imaginary axis unless the system has special properties such as symmetry \cite{2004-kuznetsov-hopf}.}. The next theorem states the precise conditions for a Hopf bifurcation to occur in system \eqref{eq: parameter-dependent second order system}.

\begin{theorem} \label{coro: Hopf bifurcation}
	Consider the parametric ODE \eqref{eq: parameter-dependent second order system}, with inertia matrix $M\in\mathbb{S}^n_{++}$ and damping matrix $D(\gamma) \in\mathbb{S}^n_+$. Suppose $D(\gamma)$ is a smooth function of $\gamma$, $(x_0,0)\in \mathbb{R}^{n+n}$ is an isolated equilibrium point of the corresponding first-order system, and $L:=\nabla f(x_0)\in \mathbb{S}^n_{++}$. Assume the following conditions are satisfied:
	\begin{enumerate}[(i)]
		\item There exists $\gamma_0\in\mathbb{R}$ such that the pair $(M^{-1}L, M^{-1}D(\gamma_0) )$ is not observable, that is, $\exists \lambda\in\mathbb{C},v\in\mathbb{C}^n, v\ne0$ such that 
		\begin{align} \label{eq: hopf observ}
		M^{-1} L v = \lambda v \text{ and } M^{-1}D(\gamma_0)v=0.
		\end{align}\label{coro: hopf conditions_1}
		\vspace{-5mm}
		\item $\IU \omega_0$ is a simple eigenvalue of $J(x_0,\gamma_0)$, where $\omega_0=\sqrt{\lambda}$. 
		\label{coro: hopf conditions_3}
		\item $\mathrm{Im}( q^* M^{-1} D'(\gamma_0) v ) \ne 0$, where $D'(\gamma_0)$ is the derivative of $D(\gamma)$ at $\gamma=\gamma_0$, $\ell_0=(p,q)\in\mathbb{C}^{n+n}$ is a left eigenvector of $J(x_0, \gamma_0)$ corresponding to eigenvalue $\IU  \omega_0$, and $\ell_0$ is normalized so that $\ell_0^*r_0 = 1$ where $r_0=(v,\IU \omega_0 v)$.
		\label{coro: hopf conditions_transvers} 
		%
		%
		\item $\det({P(\kappa)})\ne0$ for all $\kappa\in\mathbb{Z} \setminus \{-1,1\}$, where $P(\kappa)$ is the quadratic matrix pencil given by $P(\kappa):= \nabla f(x_0)-\kappa^2\omega_0^2 M + \IU \kappa\omega_0 D(\gamma_0)$.
		\label{coro: hopf conditions_4}
        %
		%
	\end{enumerate}
	Then, there exists smooth functions $\gamma=\gamma(\varepsilon)$ and $T=T(\varepsilon)$ depending on a parameter $\varepsilon$ with $\gamma(0)=\gamma_0$ and $T(0)=2\pi |\omega_0|^{-1}$ such that there are nonconstant periodic solutions of \eqref{eq: parameter-dependent second order system} with period $T(\varepsilon)$ which collapses into the equilibrium point $(x_0,0)$ as $\varepsilon\to 0$.
\end{theorem}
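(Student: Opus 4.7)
The plan is to reduce the statement to a direct application of the classical Hopf bifurcation theorem (in the form used, for instance, in \cite{2004-kuznetsov-hopf}), by verifying in turn each of its four hypotheses: (a) the first-order linearization has a pair of simple, purely imaginary eigenvalues $\pm \IU\omega_0$ at $\gamma=\gamma_0$; (b) no other eigenvalues lie at integer multiples of $\IU\omega_0$ (nonresonance); (c) the pair crosses the imaginary axis with nonzero speed as $\gamma$ varies through $\gamma_0$ (transversality); and (d) the equilibrium persists smoothly in $\gamma$ near $\gamma_0$.

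First I would use assumption (i) together with \Cref{thrm: nec and suf for pure imaginary lossless} to conclude that $(x_0,0)$ fails to be hyperbolic at $\gamma=\gamma_0$; in fact, setting $\omega_0=\sqrt{\lambda}$, the argument inside the sufficiency part of that theorem produces $\pm\IU\omega_0\in\sigma(J(x_0,\gamma_0))$ with associated right eigenvector $r_0=(v,\IU\omega_0 v)$. Simplicity is then provided directly by (ii). For the nonresonance condition, \Cref{lemma: relation between ev J and ev J11} shows that $\IU\kappa\omega_0\in\sigma(J(x_0,\gamma_0))$ if and only if the pencil $P(\kappa)=\nabla f(x_0)-\kappa^2\omega_0^2M+\IU\kappa\omega_0 D(\gamma_0)$ is singular, so (iv) rules out all resonant integer multiples other than $\kappa=\pm1$. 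Persistence of the equilibrium $(x_0,0)$ under variations of $\gamma$ is immediate since $\gamma$ enters only through $D(\gamma)$, leaving the equation $f(x_0)=0$ unchanged.

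The main technical step is the translation of condition (iii) into the standard transversality condition. The bifurcation parameter enters the Jacobian only through the block $-M^{-1}D(\gamma)$, so
\[
\partial_\gamma J(x_0,\gamma)\big|_{\gamma_0}
=\begin{bmatrix} 0 & 0 \\ 0 & -M^{-1}D'(\gamma_0)\end{bmatrix}.
\]
By the standard first-order eigenvalue perturbation formula for a simple eigenvalue, if $\mu(\gamma)$ denotes the smooth continuation of $\IU\omega_0$ with normalized left/right eigenvectors $\ell_0=(p,q)$ and $r_0=(v,\IU\omega_0 v)$, $\ell_0^* r_0=1$, then
\[
\mu'(\gamma_0)=\ell_0^*\,\partial_\gamma J(x_0,\gamma_0)\,r_0
=-\IU\omega_0\, q^* M^{-1}D'(\gamma_0)\,v.
\]
Taking real parts gives $\mathrm{Re}\,\mu'(\gamma_0)=\omega_0\,\mathrm{Im}\!\left(q^*M^{-1}D'(\gamma_0)v\right)$, which is nonzero precisely by (iii); since $\omega_0>0$, the pair $\mu(\gamma),\overline{\mu(\gamma)}$ crosses the imaginary axis transversally at $\gamma_0$.

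With all four hypotheses verified, the classical Hopf bifurcation theorem furnishes the asserted smooth families $\gamma(\varepsilon)$, $T(\varepsilon)$ with $\gamma(0)=\gamma_0$, $T(0)=2\pi/|\omega_0|$, and a one-parameter family of nonconstant periodic orbits of \eqref{eq: parameter-dependent second order system} collapsing onto $(x_0,0)$ as $\varepsilon\to0$. The step I expect to be the main obstacle, and where care is needed, is the transversality computation: it relies on the fact that $\partial_\gamma J$ vanishes on the upper $n$ rows so that the left eigenvector component $p$ plays no role and only $q^*M^{-1}D'(\gamma_0)v$ survives, which is exactly the quantity appearing in (iii).
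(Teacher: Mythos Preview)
Your proposal is correct and follows essentially the same approach as the paper: both invoke \Cref{thrm: nec and suf for pure imaginary lossless} to produce the purely imaginary pair $\pm\IU\omega_0$ with right eigenvector $(v,\IU\omega_0 v)$, compute $\partial_\gamma J(x_0,\gamma_0)$ and apply the first-order eigenvalue perturbation formula to obtain $\mu'(\gamma_0)=-\IU\omega_0\,q^*M^{-1}D'(\gamma_0)v$, and then invoke the classical Hopf theorem. Your treatment is slightly more explicit about the role of condition~(iv) as a nonresonance hypothesis and about the trivial persistence of the equilibrium, but otherwise the arguments coincide.
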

\begin{proof}
	By \cref{thrm: nec and suf for pure imaginary lossless}, condition (\ref{coro: hopf conditions_1}) implies that the Jacobian matrix \eqref{eq: J general case hopd thrm proof} at $(x,\gamma) = (x_0,\gamma_0)$ possesses a pair of purely imaginary eigenvalues $\pm \IU  \omega_0$, where $\omega_0=\sqrt{\lambda}$. Moreover, a right eigenvector of $\IU  \omega_0$ is $(v, \IU  \omega_0 v)$, where $v$ is from \eqref{eq: hopf observ}. 
	According to condition (\ref{coro: hopf conditions_3}), the eigenvalue $\IU \omega_0$ is simple. Therefore, according to the eigenvalue perturbation theorem \cite[Theorem 1]{2020-greenbaum-perturbation}, for $\gamma$ in a neighborhood of $\gamma_0$, the matrix $J(x_0,\gamma)$ has an eigenvalue $\xi(\gamma)$ and corresponding right and left eigenvectors $r(\gamma)$ and $\ell(\gamma)$ with $\ell(\gamma)^*r(\gamma)=1$ such that $\xi(\gamma)$, $r(\gamma)$, and $\ell(\gamma)$ are all analytic functions of $\gamma$, satisfying $\xi(\gamma_0)=\IU \omega_0$, $r(\gamma_0)=r_0$, and $\ell(\gamma_0)=\ell_0$. Let us differentiate the equation $J(x_0,\gamma) r(\gamma) = \xi(\gamma) r(\gamma)$ and set $\gamma=\gamma_0$, to get
	\begin{align}
	    J'(x_0,\gamma_0) r(\gamma_0) + J(x_0,\gamma_0) r'(\gamma_0) = \xi'(\gamma_0) r(\gamma_0) + \xi(\gamma_0) r'(\gamma_0).
	\end{align}
	After left multiplication by $\ell_0^*$, and using $\ell_0^*r_0=1$, we obtain the derivative of $\xi(\gamma)$ at $\gamma = \gamma_0$:
	\begin{align*}
	\xi'(\gamma_0) =  
	\begin{bmatrix}
	 p^{*} & q^{*}
	\end{bmatrix}
	\begin{bmatrix}
	0 & 0 \\
	0  & - M^{-1} D'(\gamma_0) \\
	\end{bmatrix} 
	\begin{bmatrix}
	v \\ \IU \omega_0 v
	\end{bmatrix} = - \IU \omega_0 q^{*} M^{-1} D'(\gamma_0) v.
	\end{align*}
	%
	%
    %
    %
    %
	Now, $\mathrm{Im}( q^* M^{-1} D'(\gamma_0) v ) \ne 0$ in condition (\ref{coro: hopf conditions_transvers}) implies that $\mathrm{Re}(\xi'(\gamma_0)) \ne 0$ which is a necessary condition for Hopf bifurcation. 
    %
    %
	Therefore, the results follow from the Hopf bifurcation theorem \cite[Section 2]{1978-Schmidt-hopf}. Note that $J(x_0,\gamma)$ is singular if and only if $\nabla f(x_0)$ is singular. Thus, nonsingularity of $\nabla f(x_0)$ is necessary for Hopf bifurcation.
\end{proof}

If one or more of the listed conditions in \cref{coro: Hopf bifurcation} are not satisfied, we may still have the birth of a periodic orbit but some of the conclusions of the theorem may not hold true. The bifurcation is then called a degenerate Hopf bifurcation. For instance, if the transversality condition (\ref{coro: hopf conditions_transvers}) is not satisfied, the stability of the equilibrium point may not change, or multiple periodic orbits may bifurcate \cite{1978-Schmidt-hopf}. The next theorem describes a safe region for damping variations such that fold and Hopf bifurcations will be avoided.

\begin{theorem}\label{thm: fold and Hopf bifurcation}
 Consider the parametric ODE \eqref{eq: parameter-dependent second order system}, with a nonsingular inertia matrix $M\in\mathbb{R}^{n\times n}$. Suppose the damping matrix $D(\gamma)\in\mathbb{R}^{n\times n}$ is a smooth function of $\gamma$, $(x_0,0)\in \mathbb{R}^{n+n}$ is a hyperbolic equilibrium point of the corresponding first-order system at $\gamma=\gamma_0$, and $L:=\nabla f(x_0)\in\mathbb{R}^{n\times n}$. Then, the following statements hold:
 \begin{enumerate}[(i)]
     \item Variation of $\gamma$ in $\mathbb{R}$ will not lead to any fold bifurcation.
     %
     %
     \item Under the symmetric setting, i.e., when $M\in\mathbb{S}^n_{++}$, $D(\gamma) \in\mathbb{S}^n_+$, and $L\in \mathbb{S}^n_{++}$,  variation of $\gamma$ in $\mathbb{R}$ will not lead to any Hopf bifurcation, as long as $D(\gamma) \succeq D(\gamma_0)$. If in addition the equilibrium point is stable, variation of $\gamma$ in $\mathbb{R}$ will not make it unstable, as long as $D(\gamma) \succeq D(\gamma_0)$.  
     %
 \end{enumerate}
  %
\end{theorem}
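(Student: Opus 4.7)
The plan is to deduce both statements directly from the machinery already assembled in the paper: \cref{lemma: relation between ev J and ev J11} (the quadratic pencil characterization of $\sigma(J)$), \cref{thrm: monotonic damping behaviour} (monotonicity of imaginary-axis eigenvalues under PSD perturbations of $D$), and \cref{thrm: nec and suf for pure imaginary lossless} (the observability characterization of hyperbolicity). No new machinery is required; the argument packages these tools together with continuous dependence of the spectrum on $\gamma$.

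For part (i), I would reason that a fold bifurcation at $(x_0,0)$ would require $0\in\sigma(J(x_0,\gamma))$ for some $\gamma\in\R$. By \cref{lemma: relation between ev J and ev J11} applied with $\lambda=0$, this is equivalent to $P(0)=L=\nabla f(x_0)$ being singular. Since $L$ does not depend on $\gamma$, and since the assumed hyperbolicity at $\gamma_0$ already forces $0\notin\sigma(J(x_0,\gamma_0))$, i.e.\ $L$ nonsingular (again by \cref{lemma: relation between ev J and ev J11}), the value $P(0)=L$ remains nonsingular for every $\gamma$. Therefore $0$ never enters $\sigma(J(x_0,\gamma))$ and no fold bifurcation is possible. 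Note that this argument does not use the symmetry assumptions on $M$, $D$, and $L$, which matches the generality of statement (i).

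For the first assertion of (ii), the symmetric hyperbolicity at $\gamma_0$ means that $J(x_0,\gamma_0)$ has no purely imaginary eigenvalues; in the notation of \cref{thrm: monotonic damping behaviour} with $D_I = D(\gamma_0)$, this says $\mathcal{C}_I=\emptyset$. Applying that theorem with $D_{II}=D(\gamma)$ and using the hypothesis $D(\gamma)\succeq D(\gamma_0)$ yields $\mathcal{C}_{II}\subseteq\mathcal{C}_I=\emptyset$. Consequently $J(x_0,\gamma)$ has no purely imaginary eigenvalues for any such $\gamma$, and hence $(x_0,0)$ remains hyperbolic; in particular, no Hopf bifurcation can occur. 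For the stability assertion, I would combine this with continuity of eigenvalues in $\gamma$ (valid because $D(\gamma)$ is smooth): on any continuous path in the set $\{\gamma\in\R : D(\gamma)\succeq D(\gamma_0)\}$ emanating from $\gamma_0$, the eigenvalues of $J(x_0,\gamma)$ depend continuously on $\gamma$ and, by the preceding step, never touch the imaginary axis. Hence the count of eigenvalues of $J(x_0,\gamma)$ lying in $\C_-$ is constant along such a path; since it equals $2n$ at $\gamma_0$, the equilibrium stays asymptotically stable.

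The step I expect to require the most care is the last one: carefully separating the pointwise conclusion (no imaginary eigenvalue at any $\gamma$ with $D(\gamma)\succeq D(\gamma_0)$) from the continuity-based conclusion (no eigenvalue actually migrates from $\C_-$ to $\C_+$ along a continuous deformation), since only the latter delivers stability preservation. Aside from this path-connectedness subtlety, no further obstacle arises: both parts are essentially direct invocations of \cref{lemma: relation between ev J and ev J11} and \cref{thrm: monotonic damping behaviour}.
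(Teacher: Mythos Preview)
Your proposal is correct and follows essentially the same route as the paper: part (i) via \cref{lemma: relation between ev J and ev J11} at $\lambda=0$ (so that $P(0)=L$ is independent of $\gamma$), and part (ii) via \cref{thrm: monotonic damping behaviour} applied with $D_I=D(\gamma_0)$ and $D_{II}=D(\gamma)$. Your write-up is in fact more detailed than the paper's terse proof, particularly in spelling out the continuity argument for the stability assertion; the path-connectedness subtlety you flag can be sidestepped by interpolating linearly in $D$-space (taking $D_t=(1-t)D(\gamma_0)+tD(\gamma)$, which satisfies $D_t\succeq D(\gamma_0)$ for all $t\in[0,1]$) rather than in $\gamma$-space.
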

\begin{proof}
    According to \cref{lemma: relation between ev J and ev J11}, zero is an eigenvalue of $J$ if and only if the matrix $L=\nabla f(x_0)$ is singular. Therefore, the damping matrix $D$ has no role in the zero eigenvalue of $J$. The second statement follows from \Cref{thrm: monotonic damping behaviour}.
\end{proof}
The above theorem can be straightforwardly generalized to bifurcations having higher codimension.

\section{Power System Models and Impact of Damping}
\label{Sec: Power System Model and Its Properties}
The Questions \eqref{Q1}-\eqref{Q3} asked in \Cref{subsec: Problem Statement} and the  theorems and results discussed in the previous parts of this paper arise naturally from the foundations of electric power systems. These results are useful tools for analyzing the behaviour and maintaining the stability of power systems. In the rest of this paper, we focus on power systems to further explore the role of damping in these systems.
\subsection{Power System Model} \label{subsec: Multi-Machine Swing Equations}
Consider a power system with the set of interconnected generators $\mathcal{N}=\{1,\cdots,n\}, n\in\mathbb{N}$. Based on the classical small-signal stability assumptions \cite{2008-anderson-stability}, the mathematical model for a power system is described by the following second-order system:
\begin{align} \label{eq: 2nd order swing}
		\frac{m_j}{\omega_s} \ddot{\delta}_j(t)+ \frac{d_j}{\omega_s} {\dot{\delta}}_j(t) = P_{m_j} - P_{e_j}(\delta(t)) && \forall j \in \mathcal{N}.
\end{align}
Considering the state space $\mathcal{S}:= \{    (\delta,\omega): \delta \in \mathbb{R}^n,  \omega \in \mathbb{R}^n \}$, the dynamical system \eqref{eq: 2nd order swing} can be represented as a system of first-order nonlinear autonomous ODEs, aka swing equations:  
\begin{subequations} \label{eq: swing equations}
	\begin{align}
		& \dot{\delta}_j(t) = \omega_j(t) && \forall j \in \mathcal{N},  \label{eq: swing equations a}\\
		&  \frac{m_j}{\omega_s} \dot{\omega}_j(t)+ \frac{d_j}{\omega_s} \omega_j(t) = P_{m_j} - P_{e_j}(\delta(t)) && \forall j \in \mathcal{N}, \label{eq: swing equations b}
	\end{align}
\end{subequations}
where for each generator $j\in\mathcal{N}$, $P_{m_j}$ and $P_{e_j}$ are mechanical and electrical powers in per unit, $m_j$ is the inertia constant in seconds, $d_j$ is the unitless damping coefficient, $\omega_{s}$ is the synchronous angular velocity in electrical radians per seconds, $t$ is the time in seconds, $\delta_j(t)$ is the rotor electrical angle in radians, and finally $\omega_j(t)$ is the deviation of the rotor angular velocity from the synchronous velocity in electrical radians per seconds. For the sake of simplicity, henceforth we do not explicitly write the dependence of the state variables $\delta$ and $\omega$ on time $t$. 
The electrical power $P_{e_j}$ in \eqref{eq: swing equations b} can be further spelled out: 
\begin{align} \label{eq: flow function}
	P_{e_j}(\delta) & = \sum \limits_{k = 1}^n { V_j  V_k Y_{jk} \cos \left( \theta _{jk} - \delta _j + \delta _k \right)}
\end{align}
where $V_j$ is the terminal voltage magnitude of generator $j$, and
$Y_{jk}\exp{({\IU \theta_{jk}})}$ is the $(j,k)$ entry of the reduced admittance matrix, with $Y_{jk}\in\mathbb{R}$ and $\theta_{jk}\in\mathbb{R}$. The reduced admittance matrix encodes the underlying graph structure of the power grid, which is assumed to be a connected graph in this paper.
Note that for each generator $j\in\mathcal{N}$, the electrical power $P_{e_j}$ in general is a function of angle variables $\delta_k$ for all $k\in\mathcal{N}$. Therefore, the dynamics of generators are interconnected through the function $P_{e_j}(\delta)$ in \eqref{eq: 2nd order swing} and \eqref{eq: swing equations}.
\begin{definition}[flow function] \label{def: flow function}
	The smooth function $P_e:\mathbb{R}^n \to \mathbb{R}^n$ given by $\delta \mapsto P_e(\delta)$ in \eqref{eq: flow function} is called the flow function.
\end{definition}
The smoothness of the flow function (it is $\mathcal{C}^\infty$ indeed) is a sufficient condition for the existence and uniqueness of the solution to the ODE \eqref{eq: swing equations}. 
The flow function is translationally invariant with respect to the operator $\delta \mapsto \delta + \alpha \mathbf{1}$, where $\alpha \in \mathbb{R}$ and $\mathbf{1}\in\mathbb{R}^n$ is the vector of all ones. In other words, $P_e(\delta + \alpha \mathbf{1})=P_e(\delta)$. A common way to deal with this situation is to define a reference bus and refer all other bus angles to it. This is equivalent to projecting the original state space $\mathcal{S}$ onto a lower dimensional space. We will delve into this issue in \Cref{subsec: Referenced Model}.
%
\subsection{Jacobian of Swing Equations} \label{SubSec: Graph Theory Interpretation and Spectral Properties}
Let us take the state variable vector $(\delta ,\omega)\in\mathbb{R}^{2n}$ into account and note that the Jacobian of the vector field in \eqref{eq: swing equations} has the form \eqref{eq: J general case}
where $M= \frac{1}{\omega_s}  \mathbf{diag}(m_1,\cdots,m_n)$ and $D=\frac{1}{\omega_s}\mathbf{diag}(d_1,\cdots,d_n)$. Moreover, $f=P_e-P_m$ and $\nabla f =  \nabla P_e (\delta) \in\mathbb{R}^{n\times n}$ is the Jacobian of the flow function with the entries:
\begin{align*} 
	&  \frac {\partial P_{e_j}} {\partial \delta_j} = \sum \limits_{k \ne j} { V_j V_k Y_{jk} \sin \left( {\theta _{jk} - {\delta _j} + {\delta _k}} \right) }, \forall j \in \mathcal{N}\\
	&  \frac{\partial P_{e_j} } {\partial \delta _k} =  - {V_j} {V_k}{Y_{jk}}\sin \left( {{\theta _{jk}} - {\delta _j} + {\delta _k}} \right),\forall j,k \in \mathcal{N}, k \neq j.
\end{align*}
%
Let $\mathcal{L}$ be the set of transmission lines of the reduced power system. We can rewrite $ {\partial P_{e_j}}/ {\partial \delta_j}=\sum_{k=1, k\ne j}^n w_{jk}$ and ${\partial P_{e_j} }/ {\partial \delta _k}=-w_{jk}$, where 
\begin{align} \label{eq: weights of digraph lossy}
w_{jk} = 
\begin{cases}
 {V_j} {V_k}{Y_{jk}}\sin \left( \varphi_{jk} \right)  & \forall \{ j,k \} \in \mathcal{L} \\
 0 & \text{otherwise},
\end{cases}
\end{align}
and $\varphi_{jk} = {{\theta _{jk}} - {\delta _j} + {\delta _k}}$. Typically, 
we have $\varphi_{jk} \in ( 0,\pi )$ for all $\{j,k \} \in \mathcal{L}$ \cite{2020-fast-certificate}. Thus, it is reasonable to assume that the equilibrium points $(\delta^{0},\omega^{0})$ of the dynamical system \eqref{eq: swing equations} are located in the set $\Omega$ defined as
\begin{align*}
\Omega = \left\{ (\delta,\omega)\in\mathbb{R}^{2n} :   0 < \theta_{jk}-\delta_j+\delta_k < \pi , \forall \{j,k\} \in \mathcal{L},\omega = 0  \right\}.
\end{align*}
Under this assumption, the terms $w_{jk} > 0$ for all transmission lines $\{j,k\} \in \mathcal{L}$.
Consequently, $ {\partial P_{e_j}} / {\partial \delta_j}\ge0, \forall j \in \mathcal{N}$ and ${\partial P_{e_j} }/ {\partial \delta _k} \le 0, \forall j,k \in \mathcal{N}, k \neq j$. Moreover, $\nabla P_e (\delta)$ has a zero row sum, i.e., $ \nabla P_e (\delta)\mathbf{1} = 0 \implies 0 \in \sigma(\nabla P_e (\delta))$. Given these properties, $\nabla P_e (\delta^0)$ turns out to be a singular M-matrix for all $(\delta^{0},\omega^{0})\in\Omega$ \cite{2020-fast-certificate}. Recall that a matrix $A$ is an \emph{M-matrix} if the off-diagonal elements of $A$ are nonpositive and the nonzero eigenvalues of $A$ have positive real parts \cite{1974-M-matrices}. Finally, if the power system under study has a connected underlying undirected graph, the zero eigenvalue of $\nabla P_e (\delta^0)$ will be simple \cite{2020-fast-certificate}. 
%

    In general, the Jacobian $\nabla P_e (\delta)$ is not symmetric. When the power system is \emph{lossless}, i.e., when the transfer conductances of the grid are zero, then $\theta_{jj}=-\frac{\pi}{2}, \forall j \in \mathcal{N}$ and $\theta_{jk}=\frac{\pi}{2}, \forall \{j,k\} \in \mathcal{L}$. In a lossless system, $\nabla P_e (\delta)$ is symmetric. If in addition an equilibrium point $(\delta^0,\omega^0)$ belongs to the set $\Omega$, then $\nabla P_e (\delta^0) \in\mathbb{S}^n_+$, because $\nabla P_e (\delta^0)$ is real symmetric and diagonally dominant \cite{2021-gholami-sun-MMG-stability}. 

\subsection{Referenced Power System Model}
\label{subsec: Referenced Model}

The translational invariance of the flow function $P_e$ gives rise to a zero eigenvalue in the spectrum of $\nabla P_e (\delta)$, and as a consequence, in the spectrum of $J(\delta)$. This zero eigenvalue and the corresponding nullspace pose some difficulties in monitoring the hyperbolicity of the equilibrium points, specially during Hopf bifurcation analysis. As mentioned in \Cref{subsec: Multi-Machine Swing Equations}, this situation can be dealt with by defining a reference bus and referring all other bus angles to it. Although this is a common practice in power system context \cite{2008-anderson-stability}, the spectral and dynamical relationships between the original system and the referenced system are not rigorously analyzed in the literature. In this section, we fill this gap to facilitate our analysis in the later parts.

\subsubsection{Referenced Model}
Define $\psi_j:=\delta_j-\delta_n, \forall j \in \{1,2,...,n-1\}$ and reformulate the swing equation model \eqref{eq: swing equations} into the \emph{referenced model}
\begin{subequations} \label{eq: Swing Equation Polar referenced}
	\begin{align}
	& \dot{\psi}_j = \omega_j - \omega_n \: \qquad \qquad \qquad \qquad \forall j \in \{1,...,n-1\},  \\
	& \dot{\omega}_j = - \frac{d_j}{m_j} \omega_j + \frac{\omega_s}{m_j}  (P_{m_j} - P_{e_j}^r(\psi)) \: \: \: \: \forall j \in \{1,...,n\},
	\end{align}
\end{subequations}
where for all $j$ in $\{1,...,n\}$ we have
	\begin{align} \label{eq: flow function referenced compact}
	P_{e_j}^r(\psi) & = \sum \limits_{k = 1}^{n} { V_j  V_k  Y_{jk} \cos \left( \theta _{jk} - \psi_j + \psi_k \right)},
	\end{align}
and $\psi_n\equiv0$. The function $P_e^r: \mathbb{R}^{n-1} \to \mathbb{R}^n$ given by \eqref{eq: flow function referenced compact} is called the referenced flow function. 
\subsubsection{The Relationship}
We would like to compare the behaviour of the two dynamical systems \eqref{eq: swing equations} and \eqref{eq: Swing Equation Polar referenced}. Let us define the linear mapping $\Psi:\mathbb{R}^n\times\mathbb{R}^n \to \mathbb{R}^{n-1} \times\mathbb{R}^n$ given by $(\delta,\omega)\mapsto(\psi,\omega)$ such that
\begin{align*}
   \Psi(\delta,\omega) = \left  \{ (\psi,\omega) : \psi_j:=\delta_j-\delta_n, \forall j \in \{1,2,...,n-1\}  \right  \}. 
\end{align*}
This map is obviously smooth but not injective.
It can also be written in matrix form 
\begin{align*}
       \Psi(\delta,\omega) = \begin{bmatrix} T_1 & 0 \\ 0 & I_n \end{bmatrix}
       \begin{bmatrix} \delta \\ \omega \end{bmatrix}
\end{align*}
where $I_n\in\mathbb{R}^{n\times n}$ is the identity matrix, $\mathbf{1}\in\mathbb{R}^{n-1}$ is the vector of all ones, and
\begin{align}
T_1:=\begin{bmatrix}
I_{n-1}& -\mathbf{1}
\end{bmatrix} \in \mathbb{R}^{(n-1)\times n}.
\end{align}


The next proposition, which is proved in \cref{proof of thrm: original model vs referenced model}, establishes the relationship between the original model \eqref{eq: swing equations} and the referenced model \eqref{eq: Swing Equation Polar referenced}.
\begin{proposition} \label{thrm: original model vs referenced model}
Let $(\delta^0,\omega^0)$ be an equilibrium point of the swing equation \eqref{eq: swing equations} and $(n_-,n_0,n_+)$ be the inertia\footnote{Inertia of a matrix (see e.g. \cite{2013-Horn-matrix-analysis} for a definition) should not be confused with the inertia matrix $M$.} of its Jacobian at this equilibrium point. The following two statements hold:
\begin{enumerate}[(i)]
    \item $\Psi(\delta^0,\omega^0)$ is an equilibrium point of the referenced model \eqref{eq: Swing Equation Polar referenced}.
    \item $(n_-,n_0-1,n_+)$ is the inertia of the Jacobian of \eqref{eq: Swing Equation Polar referenced} at $\Psi(\delta^0,\omega^0)$.
\end{enumerate}
\end{proposition}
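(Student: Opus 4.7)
My plan is to handle part (i) by direct substitution and part (ii) by establishing a clean characteristic-polynomial identity between the original and referenced Jacobians.

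For (i), I would unpack the equilibrium condition: $(\delta^0,\omega^0)$ being an equilibrium of \eqref{eq: swing equations} forces $\omega^0=0$ and $P_{m_j}=P_{e_j}(\delta^0)$ for every $j\in\mathcal{N}$. Setting $\psi^0:=T_1\delta^0$ and noting that $(\psi^0,0)\in\R^n$ equals $\delta^0-\delta_n^0\mathbf{1}$, the translational invariance of the flow function gives $P_{e_j}^r(\psi^0)=P_{e_j}(\delta^0)=P_{m_j}$. Hence both halves of the referenced vector field \eqref{eq: Swing Equation Polar referenced} vanish at $\Psi(\delta^0,\omega^0)=(\psi^0,0)$.

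The substance is (ii). The first step is to relate the two ``inner'' Jacobians: a direct chain-rule computation combined with the zero-row-sum property $\nabla P_e(\delta^0)\mathbf{1}=0$ (the infinitesimal form of translational invariance) should yield the factorization
\[
\nabla P_e(\delta^0) \;=\; \nabla P_e^r(\psi^0)\, T_1.
\]
Consequently the two Jacobians share the common lower-left factor $-M^{-1}\nabla P_e^r(\psi^0)$ and differ only in an $I_n$ versus $T_1$ in the upper-right block. I would then compute each characteristic polynomial by a Schur-complement reduction on the (invertible for $\lambda\neq0$) upper-left block, collapsing both determinants to the same quadratic pencil $\det(\lambda^2 M+\lambda D+\nabla P_e(\delta^0))$ up to a factor of $\det(M)$ and appropriate powers of $\lambda$. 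The prefactors $\lambda^n$ coming from $J$ and $\lambda^{n-1}$ coming from $J^r$ should combine to give the polynomial identity
\[
\det(\lambda I_{2n}-J) \;=\; \lambda\,\det(\lambda I_{2n-1}-J^r).
\]
This identity says that the multiset of eigenvalues of $J$ is exactly that of $J^r$ together with one additional zero eigenvalue; since zero lies on the imaginary axis and $n_0\ge1$ (guaranteed by $\nabla P_e(\delta^0)\mathbf{1}=0$), the inertia identity $(n_-,n_0-1,n_+)$ for $J^r$ follows immediately.

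The main obstacle I anticipate is bookkeeping: tracking the precise powers of $\lambda$ through the Schur complements so the outcome is a genuine polynomial identity valid at $\lambda=0$ (which is exactly where the ``missing'' eigenvalue sits), and rigorously passing from the spectrum identity to the inertia identity. The factorization $\nabla P_e(\delta^0)=\nabla P_e^r(\psi^0)T_1$ is the pivotal structural fact that makes the two Schur complements collapse to the same pencil; without the zero-row-sum property derived from translational invariance, this entire line of argument would not go through.
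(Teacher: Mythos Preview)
Your proposal is correct and follows the same underlying mechanism as the paper: both arguments reduce the two Jacobians to the common quadratic pencil $\lambda^2 M+\lambda D+\nabla P_e(\delta^0)$ via a Schur-complement computation, and both hinge on the zero-row-sum identity, which the paper writes as $\nabla P_e(\delta^0)T_2T_1=\nabla P_e(\delta^0)$ (equivalent to your $\nabla P_e(\delta^0)=\nabla P_e^r(\psi^0)T_1$, since the paper records $\nabla P_e^r(\psi^0)=\nabla P_e(\delta^0)T_2$).

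The packaging differs slightly. The paper proves an auxiliary lemma stating that, for $\lambda\neq0$, $\lambda\in\sigma(J^r)$ iff the pencil $P(\lambda)$ is singular, combines this with \cref{lemma: relation between ev J and ev J11} to conclude that $J$ and $J^r$ share the same nonzero eigenvalues, and then appeals to the dimension drop $2n\to 2n-1$. Your route instead derives the exact characteristic-polynomial identity $\det(\lambda I_{2n}-J)=\lambda\,\det(\lambda I_{2n-1}-J^r)$ directly. This is a modest sharpening: the polynomial identity automatically matches algebraic multiplicities of all nonzero eigenvalues and pins down the extra eigenvalue at $0$, whereas the paper's set-level statement plus dimension count leaves the multiplicity bookkeeping implicit. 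Either way, the structural fact doing all the work is the same factorization coming from translational invariance.
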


\begin{remark}
    Note that the equilibrium points of the referenced model \eqref{eq: Swing Equation Polar referenced} are in the set 
    \begin{align*}
        \Tilde{\mathcal{E}} =  \biggl\{(\psi,\omega)\in \mathbb{R}^{n-1}\times\mathbb{R}^n : \; & \omega_j = \omega_n \:, \forall j \in \{1,...,n-1\}, \\ & P_{m_j} = P_{e_j}^r(\psi) + d_j \omega_n/\omega_s , \: \forall j \in \{1,...,n\}   \biggr\}
    \end{align*}
    where $\omega_n$ is not necessarily zero. Therefore, the referenced model \eqref{eq: Swing Equation Polar referenced} may have extra equilibrium points which do not correspond to any equilibrium point of the original model \eqref{eq: swing equations}.
 \end{remark}

\subsection{Impact of Damping in Power Systems}
\label{Sec: Impact of Damping in Power Systems}
The theoretical results in \Cref{Sec: Monotonic Behavior of Damping} and \Cref{Sec: Impact of Damping in Hopf Bifurcation} have important applications in electric power systems. For example, \cref{thrm: monotonic damping behaviour} is directly applicable to lossless power systems, and provides new insights to improve the situational awareness of power system operators. Recall that many control actions taken by power system operators are directly or indirectly targeted at changing the effective damping of the system \cite{1994-kundur-stability,2019-Patrick-koorehdavoudi-input,2012-aminifar-wide-area-damping}. In this context, \cref{thrm: monotonic damping behaviour} determines how the system operator should change the damping of the system in order to avoid breaking the hyperbolicity and escaping dangerous bifurcations. 
    Now, consider the case where a subset of power system generators have zero damping coefficients. Such partial damping is possible in practice specially in inverter-based resources (as damping coefficient corresponds to a controller parameter which can take zero value). The next theorem and remark follow from \cref{thrm: nec and suf for pure imaginary lossless}, and show how partial damping could break the hyperbolicity in lossless power systems.
    %
    %

\begin{theorem}[purely imaginary eigenvalues in lossless power systems] \label{prop: nec and suf for pure imaginary lossless power system}
    Consider a lossless network \eqref{eq: swing equations} with an equilibrium point $(\delta^{0},\omega^{0})\in \Omega$. Suppose all generators have positive inertia and nonnegative damping coefficients. Then, $\sigma(J(\delta^{0}))$ contains a pair of purely imaginary eigenvalues if and only if the pair $(M^{-1}\nabla P_e (\delta^0), \allowbreak M^{-1}D)$ is not observable.
\end{theorem}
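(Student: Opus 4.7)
The plan is to adapt the proof of \cref{thrm: nec and suf for pure imaginary lossless} to the power system setting. The key distinction is that $\nabla P_e(\delta^0)\in\S^n_+$ is only positive semidefinite in the lossless case (not positive definite, as assumed in \cref{thrm: nec and suf for pure imaginary lossless}), because it carries a zero eigenvalue with eigenvector $\mathbf{1}$ due to the translational invariance of $P_e$. This singularity is the main obstacle, and dealing with it cleanly requires separating the cases $D = 0$ and $D \neq 0$.

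First, I would dispose of the trivial case $D = 0$. In that case, $(M^{-1}\nabla P_e(\delta^0), M^{-1}D)$ is trivially not observable. On the other side of the biconditional, \cref{lemma: undamped systems} gives $\mu \in \sigma(J(\delta^0)) \iff \mu^2 \in \sigma(-M^{-1}\nabla P_e(\delta^0))$; since the underlying graph is connected, $\nabla P_e(\delta^0)$ has rank $n-1$ and therefore possesses at least one strictly positive eigenvalue, so $J(\delta^0)$ contains a pair of nonzero purely imaginary eigenvalues. Both sides of the claimed equivalence thus hold. In what follows assume $D \neq 0$; since $D$ is diagonal with nonnegative entries, this guarantees $D \mathbf{1} \neq 0$.

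For necessity, suppose $\pm \IU \beta \in \sigma(J(\delta^0))$ for some $\beta > 0$. By \cref{lemma: relation between ev J and ev J11}, the quadratic pencil $\nabla P_e(\delta^0) - \beta^2 M + \IU \beta D$ is singular, so there exists $v = x + \IU y \neq 0$ (with $x, y \in \R^n$) in its kernel. Separating real and imaginary parts and exploiting the symmetry of $\nabla P_e(\delta^0) - \beta^2 M$ together with $D \succeq 0$, the standard calculation from the proof of \cref{thrm: nec and suf for pure imaginary lossless} yields $x^\top D x = y^\top D y = 0$, hence $D x = D y = 0$. This gives $\nabla P_e(\delta^0) v = \beta^2 M v$ and $D v = 0$, so the pair $(M^{-1}\nabla P_e(\delta^0), M^{-1} D)$ is not observable. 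Notably, this step never invokes positive definiteness of $\nabla P_e(\delta^0)$, only its symmetry.

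For sufficiency, unobservability furnishes $\lambda \in \C$ and $v \neq 0$ with $M^{-1}\nabla P_e(\delta^0) v = \lambda v$ and $D v = 0$. Since $M^{-1}\nabla P_e(\delta^0)$ is similar to the real symmetric PSD matrix $M^{-1/2}\nabla P_e(\delta^0) M^{-1/2}$, the eigenvalue $\lambda$ is real and nonnegative. I would then rule out $\lambda = 0$: if $\lambda = 0$ then $v \in \ker(\nabla P_e(\delta^0))$, which under the connected-graph assumption recalled in \Cref{SubSec: Graph Theory Interpretation and Spectral Properties} is one-dimensional and spanned by $\mathbf{1}$, so $v \propto \mathbf{1}$; but then $Dv = 0$ would force $D \mathbf{1} = 0$, contradicting $D \neq 0$. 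Hence $\lambda > 0$, and applying \cref{lemma: relation between ev J and ev J11} to the pencil at $\IU \sqrt{\lambda}$ delivers $\pm \IU \sqrt{\lambda} \in \sigma(J(\delta^0))$, completing the proof.
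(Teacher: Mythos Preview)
Your proposal is correct and follows essentially the same approach as the paper's own proof: the same case split on $D=0$ versus $D\neq 0$, the same symmetry/PSD argument to force $Dx=Dy=0$ in the necessity direction, and the same use of the one-dimensional kernel $\mathrm{span}\{\mathbf{1}\}$ of $\nabla P_e(\delta^0)$ to rule out $\lambda=0$ in the sufficiency direction. The only cosmetic difference is that the paper passes through the transformed matrices $\hat L=M^{-1/2}LM^{-1/2}$ and $\hat D=M^{-1/2}DM^{-1/2}$ in the necessity argument, whereas you work directly with the pencil $\nabla P_e(\delta^0)-\beta^2 M+\IU\beta D$; both routes are valid and yield the same conclusion.
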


\begin{proof}
    As mentioned above, we always assume the physical network connecting the power generators is a connected (undirected) graph. Under this assumption,
    as mentioned in \Cref{SubSec: Graph Theory Interpretation and Spectral Properties}, matrix $L:=\nabla P_e (\delta^0)$ has a simple zero eigenvalue with a right eigenvector $\mathbf{1}\in\mathbb{R}^{n}$, which is the vector of all ones \cite{2020-fast-certificate}.
    Moreover, since the power system is lossless and $(\delta^{0},\omega^{0})\in \Omega$, we have $L\in\mathbb{S}^n_+$.
    If $D=0$, the pair $(M^{-1}L,M^{-1}D)$ can never be observable. Using a similar argument as in the first part in the proof of \cref{thrm: nec and suf for pure imaginary lossless}, it can be shown that $M^{-1}L$ has a simple zero eigenvalue and the rest of its eigenvalues are positive, i.e., $\sigma(M^{-1}L)\subseteq \mathbb{R_{+}}=\{\lambda\in\mathbb{R}:\lambda\ge0\}$. Meanwhile, when $D=0$, we have $\mu\in\sigma(J)\iff\mu^2\in\sigma(-M^{-1}L)$. Notice that a power grid has at least two nodes, i.e. $n\ge2$, and hence, $M^{-1}L$ has at least one positive eigenvalue, i.e., $\exists \lambda\in\R_+, \lambda>0$ such that $\lambda\in\sigma(M^{-1}L)$. Hence, $\mu= \sqrt{-\lambda}$ is a purely imaginary number and is an eigenvalue of $J$. Similarly, we can show that $-\mu$ is an eigenvalue of $J$. Consequently, the theorem holds in the case of $D=0$. In the sequel, we assume that $D\not=0$.
	%

   
	\emph{Necessity:}
	Assume there exists $\beta>0$ such that $\IU \beta \in \sigma(J)$. We will show that the pair $(M^{-1}L,M^{-1}D)$ is not observable.
	%
	By \cref{lemma: relation between ev J and ev J11}, $\IU\beta \in \sigma(J)$ if and only if the matrix pencil $(M^{-1}L + \IU\beta M^{-1}D - \beta^2 I)$ is singular: 
	\begin{align*} 
	& \det \left(  M^{-\frac{1}{2}}(M^{-\frac{1}{2}}LM^{-\frac{1}{2}} + \IU \beta  M^{-\frac{1}{2}}DM^{-\frac{1}{2}} - \beta^2 I) M^{\frac{1}{2}} \right) = 0,
	\end{align*}
	or equivalently, $\exists (x + \IU y) \ne 0$ such that $x,y\in\mathbb{R}^n$ and
	\begin{align} \label{eq: proof of parially damped hopf bifurcation lossless swing}
	\notag & (M^{-\frac{1}{2}}LM^{-\frac{1}{2}} + \IU \beta  M^{-\frac{1}{2}} D M^{-\frac{1}{2}} - \beta^2 I)(x+\IU y) = 0 \\
	& \iff  \begin{cases}
	(M^{-\frac{1}{2}}LM^{-\frac{1}{2}}-\beta^2 I)x - \beta M^{-\frac{1}{2}}DM^{-\frac{1}{2}} y = 0, \\ 
	(M^{-\frac{1}{2}}LM^{-\frac{1}{2}}-\beta^2 I)y + \beta M^{-\frac{1}{2}}DM^{-\frac{1}{2}} x = 0.
	\end{cases}
	\end{align}
	Let $\hat{L}:=M^{-\frac{1}{2}}LM^{-\frac{1}{2}}$, $\hat{D}:=M^{-\frac{1}{2}}DM^{-\frac{1}{2}}$, and observe that
	\begin{align*}
	\begin{cases}
	y^\top (\hat{L}-\beta^2 I)x = y^\top (\beta \hat{D} y) = \beta y^\top \hat{D} y  \ge 0,
	\\
	x^\top (\hat{L}-\beta^2 I)y = x^\top (-\beta \hat{D} x) = -\beta x^\top \hat{D} x  \le 0,
	\end{cases}
	\end{align*}
    %
	where we have used the fact that $\hat{D}$ is *-congruent to $D$. According to Sylvester’s law of inertia, $\hat{D}$ and $D$ have the same inertia. Since $D\succeq 0$, we conclude that $\hat{D}\succeq0$.
	As $(\hat{L}-\beta^2 I)$ is symmetric, we have $x^\top (\hat{L}-\beta^2 I)y = y^\top (\hat{L}-\beta^2 I)x$. Therefore, we must have $x^\top \hat{D} x = y^\top \hat{D} y  =0$. Since $\hat{D}\succeq0$, we can infer that $x\in\ker(\hat{D})$ and $y\in\ker(\hat{D})$.
	%
	Now considering $\hat{D} y = 0$ and using the first equation in \eqref{eq: proof of parially damped hopf bifurcation lossless swing} we get 
	\begin{align}
	( \hat{L}-\beta^2 I)x = 0 \iff M^{-\frac{1}{2}}LM^{-\frac{1}{2}} x=\beta^2 x,
	\end{align}
	multiplying both sides from left by $M^{-\frac{1}{2}}$ we get $M^{-1}L (M^{-\frac{1}{2}} x) = \beta^2 (M^{-\frac{1}{2}} x)$. Thus, $ \hat{x}:= M^{-\frac{1}{2}} x$ is an eigenvector of $M^{-1}L$. Moreover, we have 
	\begin{align*}
	    M^{-1}D \hat{x}  = M^{-1}DM^{-\frac{1}{2}} x = M^{-\frac{1}{2}} (  \hat{D}  x) = 0,    
	\end{align*}
	which means that the pair $(M^{-1}L,M^{-1}D)$ is not observable.

	
	%
	
	\emph{Sufficiency:}
	Suppose the pair~ $\allowbreak (M^{-1}L, M^{-1}D)$ is not observable. We will show that $\sigma(J)$ contains a pair of purely imaginary eigenvalues.
	According to \cref{def: Observability}, $\exists \lambda \in \mathbb{C}, x \in \mathbb{C}^n , x\ne 0$ such that
	\begin{align} \label{eq: observability in lossless general swing}
	 M^{-1}Lx = \lambda x \text{ and } M^{-1}Dx = 0.
	\end{align}
	We make the following two observations.
	Firstly, as it is shown above, we have $\sigma(M^{-1}L)\subseteq \mathbb{R_{+}}$. 
	Secondly, 
	%
    $L$ has a simple zero eigenvalue and a one-dimensional nullspace spanned by $\mathbf{1}\in\R^n$. We want to emphasize that this zero eigenvalue of $L$ cannot break the observability of the pair $(M^{-1}L,M^{-1}D)$. Note that $\ker(L)=\ker(M^{-1}L)$ and $M^{-1}L \mathbf{1} =0$ implies that $M^{-1}D \mathbf{1} \ne0$ because $D\ne0$.
    %
	Based on the foregoing two observations, when the pair $(M^{-1}L,M^{-1}D)$ is not observable, there must exist $\lambda \in \mathbb{R_{+}}, \lambda\ne0$ and $x \in \mathbb{C}^n , x\ne 0$ such that \eqref{eq: observability in lossless general swing} holds.
	Define $\xi=\sqrt{-\lambda}$, which is a purely imaginary number. The quadratic pencil $M^{-1}P(\xi)=\xi^2 I + \xi M^{-1}D + M^{-1}L$ is singular because $M^{-1}P(\xi)x = \xi^2 x + \xi M^{-1}Dx + M^{-1}Lx = -\lambda x + 0 + \lambda x = 0$. By  \cref{lemma: relation between ev J and ev J11}, $\xi$ is an eigenvalue of $J$. Similarly, we can show $-\xi$ is an eigenvalue of $J$. Therefore, $\sigma(J)$ contains the pair of purely imaginary eigenvalues $\pm\xi$.
\end{proof}

The following remark illustrates how \cref{prop: nec and suf for pure imaginary lossless power system} can be used in practice to detect and damp oscillations in power systems.
\begin{remark}
	Consider the assumptions of \cref{prop: nec and suf for pure imaginary lossless power system} and suppose there exists a pair of purely imaginary eigenvalues $\pm \IU  \beta \in \sigma(J(\delta^0))$ which give rise to Hopf bifurcation and oscillatory behaviour of the system. This issue can be detected by observing the osculations in power system state variables (through phasor measurement units (PMUs) \cite{2012-aminifar-wide-area-damping}). According to \cref{prop: nec and suf for pure imaginary lossless power system}, we conclude that $\beta^2\in\sigma(M^{-1}\nabla P_e (\delta^0))$. Let $\mathcal{X}:= \{x^1,...,x^k\}$ be a set of independent eigenvectors associated with the eigenvalue $\beta^2\in\sigma(M^{-1}\nabla P_e (\delta^0))$, i.e., we assume that the corresponding eigenspace is $k$-dimensional. According to \cref{prop: nec and suf for pure imaginary lossless power system}, we have $M^{-1}Dx^\ell=0,\forall x^\ell\in\mathcal{X}$, or equivalently, $Dx^\ell=0,\forall x^\ell\in\mathcal{X}$. Since $D$ is diagonal, we have $d_jx_j^\ell=0,\forall j\in\{1,\cdots,n\}, \forall x^\ell\in\mathcal{X}$.
	In order to remove the purely imaginary eigenvalues, we need to make sure that $\forall x^\ell\in\mathcal{X}, \exists j \in \{1,\cdots,n\}$ such that $d_j x_j^\ell\ne0$. This can be done for each $x^\ell\in\mathcal{X}$ by choosing a $j\in\{1,\cdots,n\}$ such that $x_j^\ell\ne0$ and then increase the corresponding damping $d_j$ from zero to some positive number, thereby rendering the pair $(M^{-1}\nabla P_e (\delta^0),M^{-1}D)$ observable.
\end{remark}

\Cref{prop: nec and suf for pure imaginary lossless power system} gives a necessary and sufficient condition for the existence of purely imaginary eigenvalues in a lossless power system with \emph{nonnegative} damping and positive inertia. It is instructive to compare it with an earlier result in \cite{2021-gholami-sun-MMG-stability}, which shows that when all the generators in a lossless power system have \emph{positive} damping $d_j$ and positive inertia $m_j$, then any equilibrium point in the set $\Omega$ is asymptotically stable.
This is also proved in \Cref{thrm: Stability of Symmetric Second-Order Systems with nonsing damping} of \Cref{appendix: Stability of Symmetric Second-Order Systems with Nonsingular Damping} for the general second-order model \eqref{eq: nonlinear ode}.

Recall that the simple zero eigenvalue of the Jacobian matrix $J(\delta^0)$ in model \eqref{eq: swing equations} stems from the translational invariance of
the flow function defined in \Cref{def: flow function}. As mentioned earlier, we can eliminate this eigenvalue by choosing a reference bus and refer all other bus angles to it. According to \Cref{thrm: original model vs referenced model}, aside from the simple zero eigenvalue, the Jacobians of the original model \eqref{eq: swing equations} and the referenced model \eqref{eq: Swing Equation Polar referenced} have the same number of eigenvalues with zero real part. Hence, \cref{prop: nec and suf for pure imaginary lossless power system} provides a necessary and sufficient condition for breaking the hyperbolicity in the referenced lossless power system model \eqref{eq: Swing Equation Polar referenced}.

%
%
%
%
%

In lossy power systems, matrix $\nabla P_e (\delta)$ may not be symmetric. In this case, \cref{thrm: nec and suf for pure imaginary lossy} can be used for detecting purely imaginary eigenvalues. Meanwhile, let us discuss some noteworthy cases in more detail. \Cref{prop:hyperbolicity n2 n3} asserts that in small lossy power networks  with only one undamped generator, the equilibrium points are always hyperbolic. The proof is provided in \cref{proof of prop:hyperbolicity n2 n3}.
\begin{theorem} \label{prop:hyperbolicity n2 n3}
Let $n\in\{2,3\}$ and consider an $n$-generator system with only one undamped generator. Suppose $(\delta^{0},\omega^{0})\in \Omega$ holds, the underlying undirected power network graph is connected, and $\nabla P_e (\delta)$. Then the Jacobian matrix $J(\delta^0)$ has no purely imaginary eigenvalues.
We allow the network to be lossy, but we assume ${\partial P_{e_j} }/ {\partial \delta _k}=0$ if and only if ${\partial P_{e_k} }/ {\partial \delta _j}=0$. The lossless case is a special case of this.
\end{theorem}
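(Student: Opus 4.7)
The plan is to invoke \Cref{lemma: relation between ev J and ev J11}, so that the claim amounts to showing that $P(\IU\beta) := L - \beta^2 M + \IU\beta D$, with $L := \nabla P_e(\delta^0)$, is nonsingular for every $\beta > 0$. Without loss of generality take generator $1$ to be the undamped one, so $D = \tfrac{1}{\omega_s}\mathrm{diag}(0, d_2, \ldots, d_n)$ with $d_j > 0$ for $j \ge 2$. Because $\IU\beta D$ is diagonal with a zero in position $(1,1)$, multilinearity of the determinant gives
\begin{equation*}
\det P(\IU\beta) = \sum_{S \subseteq \{2,\ldots,n\}} \Bigl(\prod_{j \in S}\tfrac{\IU\beta d_j}{\omega_s}\Bigr)\det A[\{1,\ldots,n\}\setminus S], \qquad A := L - \beta^2 M,
\end{equation*}
and the singularity of $P(\IU\beta)$ is equivalent to the simultaneous vanishing of the real and imaginary parts of this expression.

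For $n = 2$ the expansion is $\det P(\IU\beta) = \det A + \IU(\beta d_2/\omega_s)A_{11}$. Vanishing of the imaginary part forces $A_{11} = L_{11} - \beta^2 m_1/\omega_s = 0$, and substituting into the real part gives $L_{12}L_{21} = w_{12}w_{21} = 0$; the connectedness plus the sparsity-pattern assumption makes $w_{12}, w_{21} > 0$, a contradiction. For $n = 3$ the expansion reads
\begin{equation*}
\det P(\IU\beta) = \bigl[\det A - \tfrac{\beta^2 d_2 d_3}{\omega_s^2}A_{11}\bigr] + \IU\beta \bigl[\tfrac{d_2}{\omega_s}\det A[\{1,3\}] + \tfrac{d_3}{\omega_s}\det A[\{1,2\}]\bigr].
\end{equation*}
One first rules out $A_{11} = 0$: substituting into the imaginary equation yields $d_2 w_{13}w_{31} + d_3 w_{12}w_{21} = 0$, which (by the sparsity-pattern assumption and $d_2, d_3 > 0$) forces both edges incident to node $1$ to vanish, disconnecting node $1$. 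With $A_{11}\ne 0$ and abbreviating $\pi_j := L_{jj} - \beta^2 m_j/\omega_s$, $p_{jk} := w_{jk}w_{kj} \ge 0$, and $q := L_{12}L_{23}L_{31} + L_{13}L_{21}L_{32}$, the two vanishing conditions reduce to
\begin{align*}
\pi_1(d_3\pi_2 + d_2\pi_3) &= d_2 p_{13} + d_3 p_{12}, \\
\pi_1\pi_2\pi_3 - \pi_1 p_{23} - \pi_2 p_{13} - \pi_3 p_{12} + q &= \tfrac{\beta^2 d_2 d_3}{\omega_s^2}\pi_1.
\end{align*}

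I would eliminate $\pi_3$ via the first identity, substitute into the second, and multiply through by $\pi_1$, which after completing the square yields
\begin{equation*}
-\tfrac{d_3}{d_2}(\pi_1\pi_2 - p_{12})^2 - \pi_1^2 p_{23} - p_{12}p_{13} + q\pi_1 = \pi_1^2\tfrac{\beta^2 d_2 d_3}{\omega_s^2}.
\end{equation*}
A case split by the graph topology then delivers the contradiction. In any pendant-path arrangement about node $1$ (only two of the three possible edges present), one of $p_{12}, p_{13}, p_{23}$ and the term $q$ vanish, and an analogous elimination of the surviving variable collapses the identity to a form whose left-hand side is manifestly non-positive while the right-hand side is strictly positive. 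In the triangle case every $p_{jk} > 0$ and $q < 0$; when $\pi_1 > 0$ the term $q\pi_1$ is negative, so the entire left-hand side is strictly negative while the right-hand side is positive. The main obstacle is the residual triangle subcase $\pi_1 < 0$: one then substitutes the linear dependence $\pi_j = L_{jj} - (m_j/m_1)(L_{11}-\pi_1)$ to reduce to a single-variable polynomial identity in $\pi_1$ and uses the AM-GM bound $|q|^2 \ge 4p_{12}p_{13}p_{23}$ together with the connectedness-induced positivity of the $p_{jk}$'s to preclude a common positive root. A convenient shortcut in the lossless regime (where $L$ is symmetric) is to invoke \Cref{prop: nec and suf for pure imaginary lossless power system} directly: any eigenvector of $M^{-1}L$ lying in $\ker(M^{-1}D)$ must be supported solely on index $1$, which forces $L_{21} = L_{31} = 0$ and disconnects node $1$, so observability of $(M^{-1}L, M^{-1}D)$ cannot fail.
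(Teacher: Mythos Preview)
Your route is genuinely different from the paper's. You expand $\det P(\IU\beta)$ directly via multilinearity over the diagonal damping entries and separate real and imaginary parts; the paper instead works with the real $2n\times 2n$ block matrix
\[
\mathcal{M}(\beta)=\begin{bmatrix}L-\beta^2M & -\beta D\\ \beta D & L-\beta^2M\end{bmatrix},
\]
shows its first five columns are linearly independent by explicit substitution, and then uses the identity $\rank\mathcal{M}(\beta)=2\rank(L-\beta^2M+\IU\beta D)$ (hence even) to force $\rank\mathcal{M}(\beta)=6$. Your $n=2$ argument is correct; for $n=3$ the path subcases and the triangle subcase with $\pi_1>0$ are also correctly dispatched by your sign analysis of the displayed identity.

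The genuine gap is the triangle subcase $\pi_1<0$, which you yourself flag as ``the main obstacle.'' What follows is only a sketch: reduce to a single-variable polynomial in $\pi_1$ via $\pi_j=L_{jj}-(m_j/m_1)(L_{11}-\pi_1)$ and invoke $|q|^2\ge 4p_{12}p_{13}p_{23}$. That polynomial carries all of $d_2,d_3,m_1,m_2,m_3,p_{12},p_{13},p_{23},q$ as free parameters, and the AM--GM bound gives a \emph{lower} bound on $|q|$, not the upper bound you need to control the positive term $q\pi_1$ on the left-hand side; I do not see how to turn your outline into an actual inequality chain. The lossless shortcut via \Cref{prop: nec and suf for pure imaginary lossless power system} is correct but beside the point, since the lossy case is exactly what this theorem adds over that proposition. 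The paper's rank-parity organization is what lets it avoid a sign dichotomy on $\pi_1$: proving $\rank\mathcal{M}(\beta)\ge 5$ needs only that a particular overdetermined linear system is inconsistent, and then the evenness lemma finishes the job without ever confronting the full determinant. If you want to keep your determinant framework you must actually complete the $\pi_1<0$ case; otherwise, importing the parity trick is the cleaner fix.
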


The following counterexample shows that as long as there are two undamped generators, the Jacobian $J(\delta)$ at an equilibrium point may have purely imaginary eigenvalues.
\begin{proposition} \label{prop: non-hyper example}
For any $n\ge 2$, consider an $(n+1)$-generator system with $2$ undamped generators and the following $(n+1)$-by-$(n+1)$ matrices $L=\nabla P_e (\delta^0)$, $D$, and $M$:
\begin{align*}
L = \begin{bmatrix} 
1 & -\frac{1}{n} & -\frac{1}{n} & \cdots & -\frac{1}{n} \\
-\frac{1}{n} & 1 & -\frac{1}{n} & \cdots & -\frac{1}{n} \\
\vdots & \vdots  & \vdots & \ddots & \vdots \\
-\frac{1}{n} & -\frac{1}{n} & -\frac{1}{n} & \cdots & 1
\end{bmatrix},\\
D= \mathbf{diag} ([0,0,d_3,d_4,\cdots,d_{n+1}]), \: M =  I_{n+1}.
\end{align*}
Then $\pm \IU \beta \in \sigma(J(\delta^0))$, where $\beta^2 = 1+\frac{1}{n}$.
\end{proposition}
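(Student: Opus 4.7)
The plan is to exhibit an explicit eigenvector $v$ of $M^{-1}L = L$ associated with a positive eigenvalue $\lambda = 1 + 1/n$ that simultaneously lies in the nullspace of $M^{-1}D = D$, and then invoke \Cref{thrm: nec and suf for pure imaginary lossy}. Equivalently, I can simply verify singularity of the quadratic pencil $P(\IU\beta)$ via \Cref{lemma: relation between ev J and ev J11}.

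First I would rewrite $L$ in the compact form $L = \tfrac{n+1}{n} I_{n+1} - \tfrac{1}{n}\mathbf{1}\mathbf{1}^\top$, where $\mathbf{1}\in\mathbb{R}^{n+1}$ is the all-ones vector. This makes the spectral structure transparent: any vector $v$ with $\mathbf{1}^\top v = 0$ is an eigenvector of $L$ with eigenvalue $\tfrac{n+1}{n} = 1 + \tfrac{1}{n}$. Taking $v := e_1 - e_2 \in \mathbb{R}^{n+1}$, we clearly have $\mathbf{1}^\top v = 0$, so $Lv = (1 + 1/n)\, v$.

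Next I would observe that the two nonzero entries of $v$ are in positions $1$ and $2$, which are exactly the positions where the damping is zero. Hence $Dv = 0$, and since $M = I_{n+1}$, we also have $M^{-1}Dv = 0$. This shows the pair $(M^{-1}L, M^{-1}D) = (L,D)$ fails the observability condition of \Cref{def: Observability} at the eigenvalue $\lambda = 1 + 1/n > 0$.

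Finally, applying \Cref{thrm: nec and suf for pure imaginary lossy} with this positive eigenvalue and eigenvector yields a pair of purely imaginary eigenvalues $\pm\IU\beta\in\sigma(J(\delta^0))$ with $\beta = \sqrt{\lambda}$, so that $\beta^2 = 1 + 1/n$ as claimed. (As a cross-check, one can verify the pencil $P(\IU\beta) = -\beta^2 I + \IU\beta D + L$ directly annihilates $v$: $P(\IU\beta)v = -(1+1/n)v + \IU\beta\cdot 0 + (1+1/n)v = 0$, and then use \Cref{lemma: relation between ev J and ev J11} together with the fact that $J(\delta^0)$ is real to conclude that both $\IU\beta$ and $-\IU\beta$ are eigenvalues.) There is no real obstacle in this argument; the only point requiring care is recognizing the rank-one-plus-scalar structure of $L$, which immediately produces the required non-observable eigenvector supported on the two undamped coordinates.
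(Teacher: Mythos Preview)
Your proof is correct and takes a genuinely different route from the paper. The paper does not exhibit an explicit eigenvector; instead it observes that with $\beta^2=1+\tfrac{1}{n}$ one has $L-\beta^2 M=-\tfrac{1}{n}\mathbf{1}\mathbf{1}^\top$ of rank~$1$ and $\beta D$ of rank $n-1$, and then uses the rank-sum inequality to conclude $\rank(L-\beta^2 M+\IU\beta D)\le n<n+1$, so the pencil is singular and \Cref{lemma: relation between ev J and ev J11} gives $\IU\beta\in\sigma(J)$. Your approach, by contrast, writes $L=\tfrac{n+1}{n}I-\tfrac{1}{n}\mathbf{1}\mathbf{1}^\top$ and picks the concrete null vector $v=e_1-e_2$, which simultaneously sits in the $(1+\tfrac{1}{n})$-eigenspace of $L$ and in $\ker D$. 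This is more constructive, ties the result directly into the observability framework of \Cref{thrm: nec and suf for pure imaginary lossy}, and makes transparent \emph{why} exactly two undamped coordinates suffice: you need a nonzero vector in $\mathbf{1}^\perp$ supported on the undamped indices. The paper's rank argument is slicker but hides this mechanism; your argument reveals it.
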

\begin{proof}
Let $\beta^2 = 1+\frac{1}{n}$ and observe that $\rank(L-\beta^2 M)=1$ and $\rank(\beta D)=(n+1)-2=n-1$. The rank-sum inequality \cite{2013-Horn-matrix-analysis} implies that
\begin{align*}
\rank(L-\beta^2 M- \IU \beta D) &\le \rank(L-\beta^2 M)+\rank(- \IU\beta D) = 1+(n-1)=n,
\end{align*}
that is $\det\left( L + \IU  \beta D - \beta^2 M \right)  = 0$. Now according to \cref{lemma: relation between ev J and ev J11}, the latter is equivalent to $\IU  \beta \in \sigma(J(\delta^0))$. This completes the proof.
Also note that the constructed $L$ is not totally unrealistic for a power system.
\end{proof}

\section{Illustrative Numerical Power System Examples} \label{Sec: Computational Experiments}
Two case studies will be presented to illustrate breaking the hyperbolicity and the occurrence of Hopf bifurcation under damping variations. Additionally, we adopt the center manifold theorem to determine the stability of bifurcated orbits. Note that using the center manifold theorem, a Hopf bifurcation in an $n$-generator network essentially reduces to a planar system provided that aside from the two purely imaginary eigenvalues no other eigenvalues have zero real part at the bifurcation point. Therefore, for the sake of better illustration we focus on small-scale networks.
\subsection{Case $1$} \label{subsubsec: case 1}
Consider a $3$-generator system with $D=\mathbf{diag} ([\gamma,\gamma,1.5])$, $M=I_3$, $Y_{12}=Y_{13}=2Y_{23}=\IU $ p.u., $P_{m_1}=-\sqrt{3}$ p.u., and $P_{m_2}=P_{m_3}=\sqrt{3}/2$ p.u. The load flow problem for this system has the solution $V_j=1$ p.u. $\forall j$ and $\delta_1=0$, $\delta_2=\delta_3=\pi/3$. Observe that when $\gamma=0$, the pair $(M^{-1}\nabla P_e (\delta^0),M^{-1}D)$ is not observable, and \cref{prop: nec and suf for pure imaginary lossless power system} implies that the spectrum of the Jacobian matrix $\sigma(J)$ contains a pair of purely imaginary eigenvalues. Moreover, this system satisfies the assumptions of \cref{prop: non-hyper example}, and consequently, we have $\pm \IU  \sqrt{1.5} \in \sigma(J)$. In order to eliminate the zero eigenvalue (to be able to use the Hopf bifurcation theorem), we adopt the associated referenced model using the procedure described in \Cref{subsec: Referenced Model}. 
The conditions (\ref{coro: hopf conditions_1})-(\ref{coro: hopf conditions_4}) of \cref{coro: Hopf bifurcation} are satisfied (specifically, the transversality condition (\ref{coro: hopf conditions_transvers}) holds because $\mathrm{Im}( q^* M^{-1} D'(\gamma_0) v ) = -0.5$), and accordingly, a periodic orbit bifurcates at this point. 
To determine the stability of bifurcated orbit, we compute the \emph{first Lyapunov coefficient} $l_1(0)$ as described in \cite{2004-kuznetsov-hopf}. If the first Lyapunov coefficient is negative, the bifurcating limit cycle is stable, and the bifurcation is supercritical. Otherwise it is unstable and the bifurcation is subcritical. In this example, we get $l_1(0)=-1.7\times10^{-3}$ confirming that the type of Hopf bifurcation is supercritical and a stable limit cycle is born. Figs. (\ref{fig:sfig1 case1 bifurcation})-(\ref{fig:sfig3 case1 bifurcation}) depict these limit cycles when the parameter $\gamma$ changes. Moreover, Fig. (\ref{fig:sfig4 case1 bifurcation}) shows the oscillations observed in the voltage angles and frequencies when $\gamma=0$.
\begin{figure}
\centering
\begin{subfigure}{0.33\textwidth}
  \centering
  \includegraphics[width=\linewidth]{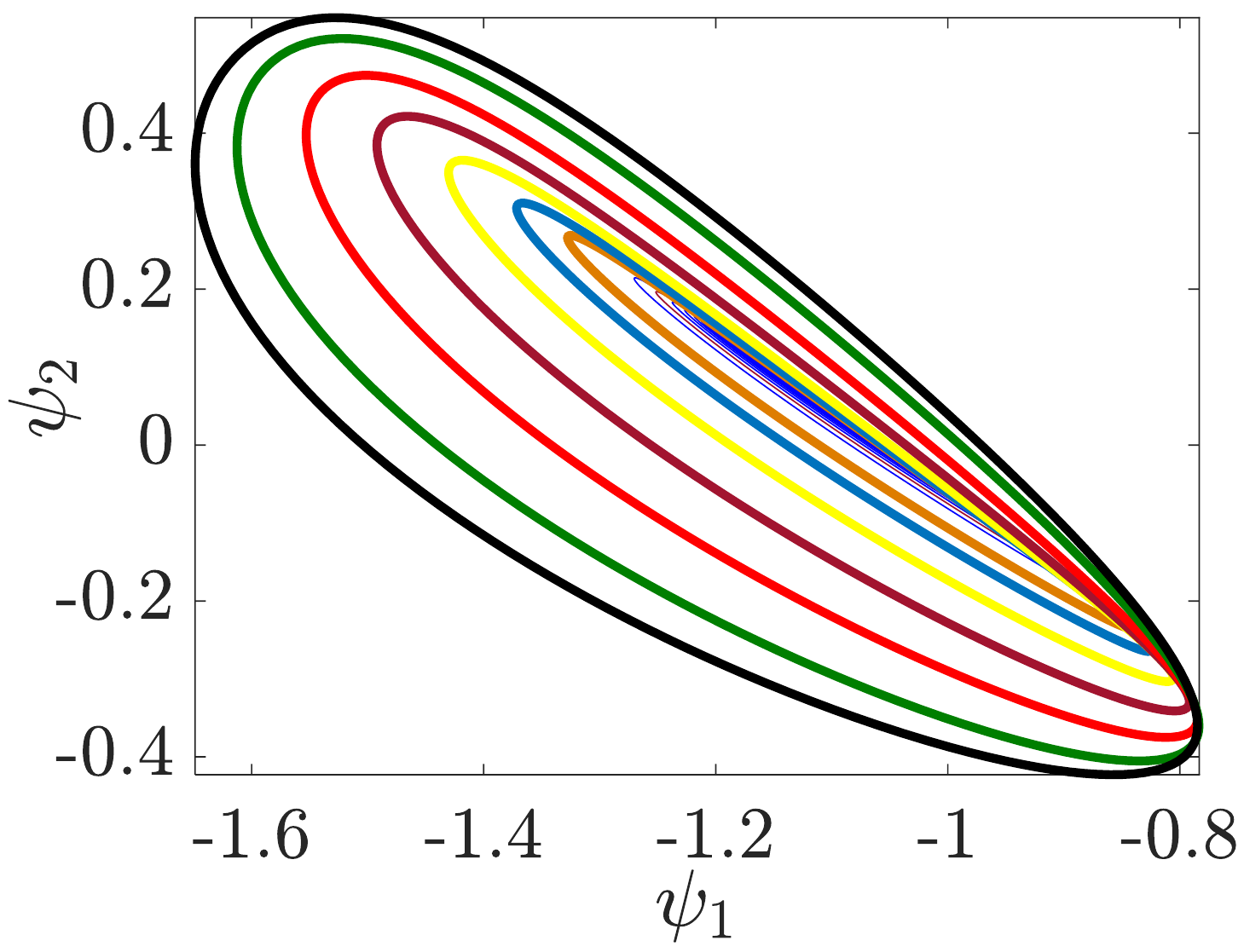}
  \caption{}
  \label{fig:sfig1 case1 bifurcation}
\end{subfigure}%
\begin{subfigure}{0.35\textwidth}
  \centering
  \includegraphics[width=\linewidth]{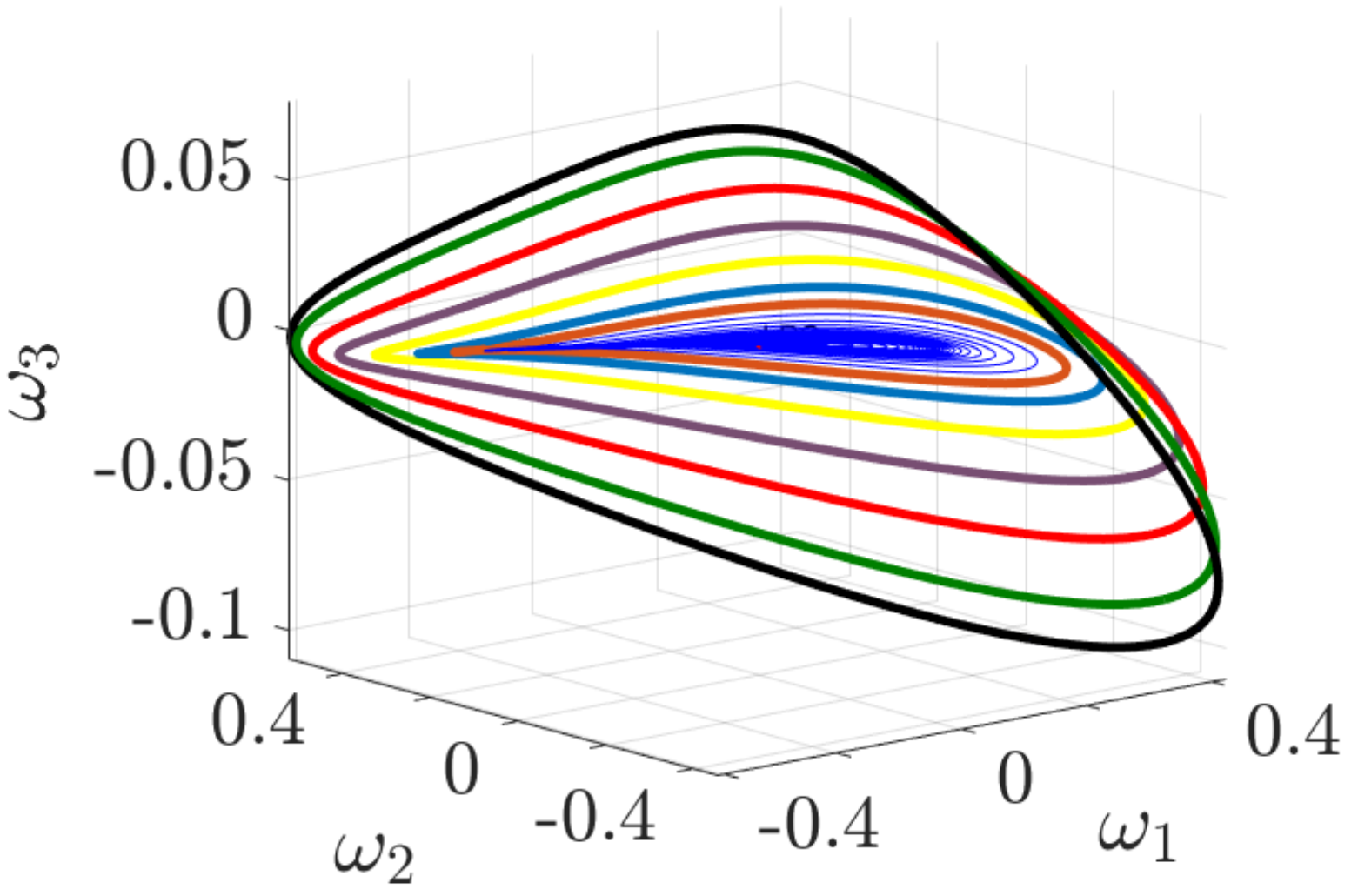}
  \caption{}
  \label{fig:sfig2 case1 bifurcation}
\end{subfigure}
\begin{subfigure}{0.34\textwidth}
  \centering
  \includegraphics[width=\linewidth]{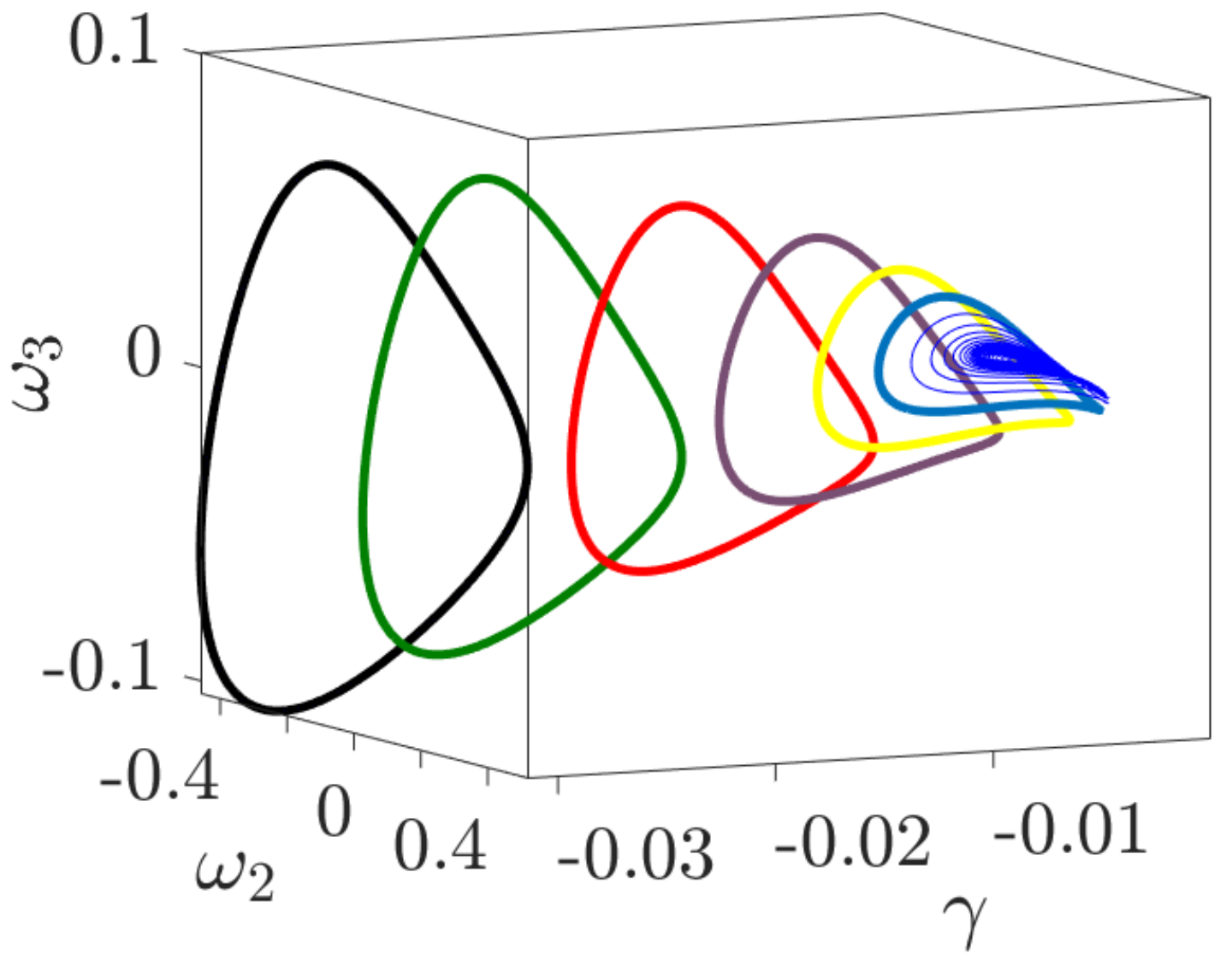}
  \caption{}
  \label{fig:sfig3 case1 bifurcation}
\end{subfigure}
\begin{subfigure}{0.35\textwidth}
  \centering
  \includegraphics[width=\linewidth]{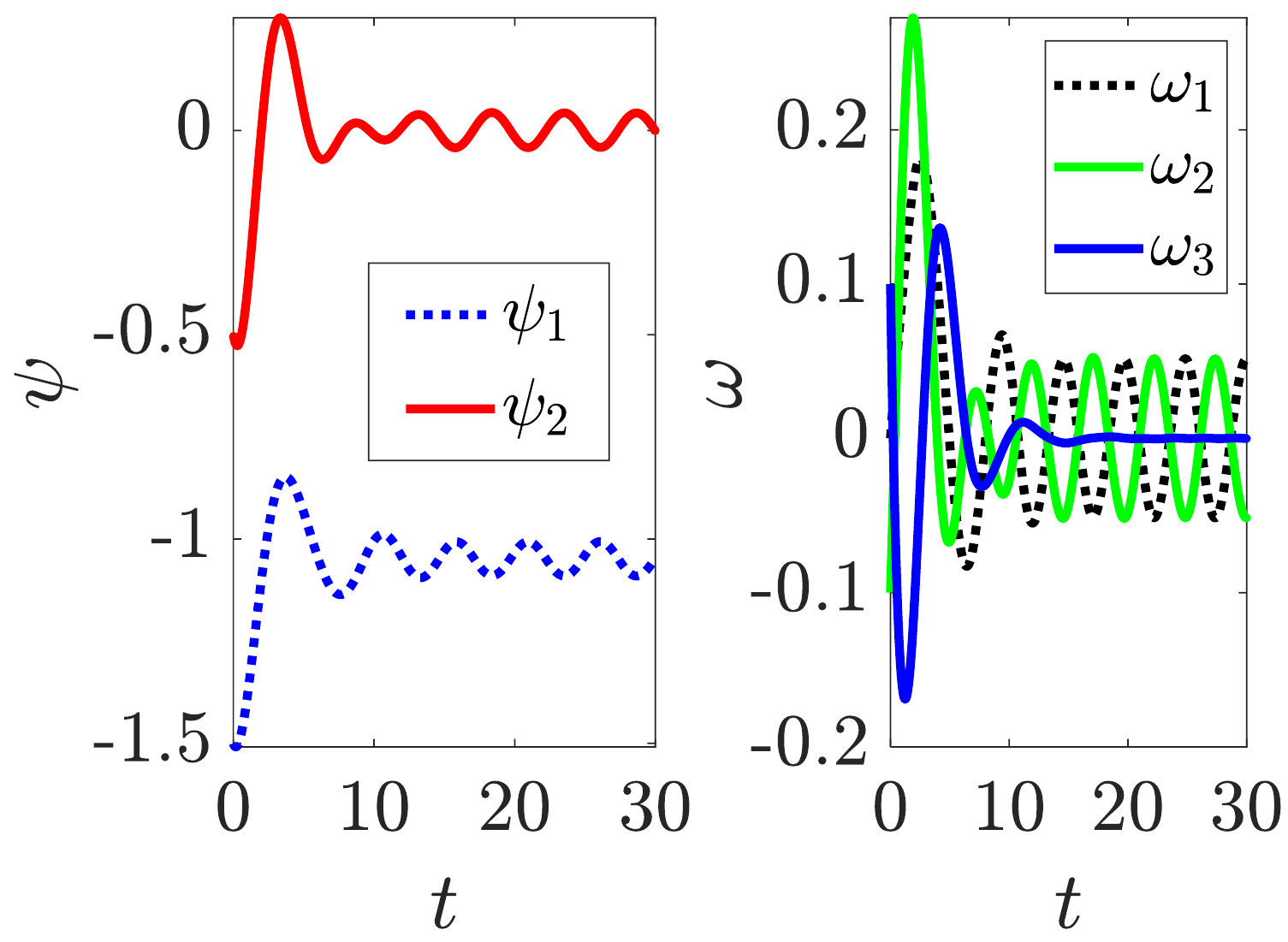}
  \caption{}
  \label{fig:sfig4 case1 bifurcation}
\end{subfigure}
\caption{Occurrence of supercritical Hopf bifurcation in Case $1$. (a)-(c) Projection of limit cycles into different subspaces as the parameter $\gamma$ changes. (d) Oscillations of the voltage angles $\psi$ in radians and the  angular frequency deviation $\omega$ in radians per seconds when $\gamma=0$. Note that $\psi_3 \equiv 0$.}
\label{fig: case1 bifurcation}
\end{figure}
\subsection{Case $2$} \label{subsubsec: case 2}
Next, we explore how damping variations could lead to a Hopf bifurcation in lossy systems. It is proved in \cref{prop:hyperbolicity n2 n3} that a $2$-generator system with only one undamped generator cannot experience a Hopf bifurcation. To complete the discussion, let us consider a fully-damped (i.e., all generators have nonzero damping) lossy $2$-generator system here. Note also that the discussion about a fully-undamped case is irrelevant (see \cref{lemma: undamped systems}). Suppose $M=I_2$, $D= \mathbf{diag} ([\gamma,1])$,  $Y_{12}=-1+\IU 5.7978$ p.u., $P_{m_1}=6.6991$ p.u., and $P_{m_2}=-4.8593$ p.u. The load flow problem for this system has the solution $V_j=1$ p.u. $\forall j$ and $\delta_1=1.4905$, $\delta_2=0$. 
We observe that $\gamma=0.2$ will break the hyperbolicity and lead to a Hopf bifurcation  with the first Lyapunov coefficient $l_1(0.2)=1.15$. This positive value for for $l_1(0.2)$ confirms that the type of Hopf bifurcation is subcritical and an unstable limit cycle bifurcates for $\gamma\ge0.2$. Therefore, the situation can be summarized as follows:
\begin{itemize}
    \item If $\gamma <0.2$, there exists one unstable equilibrium point.
    \item If $\gamma =0.2$, a subcritical Hopf bifurcation takes place and a unique small unstable limit cycle is born.
    \item If $ \gamma >0.2$, there exists a stable equilibrium point surrounded by an unstable limit cycle. 
\end{itemize}

Figs. (\ref{fig:sfig1 case2 bifurcation})-(\ref{fig:sfig3 case2 bifurcation}) depict the bifurcating unstable limit cycles when the parameter $\gamma$ changes in the interval $[0.2,0.35]$. This case study sheds lights on an important fact: bifurcation can happen even in fully damped systems, provided that the damping matrix $D$ reaches a critical point (say $D_c$). When $D \preceq D_c$, the equilibrium point is unstable. On the other hand, when $D \succ D_c$, the equilibrium point becomes stable but it is still surrounded by an unstable limit cycle. As we increase the damping parameter, the radius of the limit cycle increases, and this will enlarge the region of attraction of the equilibrium point. Note that the region of attraction of the equilibrium point is surrounded by the unstable limit cycle. This also confirms the monotonic behaviour of damping proved in \cref{thrm: monotonic damping behaviour}. Fig. (\ref{fig:sfig4 case2 bifurcation}) shows the region of attraction surrounded by the unstable limit cycle (in red) when $\gamma=0.25$. In this figure, the green orbits located inside the cycle are spiraling in towards the equilibrium point while the blue orbits located outside the limit cycle are spiraling out.
\begin{figure}
\centering
\begin{subfigure}{0.35\textwidth}
  \centering
  \includegraphics[width=\linewidth]{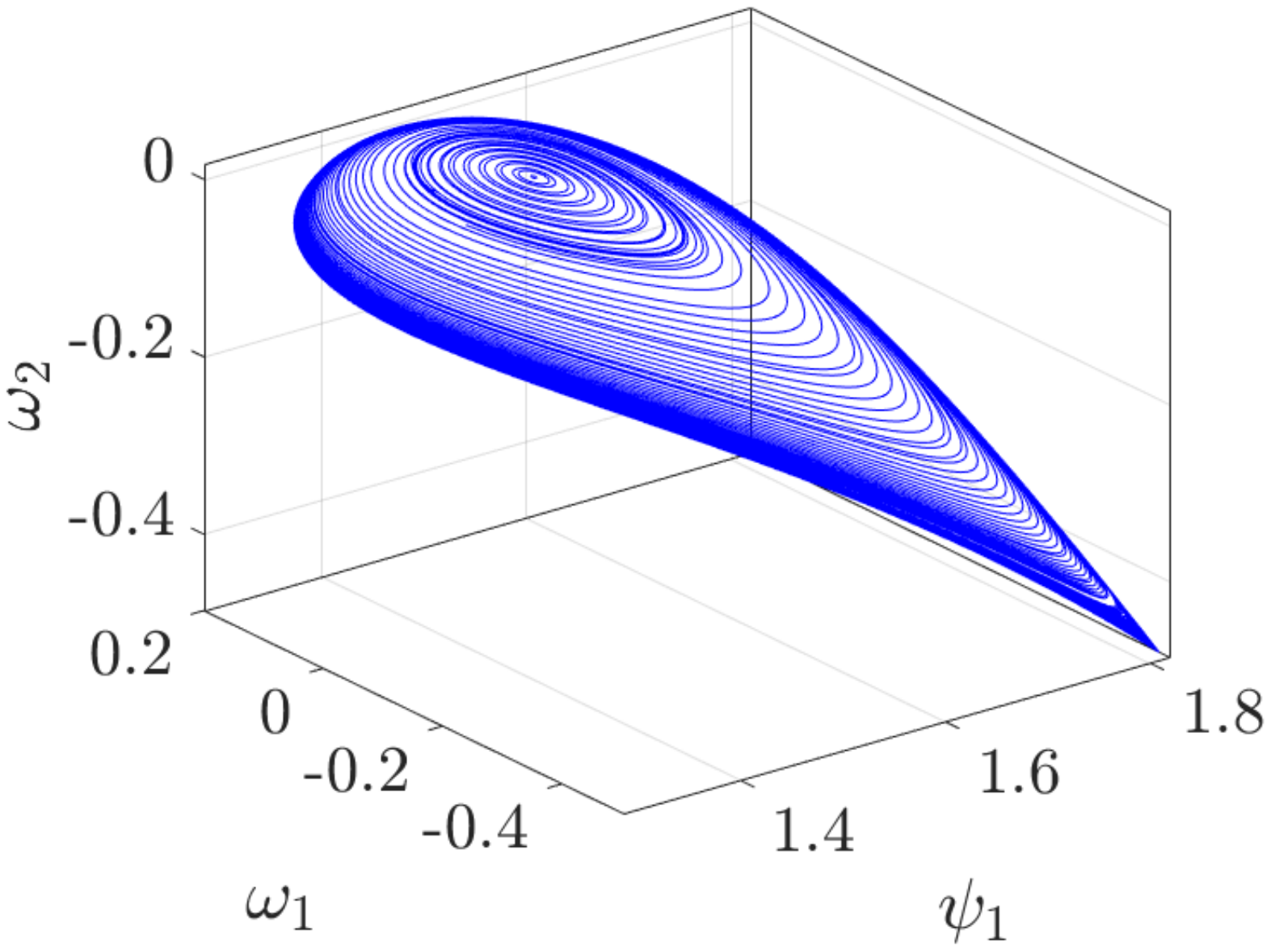}
  \caption{}
  \label{fig:sfig1 case2 bifurcation}
\end{subfigure}%
\begin{subfigure}{0.35\textwidth}
  \centering
  \includegraphics[width=0.9\linewidth]{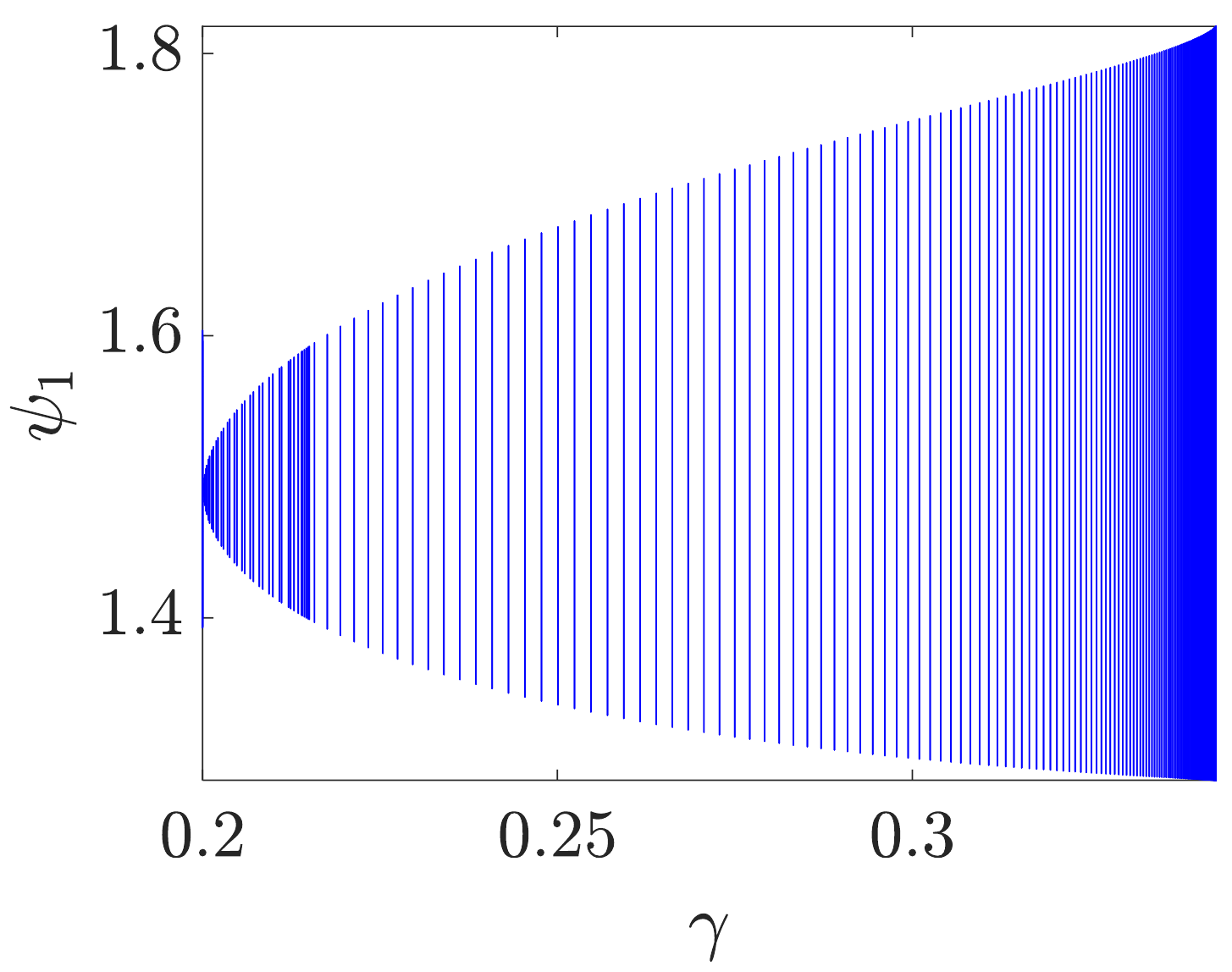}
  \caption{}
  \label{fig:sfig2 case2 bifurcation}
\end{subfigure}
\begin{subfigure}{0.35\textwidth}
  \centering
  \includegraphics[width=\linewidth]{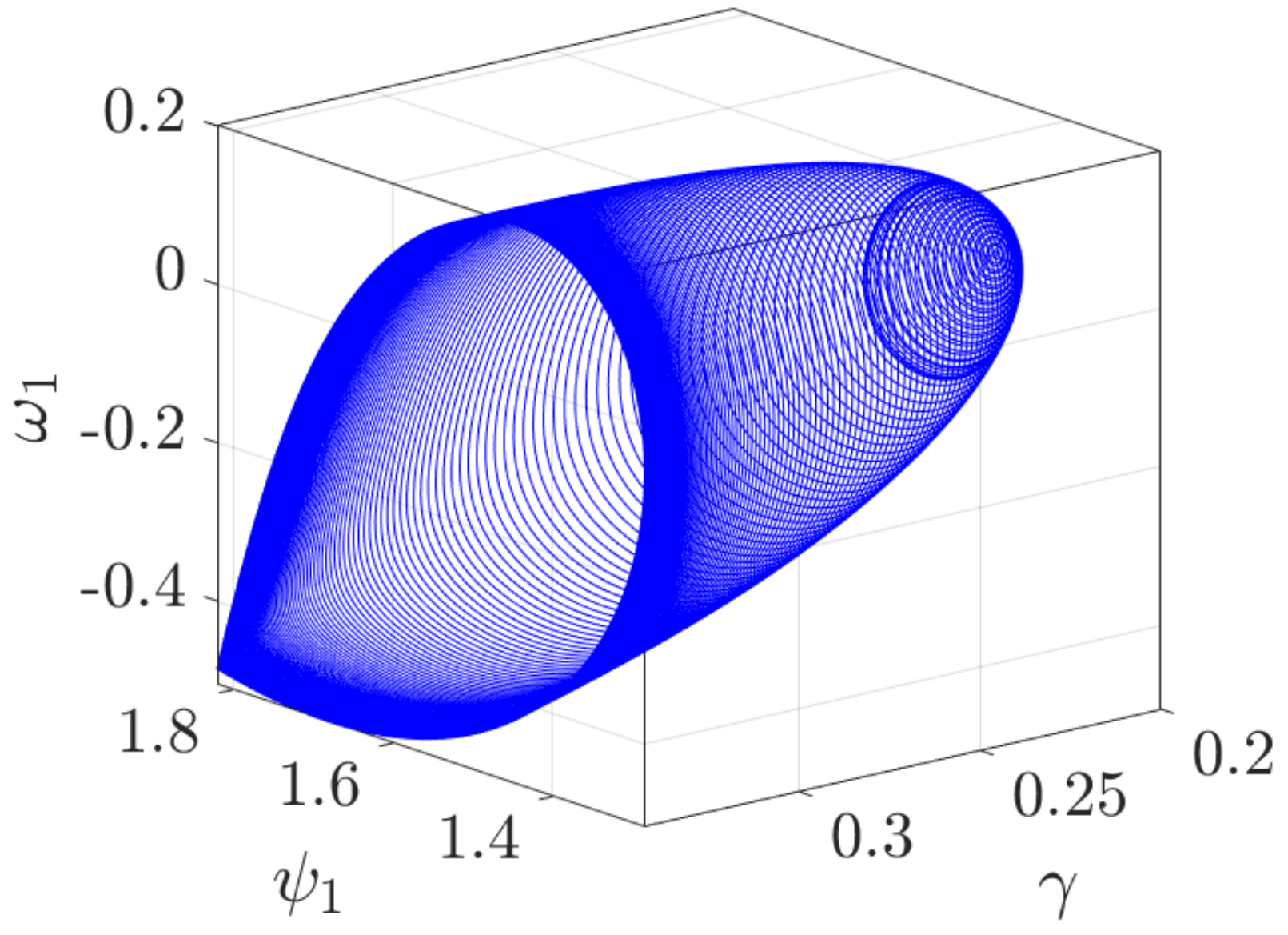}
  \caption{}
  \label{fig:sfig3 case2 bifurcation}
\end{subfigure}
\begin{subfigure}{0.34\textwidth}
  \centering
  \includegraphics[width=\linewidth]{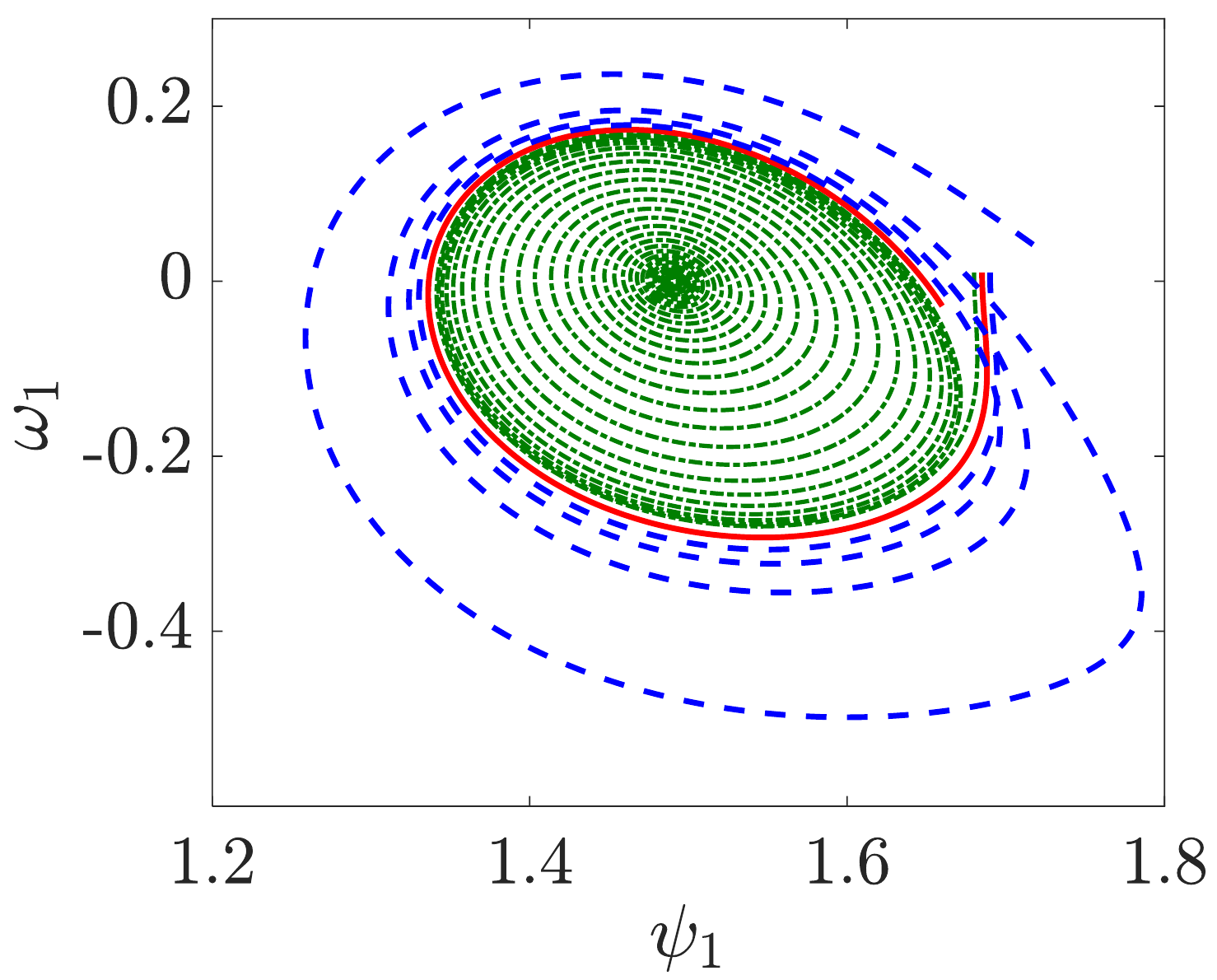}
  \caption{}
  \label{fig:sfig4 case2 bifurcation}
\end{subfigure}
\caption{Occurrence of subcritical Hopf bifurcation in Case $2$. (a) Unstable limit cycles as the parameter $\gamma$ changes. (b)-(c) Projection of limit cycles into different subspaces as the parameter $\gamma$ changes. (d) The region of attraction of the equilibrium point when $\gamma=0.25$. The unstable limit cycle is shown in red, while the orbits inside and outside of it are shown in green and blue, respectively. Note that $\psi_2 \equiv 0$.}
\label{fig: case2 bifurcation}
\end{figure}

Although both supercritical and subcritical Hopf bifurcations lead to the birth of limit cycles, they have quite different practical consequences. The supercritical Hopf bifurcation which occurred \Cref{subsubsec: case 1} corresponds to a soft or noncatastrophic stability loss because a stable equilibrium point is replaced with a stable periodic orbit, and the system remains in a neighborhood of the equilibrium. In this case, the system operator can take appropriate measures to bring the system back to the stable equilibrium point. Conversely, the subcritical Hopf bifurcation in \Cref{subsubsec: case 2} comes with a sharp or catastrophic loss of stability. This is because the region of attraction of the equilibrium point (which is bounded by the unstable limit cycle) shrinks as we decrease the parameter $\gamma$ and disappears once we hit $\gamma=0.2$. In this case, the system operator may not be able to bring the system back to the stable equilibrium point as the operating point may have left the region of attraction.

\section{Conclusions} \label{Sec: Conclusions}
We have presented a comprehensive study on the role of damping in a large class of dynamical systems, including electric power networks. Paying special attention to partially-damped systems, it is shown that damping plays a monotonic role in the hyperbolicity of the equilibrium points. We have proved that the hyperbolicity of the equilibrium points is intertwined with the observability of a pair of matrices, where the damping matrix is involved. We have also studied the aftermath of hyperbolicity collapse, and have shown both subcritical and supercritical Hopf bifurcations can occur as damping changes. It is shown that Hopf bifurcation cannot happen in small power systems with only one undamped generator. In the process, we have developed auxiliary results by proving some important spectral properties of  the power system Jacobian matrix, establishing the relationship between a power system model and its referenced counterpart, and finally addressing a fundamental question from matrix perturbation theory. Among others, the numerical experiments have illustrated how damping can change the region of attraction of the equilibrium points.
We believe our results are of general interest to the community of applied dynamical systems, and provide new insights into the interplay of damping and oscillation in one of the most important engineering system, the electric power systems.
\bibliographystyle{siamplain}
\bibliography{References}
\appendix
%
%
%
\section{Proof of \Cref{lemma: rank principal}} \label{proof of lemma: rank principal}
\begin{proof}
    	Assume that all $r$-by-$r$ principal submatrices of $S$ are singular, and let us lead this assumption to a contradiction. Since $\rank(S)=r$, all principal submatrices of size larger than $r$ are also singular. Therefore, zero is an eigenvalue of every $m$-by-$m$ principal submatrix of $S$ for each $m\ge r$. Consequently, all principal minors of $S$ of size $m$ are zero for each $m\ge r$. Let $E_\ell(S)$ denote the sum of principal minors of $S$ of size $\ell$ (there are $n\choose \ell$ of them), and observe that we have $E_m(S) = 0, \forall m \ge r$. Moreover, thought of as a formal polynomial in $t$, let $p_S (t)=\sum_{\ell=0}^n a_\ell t^\ell$ with $a_n=1$ be the characteristic polynomial of $S$, and recall that the $k$-th derivative of $p_S(t)$ at $t=0$ is $p_S^{(k)}(0)=k!(-1)^{n-k}E_{n-k}(S), \forall k\in\{0,1,\cdots,n-1\}$, and the coefficients of the characteristic polynomial are $a_k=\frac{1}{k!}p_S^{(k)}(0)$. 
    	In this case, our assumption leads to $a_k=p_S^{(k)}(0)=0,\forall k\in\{0,1,\cdots,n-r\}$, i.e., zero is an eigenvalue of $S$ with algebraic multiplicity at least $n-r+1$. But from the assumption of the lemma we know $S$ is similar to $B\oplus0_{n-r}$, that is, zero is an eigenvalue of $S$ with algebraic multiplicity exactly $n-r$, and we arrive at the desired contradiction.
\end{proof}



\section{Stability of Symmetric Second-Order Systems with Nonsingular Damping}
\label{appendix: Stability of Symmetric Second-Order Systems with Nonsingular Damping}

\Cref{thrm: nec and suf for pure imaginary lossless} provides a necessary and sufficient condition for the hyperbolicity of an equilibrium point $(x_0,0)$ of the second-order system \eqref{eq: nonlinear ode}, when the inertia, damping, and Jacobian of $f$ satisfy $M\in\S^{n}_{++}, D\in\S^n_+, \nabla f(x_0)\in\S^n_{++}$. 
In this section, we prove that if we replace the assumption $D\in\S^n_+$ with $D\in\S^n_{++}$, then the equilibrium point $(x_0,0)$ is not only hyperbolic but also asymptotically stable. This asymptotic stability is proved for lossless swing equations in \cite[Theorem 1, Part d]{2021-gholami-sun-MMG-stability}.
The next theorem generalizes \cite[Theorem 1, Part d]{2021-gholami-sun-MMG-stability} to the second-order system \eqref{eq: nonlinear ode} where the damping and inertia matrices are not necessarily diagonal.
\begin{theorem}[stability in second-order systems: symmetric case] \label{thrm: Stability of Symmetric Second-Order Systems with nonsing damping}
    Consider the second-order ODE system \eqref{eq: nonlinear ode} with inertia matrix $M\in\mathbb{S}^n_{++}$ and damping matrix $D \in\mathbb{S}^n_{++}$. Suppose $(x_0,0)\in \mathbb{R}^{n+n}$ is an equilibrium point of the corresponding first-order system \eqref{eq: nonlinear ode 1 order} with the Jacobian matrix $J\in\mathbb{R}^{2n\times 2n}$ defined in \eqref{eq: J general case} such that $L=\nabla f(x_0)\in \mathbb{S}^n_{++}$. Then, the equilibrium point $(x_0,0)$ is locally asymptotically stable.
\end{theorem}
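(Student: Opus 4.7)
The plan is to show that every eigenvalue of the Jacobian $J$ at $(x_0,0)$ has strictly negative real part, and then invoke the standard linearization (Lyapunov) theorem for asymptotic stability of an equilibrium of a smooth nonlinear ODE. The key observation is that when $M, D, L$ are all real symmetric positive definite, the quadratic form obtained from the matrix pencil $P(\lambda) = \lambda^2 M + \lambda D + L$ reduces the spectral question to a real quadratic with strictly positive coefficients.

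Concretely, I would proceed as follows. First, by \cref{lemma: relation between ev J and ev J11}, $\lambda \in \sigma(J)$ if and only if the quadratic matrix pencil $P(\lambda) = \lambda^2 M + \lambda D + L$ is singular, so there exists $v \in \mathbb{C}^n \setminus \{0\}$ with $P(\lambda)v = 0$. Pre-multiplying by $v^*$ gives the scalar equation
\begin{equation*}
\lambda^2\, (v^* M v) + \lambda\, (v^* D v) + (v^* L v) = 0.
\end{equation*}
Since $M, D, L \in \mathbb{S}^n_{++}$ and $v \neq 0$, the three coefficients $a := v^* M v$, $b := v^* D v$, $c := v^* L v$ are strictly positive real numbers.

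It then remains a routine check that every root of the real quadratic $a\mu^2 + b\mu + c = 0$ with $a,b,c > 0$ has negative real part: in the case $b^2 \geq 4ac$ both roots are real and negative because $\sqrt{b^2 - 4ac} < b$; in the case $b^2 < 4ac$ the two complex-conjugate roots have real part $-b/(2a) < 0$. Hence $\mathrm{Re}(\lambda) < 0$ for every eigenvalue of $J$, and in particular $(x_0, 0)$ is a hyperbolic equilibrium of the first-order system \eqref{eq: nonlinear ode 1 order} with all eigenvalues in the open left half plane.

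Finally, since $f$ is continuously differentiable, the vector field on the right-hand side of \eqref{eq: nonlinear ode 1 order} is $\mathcal{C}^1$, and the standard linearization theorem (for instance the Lyapunov stability theorem via linearization) implies that $(x_0, 0)$ is locally asymptotically stable. There is no substantive obstacle: the proof is essentially a one-line energy/Rayleigh argument combined with \cref{lemma: relation between ev J and ev J11}, and the only care needed is to note that $v$ is in general complex but $v^* M v$, $v^* D v$, $v^* L v$ are still positive reals because $M, D, L$ are real symmetric positive definite (equivalently Hermitian positive definite when viewed on $\mathbb{C}^n$). This argument also makes transparent why the theorem cannot be strengthened to $D \in \mathbb{S}^n_+$: if $D$ is only PSD, then $b = v^* D v$ may vanish for some $v$, producing purely imaginary eigenvalues — which is exactly the non-observability mechanism identified in \cref{thrm: nec and suf for pure imaginary lossless}.
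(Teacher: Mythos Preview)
Your proof is correct and considerably more direct than the paper's. The paper proceeds in three steps: (1) rule out nonnegative real eigenvalues by noting that $\lambda^2 M + \lambda D + L$ is positive definite for $\lambda \ge 0$; (2) rule out purely imaginary eigenvalues $\IU\beta$ by forming the real $2n\times 2n$ matrix $H(\beta) = \begin{bmatrix} \beta D & L-\beta^2 M \\ L-\beta^2 M & -\beta D \end{bmatrix}$ and showing via a Schur complement that it is nonsingular; (3) rule out eigenvalues $\alpha+\IU\beta$ with $\alpha>0$, $\beta\neq 0$ by an analogous Schur-complement argument on the $2n\times 2n$ block matrix $H(\alpha,\beta)$. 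Your single Rayleigh-quotient step subsumes all three cases at once: because $M,D,L$ are Hermitian positive definite on $\mathbb{C}^n$, the coefficients $a,b,c$ are strictly positive reals regardless of whether $\lambda$ is real, purely imaginary, or genuinely complex, and the elementary sign analysis of $a\mu^2+b\mu+c$ finishes the argument. The paper even sets up exactly this computation in the discussion around \eqref{eq: quadratic matrix pencil equation}--\eqref{eq: quadratic matrix pencil equation roots} but only as ``intuition,'' cautioning that the eigenvector $v$ changes with $D$; that caveat is irrelevant here because $D$ is fixed, so your argument is fully rigorous. What the paper's approach buys is a template (the block matrix $H$ and its Schur complement) that reappears in the proof of \cref{prop:hyperbolicity n2 n3}, where symmetry is lost; your approach buys brevity and transparency for this particular theorem.
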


\begin{proof}
We complete the proof in three steps:\newline
\textbf{Step 1:} First, we show all real eigenvalues
of $J$ are negative. Assume $\lambda\in\R, \lambda\ge0$ is a nonnegative eigenvalue of $J(x_0)$, and let us lead this assumption to a contradiction. According to \Cref{lemma: relation between ev J and ev J11},
\begin{align} \label{eq: pencil det zero appendix}
    \det\left(\lambda^2 M + \lambda D + L \right)  = 0.
\end{align}
Since all three matrices $L, D$, and $M$ are positive definite, the
matrix pencil $P(\lambda)=\lambda^2 M + \lambda D + L $ is also a positive definite matrix for any nonnegative $\lambda$. Hence $P(\lambda)$ is nonsingular, contradicting \eqref{eq: pencil det zero appendix}.

\noindent
\textbf{Step 2:} Next, we prove that the eigenvalues of $J$ cannot be purely imaginary. We provide two different proofs for this step.
According to our assumption, the damping matrix $D$ is nonsingular, and the pair $(M^{-1}\nabla f(x_0),M^{-1}D)$ is always observable because the nullspace of $M^{-1}D$ is trivial. Hence, according to
 \Cref{thrm: nec and suf for pure imaginary lossless}, the  equilibrium point $(x_0,0)$ is hyperbolic, and $J(x_0)$ does not have any purely imaginary eigenvalue, so the first proof of this step is complete. For the second proof,
let $\lambda \in \sigma(J(x_0))$, then according to \Cref{lemma: relation between ev J and ev J11}, $\exists v\in\C^n, v\ne0$ such that $(\lambda^2 M + \lambda D + L) v = 0.$
	Suppose, for the sake of contradiction, that $\lambda  = \IU \beta \in \sigma(J(x_0))$ for some nonzero real $\beta$. Let $v  =  x + \IU y$, then $((L - \beta^2 M) + \IU\beta D) (x+\IU y) = 0$,
	which can be equivalently written as
	\begin{align}
	\begin{bmatrix}
	L - \beta^2 M & -\beta D \\
	\beta D & L - \beta^2 M
	\end{bmatrix} \begin{bmatrix}
	x\\y
	\end{bmatrix} = \begin{bmatrix}
	0\\0
	\end{bmatrix}.
	\end{align}
	Define the matrix 
	\begin{align} \label{eq:M}
	H(\beta) := \begin{bmatrix}
	\beta D &  L  - \beta^2 M\\
	 L  - \beta^2 M & -\beta D
	\end{bmatrix}. 
	\end{align}
	Since $L \in \mathbb{S}^n_{++}$, $H(\beta)$ is a symmetric matrix.
	Notice also that $H(\beta)$ cannot be positive semidefinite due to the diagonal blocks $\pm\beta D$. 
	Since $D\in \mathbb{S}^n_{++}$, the determinant of $H(\beta)$ can be expressed using Schur complement as
	\begin{align*}
	 {\det}(H(\beta))  =  {\det} (-\beta D) {\det} (\beta D 
	  + \beta^{-1} ( L  - \beta^2 M )D^{-1}(  L  - \beta^2 M)).
	\end{align*}
	So we only need to consider the nonsingularity of the Schur complement.
	Define the following matrices for the convenience of analysis: 
	\begin{align*}
		A(\beta) &:=  L  - \beta^2 M, \\
		B(\beta) &:= D^{-\frac{1}{2}}A(\beta)D^{-\frac{1}{2}},\\
		E(\beta) &:= I + \beta^{-2} B(\beta)^2.
	\end{align*}
	The inner matrix of the Schur complement can be written as
		\begin{align*}
		& \beta D + \beta^{-1} ( L  - \beta^2 M )D^{-1}(  L  - \beta^2 M) \\ 
	 & =  \beta D^{\frac{1}{2}} (I + \beta^{-2} D^{-\frac{1}{2}}A(\beta)D^{-1}A(\beta)D^{-\frac{1}{2}})D^{\frac{1}{2}} \\
	 & =  \beta D^{\frac{1}{2}} (I + \beta^{-2} B(\beta)^2)D^{\frac{1}{2}} = \beta D^{\frac{1}{2}} E(\beta) D^{\frac{1}{2}}.
		\end{align*}
	Notice that {$E(\beta)$ and $B(\beta)$ have the same eigenvectors and the eigenvalues of $E(\beta)$ and $B(\beta)$ have a one-to-one correspondence: $\mu$ is an eigenvalue of $B(\beta)$ if and only if $1 + \beta^{-2} \mu^2$ is an eigenvalue of $E(\beta)$}. Indeed, we have 
	 $E(\beta)v = v + \beta^{-2}B(\beta)^2v = v + \beta^{-2}\mu^2 v = (1+\beta^{-2}\mu^2)v$ for any eigenvector $v$ of $B(\beta)$ with eigenvalue $\mu$. 
	Since $B(\beta)$ is symmetric, $\mu$ is a real number. Hence, $E(\beta)$ is positive definite (because $1+\beta^{-2}\mu^2>0$), therefore $H(\beta)$ is nonsingular for any real nonzero $\beta$. Then, the eigenvector $v=x+\IU y$ is zero which is a contradiction. This proves that $J(x_0)$ has no eigenvalue on the punctured imaginary axis.

	
	\noindent \textbf{Step 3:}
	Finally, we prove that any complex nonzero eigenvalue of $J(x_0)$ has a negative real part. 
	 For a complex eigenvalue $\alpha + \IU \beta$ of $J(x_0)$ with $\alpha\ne 0, \beta\ne 0$, by setting $v  =  x + \IU  y$, the pencil singularity equation becomes
	\begin{align*}
	( L  + (\alpha + \IU \beta) D + (\alpha^2 - \beta^2 + 2\alpha\beta \IU )M)(x+\IU y) = 0.
	\end{align*}
	Similar to Step 2 of the proof, define the matrix $H(\alpha,\beta)$ as
	\begin{align*}
	H(\alpha,\beta):=\begin{bmatrix}
	 L  + \alpha D + (\alpha^2-\beta^2) M & -\beta(D + 2\alpha M) \\
	\beta(D+2\alpha M) &  L  + \alpha D + (\alpha^2-\beta^2) M
	\end{bmatrix}.
	\end{align*}
	We only need to consider two cases, namely 1) $\alpha > 0, \beta > 0$ or 2) $\alpha <0, \beta > 0$. For the first case, $\beta(D+{2}\alpha M)$ is invertible and positive definite, therefore, we only need to look at the invertibility of the Schur complement
	\begin{align*}
	& S(\alpha,\beta) + T(\alpha,\beta) S^{-1}(\alpha,\beta) T(\alpha,\beta),
	\end{align*}
	where $S(\alpha,\beta):= \beta(D+{2}\alpha M)$ and $T(\alpha,\beta):= L  + \alpha D + (\alpha^2-\beta^2) M$.
	Using the same manipulation as in Step 1 of the proof, we can see that the Schur complement is always invertible for any $\alpha >0, \beta>0$. This implies the eigenvector $v$ is 0, which is a contradiction. Therefore, the first case is not possible. So any complex nonzero eigenvalue of $J(x_0)$ has a negative real part.
\end{proof}

\section{Proof of \cref{thrm: nec and suf for pure imaginary lossy}}
\label{proof of thrm: nec and suf for pure imaginary lossy}
\begin{proof}
    %
	There exist	$\lambda \in \mathbb{R_{+}}, \lambda\ne0$ and $x \in \mathbb{C}^n , x\ne 0$ such that
	\begin{align} \label{eq: observability in lossy nonsymmetric}
	 M^{-1}Lx = \lambda x \text{ and } M^{-1}Dx = 0.
	\end{align}
	Define $\xi=\sqrt{-\lambda}$, which is a purely imaginary number. The quadratic matrix pencil $M^{-1}P(\xi)=\xi^2 I + \xi M^{-1}D + M^{-1}L$ is singular because $M^{-1}P(\xi)x = \xi^2 x + \xi M^{-1}Dx + M^{-1}Lx = -\lambda x + 0 + \lambda x = 0$. By  \cref{lemma: relation between ev J and ev J11}, $\xi$ is an eigenvalue of $J$. Similarly, we can show $-\xi$ is an eigenvalue of $J$. Therefore, $\sigma(J)$ contains a pair of purely imaginary eigenvalues.
\end{proof}

\section{Proof of \cref{thrm: original model vs referenced model}}
\label{proof of thrm: original model vs referenced model}

Let us first prove the following useful lemma.
\begin{lemma} \label{lemma: pencil for referenced systems}
    Let $(\delta^0,\omega^0)$ be an equilibrium point of the swing equation \eqref{eq: swing equations} and $\Psi(\delta^0,\omega^0)$ be the corresponding equilibrium point of the referenced model \eqref{eq: Swing Equation Polar referenced}. Let $J^r$ denote the Jacobian of the referenced model at this equilibrium point.
	For any $\lambda \ne 0$, $\lambda$ is an eigenvalue of $J^r$ if and only if the quadratic matrix pencil $P(\lambda):= \lambda^2 M  + \lambda D + \nabla P_e (\delta^0)$ is singular.
\end{lemma}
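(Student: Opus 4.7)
I would start by writing the Jacobian $J^r$ of the referenced model \eqref{eq: Swing Equation Polar referenced} in block form. The referenced dynamics can be written compactly as $\dot{\psi} = T_1 \omega$ and $M \dot{\omega} + D\omega + P_e^r(\psi) - P_m = 0$, so the linearization at the equilibrium $\Psi(\delta^0, \omega^0) \in \mathbb{R}^{n-1}\times\mathbb{R}^n$ is
\[
J^r = \begin{bmatrix} 0 & T_1 \\ -M^{-1} L^r & -M^{-1} D \end{bmatrix},
\]
where $L^r := \nabla P_e^r(\psi^0) \in \mathbb{R}^{n \times (n-1)}$. If $(\lambda,(u,v))$ with $u\in\mathbb{C}^{n-1}$, $v\in\mathbb{C}^n$ is an eigenpair of $J^r$ with $\lambda \neq 0$, the first block row forces $u = \lambda^{-1} T_1 v$, and hence $v \neq 0$ (otherwise $(u,v)$ would vanish). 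Substituting this into the second block row and multiplying through by $\lambda$ produces $(\lambda^2 M + \lambda D + L^r T_1) v = 0$.

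The heart of the argument is the matrix identity $L^r T_1 = L$, where $L = \nabla P_e(\delta^0)$. Because $P_e^r(\psi) = P_e(\psi_1,\ldots,\psi_{n-1},0)$, the chain rule shows that $L^r$ coincides exactly with the first $n-1$ columns of $L$; here I use the translational invariance $P_e(\delta + \alpha\mathbf{1}) = P_e(\delta)$ to note that $\nabla P_e$ evaluated at $(\psi^0_1,\ldots,\psi^0_{n-1},0)$ equals $\nabla P_e(\delta^0)$, since these two points differ only by $\delta_n^0\mathbf{1}$. The same translational invariance yields $L\mathbf{1}_n = 0$, so the $n$-th column of $L$ equals $-L^r \mathbf{1}_{n-1}$. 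Putting these together,
\[
L^r T_1 = L^r \begin{bmatrix} I_{n-1} & -\mathbf{1}_{n-1} \end{bmatrix} = \begin{bmatrix} L^r & -L^r \mathbf{1}_{n-1} \end{bmatrix} = L,
\]
so the reduced equation is exactly $P(\lambda) v = 0$ with $v \neq 0$, proving the forward direction.

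For the converse, I would take any nonzero $v \in \mathbb{C}^n$ with $P(\lambda) v = 0$ and $\lambda \neq 0$, set $u := \lambda^{-1} T_1 v$, and verify that $(u,v)$ is a nonzero eigenvector of $J^r$ associated to $\lambda$: the first block equation holds by construction, and reversing the algebra of the forward direction (again invoking $L^r T_1 = L$) recovers the second. The main obstacle, though modest, is establishing the identity $L^r T_1 = L$ cleanly, since it rests jointly on the chain-rule description of $L^r$ as a submatrix of $L$ and on the translational invariance encoded in $L\mathbf{1} = 0$; once this identity is secured, the remainder reduces to routine block linear algebra.
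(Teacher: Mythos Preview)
Your proposal is correct and follows essentially the same route as the paper: both reduce the eigenvalue equation for $J^r$ via $u=\lambda^{-1}T_1 v$ to $(\lambda^2 M+\lambda D + L^r T_1)v=0$ and then establish the key identity $L^r T_1=\nabla P_e(\delta^0)$ from the zero-row-sum property $L\mathbf{1}=0$. The only cosmetic difference is that the paper writes $L^r=\nabla P_e(\delta^0)\,T_2$ with $T_2=\begin{bmatrix}I_{n-1}\\0\end{bmatrix}$ and verifies $LT_2T_1=L$ by a block computation, whereas you obtain the same submatrix description of $L^r$ directly from the chain rule and translational invariance.
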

\begin{proof}
 The referenced model \eqref{eq: Swing Equation Polar referenced} can be written as
\begin{align}\label{eq:swing reduced}
\begin{bmatrix}
\dot {\psi} \\
\dot \omega 
\end{bmatrix}
=
\begin{bmatrix}
T_1 \omega \\
- D M^{-1} \omega + M^{-1} (P_m -P_e^r(\psi))
\end{bmatrix}.
\end{align}
%
Note that the Jacobian of the referenced flow function $\nabla P^r_e(\psi)$ is an $n \times (n-1)$ matrix and we have $\nabla P^r_e(\psi^0) = \nabla P_e (\delta^0) T_2$, where
\begin{align}
T_2:=\begin{bmatrix}
I_{n-1} \\ 0_{1\times(n-1)}
\end{bmatrix} \in \mathbb{R}^{n\times(n-1)}.
\end{align}
Accordingly, the Jacobin of the referenced model \eqref{eq: Swing Equation Polar referenced} is
\begin{align}
J^r=\begin{bmatrix}
0_{(n-1)\times (n-1)} & T_1 \\
-M^{-1} \nabla P_e (\delta^0) T_2 & -DM^{-1}
\end{bmatrix}.
\end{align}
	\textit{Necessity:} Let $\lambda$ be a nonzero eigenvalue of $J^r$ and $ (v_1 , v_2 ) $ be the corresponding eigenvector with $v_1\in\mathbb{C}^{n-1}$ and $v_2\in\mathbb{C}^{n}$. Then	
	\begin{align} \label{eq: J cha eq referenced}
	\begin{bmatrix}
	0_{(n-1)\times (n-1)} & T_1 \\
	-M^{-1}\nabla P_e (\delta^0) T_2 & -DM^{-1}
	\end{bmatrix}   \begin{bmatrix} v_1 \\v_2  \end{bmatrix}  = \lambda   \begin{bmatrix} v_1 \\v_2  \end{bmatrix},
	\end{align}
	which implies that $ T_1 v_2 = \lambda v_1$. Since $\lambda \ne 0$, we can substitute $ \lambda^{-1} T_1 v_2 = v_1$ in the second equation to obtain 
	\begin{align} 
	\left(\lambda^2 M + \lambda D + \nabla P_e (\delta^0) T_2 T_1 \right) v_2 = 0. \label{eq: quadratic matrix pencil referenced}
	\end{align}  
	Since the eigenvector $(v_1,v_2)$ is nonzero, we have $v_2 \not = 0$ (otherwise $ v_1=\lambda^{-1} T_1 0 = 0   \implies (v_1,v_2) = 0 $), Eq. \eqref{eq: quadratic matrix pencil referenced} implies that the matrix pencil $P(\lambda)= \lambda^2 M + \lambda D + \nabla P_e (\delta^0) T_2 T_1$ is singular. Next, we show that $\nabla P_e (\delta^0) T_2 T_1 = \nabla P_e (\delta)$. Since $\nabla P_e (\delta^0)$ has zero row sum, it can be written as
	\begin{align*}
	\nabla P_e (\delta^0)=\begin{bmatrix}
	A&b\\c^\top&d
	\end{bmatrix}, \text{ where } A\mathbf{1}=-b, c^\top \mathbf{1}=-d.
	\end{align*}
	Therefore, we have
	\begin{align*}
	\nabla P_e (\delta^0) T_2 T_1 =  \begin{bmatrix}
	A&b\\c^\top&d
	\end{bmatrix} 
	\begin{bmatrix}
	I_{n-1} \\ 0
	\end{bmatrix}
	\begin{bmatrix}
	I_{n-1}& -\mathbf{1}
	\end{bmatrix}
	= 
	\begin{bmatrix}
	A&-A\mathbf{1}\\c^\top&-c^\top\mathbf{1}
	\end{bmatrix}
	=
	\begin{bmatrix}
	A&b\\c^\top&d
	\end{bmatrix}. 
	\end{align*}
	
	\textit{Sufficiency:}	
	Suppose there exists $\lambda \in \mathbb{C}, \lambda \ne 0$ such that  $P(\lambda)= \lambda^2 M + \lambda D + \nabla P_e (\delta^0)$ is singular. Choose a nonzero $v_2 \in \ker (P(\lambda))$ and let $v_1:=\lambda^{-1} T_1 v_2$. 
	Accordingly, the characteristic equation \eqref{eq: J cha eq referenced} holds, and consequently, $\lambda$ is a nonzero eigenvalue of $J^r$.
\end{proof}

Now, we are ready to prove \cref{thrm: original model vs referenced model}.

\begin{proof}
Any equilibrium point $(\delta^0,\omega^0)$ of the swing equation model \eqref{eq: swing equations} is contained in the set
\begin{align*}
    \mathcal{E}:=\left \{ (\delta,\omega)\in \mathbb{R}^{2n} : \omega = 0, P_{m_j} = P_{e_j}(\delta), \quad \forall j \in \{1,...,n\} \right \}.
\end{align*}
Let $(\psi^0,\omega^0)=\Psi(\delta^0,\omega^0)$, and note that $\omega^0=0$. From \eqref{eq: flow function} and \eqref{eq: flow function referenced compact}, we observe that $P_{e_j}(\delta^0) = P_{e_j}^r(\psi^0), \forall j \in \{1,...,n\}$ where $\psi^0_n :=0$. Therefore, $(\psi^0,\omega^0)$ is an equilibrium point of the the referenced model \eqref{eq: Swing Equation Polar referenced}. 
\\
To prove the second part, recall that $\lambda$ is an eigenvalue of the Jacobian of \eqref{eq: swing equations} at $(\delta^0,\omega^0)$ if and only if $\det( \nabla P_e (\delta^0)  +\lambda D + \lambda^2 M)=0$. 
According to \cref{lemma: pencil for referenced systems}, the nonzero eigenvalues $J$ and $J^r$ are the same. Moreover, the referenced model \eqref{eq: Swing Equation Polar referenced} has one dimension less than the  swing equation model \eqref{eq: swing equations}. This completes the proof.
%
\end{proof}


\section{Proof of \Cref{prop:hyperbolicity n2 n3}}
\label{proof of prop:hyperbolicity n2 n3}

We prove the following lemmas first:
\begin{lemma} \label{lemma: complex representation}
Let $A,B\in\mathbb{R}^{n\times n}$ and define 
$$C:=  \small{ \left[\begin{array}{cc}A&-B\\B&A\end{array}\right] }.$$
Then $\rank(C)=2\rank(A+\IU B)$ which is an even number.
\end{lemma}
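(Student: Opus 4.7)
The plan is to exhibit an explicit invertible change of basis over $\mathbb{C}$ that block-diagonalizes $C$ with diagonal blocks $A+\IU B$ and $A-\IU B$. Once we have such a similarity, rank is preserved, so $\rank(C)=\rank(A+\IU B)+\rank(A-\IU B)$, and since $A$ and $B$ are real we have $A-\IU B=\overline{A+\IU B}$, giving $\rank(A-\IU B)=\rank(A+\IU B)$. The desired identity $\rank(C)=2\rank(A+\IU B)$ (and in particular its parity) follows immediately.

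The main computational step is to verify the block-diagonalization. I would take the $2n\times 2n$ complex matrix
\[
P:=\tfrac{1}{\sqrt{2}}\begin{bmatrix} I_n & I_n \\ \IU I_n & -\IU I_n\end{bmatrix},
\qquad
P^{-1}=\tfrac{1}{\sqrt{2}}\begin{bmatrix} I_n & -\IU I_n \\ I_n & \IU I_n\end{bmatrix},
\]
and check directly that
\[
P^{-1} C P = \begin{bmatrix} A-\IU B & 0 \\ 0 & A+\IU B\end{bmatrix}.
\]
This is a short $2\times 2$ block multiplication that uses only the identities $\IU\cdot\IU=-1$ and the block structure of $C$; there is no real obstacle here, just bookkeeping.

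The only subtlety worth flagging is conceptual rather than technical: the matrix $P$ necessarily has complex entries, so the block-diagonalization lives over $\mathbb{C}$, not over $\mathbb{R}$. That is fine for the rank argument, since $\rank$ of a complex matrix is invariant under complex similarity and also coincides with the rank of $C$ viewed as a real matrix (the rank of a matrix with real entries is the same whether computed over $\mathbb{R}$ or over $\mathbb{C}$). After the similarity step, the evenness of $\rank(C)$ is automatic from $\rank(C)=2\rank(A+\IU B)$, so no separate parity argument is needed.
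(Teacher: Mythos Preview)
Your proposal is correct and follows essentially the same approach as the paper: both arguments produce an explicit complex similarity that block-diagonalizes $C$ into $(A-\IU B)\oplus(A+\IU B)$ and then invoke rank invariance under similarity together with $\rank(\overline{M})=\rank(M)$. Your change-of-basis matrix $P$ differs cosmetically from the paper's (the paper uses $V=\tfrac{1}{\sqrt{2}}\left[\begin{smallmatrix} I & \IU I\\ \IU I & I\end{smallmatrix}\right]$, which is unitary), and your remark that the rank of a real matrix is the same over $\mathbb{R}$ and $\mathbb{C}$ is a welcome clarification the paper leaves implicit.
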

\begin{proof}
Let $V:= \frac{1}{\sqrt{2}} \left[\begin{array}{cc}I_n&\IU I_n\\\IU I_n&I_n\end{array}\right] $ and observe that $V^{-1}=\bar{V}=V^*$, where $\bar{V}$ stands for the entrywise conjugate and $V^*$ denotes the conjugate transpose of $V$. We have 
\begin{align*}
V^{-1}CV=\left[\begin{array}{cc}A-\IU B&0\\0&A+\IU B\end{array}\right] = (A-\IU B) \oplus (A+\IU B).
\end{align*}
Since rank is a similarity invariant, we have $\rank(C)=\rank((A-\IU B) \oplus (A+\IU B))=2\rank(A+\IU B)$.
\end{proof}
\begin{lemma}  \label{lemma: matrix form of pencil sinularity}
$\lambda = \IU  \beta$ is an eigenvalue of $J$ if and only if
the matrix
\begin{align*}
\mathcal{M}(\beta):= \begin{bmatrix}
L-\beta^2 M & -\beta D\\ \beta  D & L-\beta^2 M  
\end{bmatrix}
\end{align*}
is singular. Here $L=\nabla P_e (\delta^0)$.
\end{lemma}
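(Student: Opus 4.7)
The plan is to combine the two previously established results directly: \Cref{lemma: relation between ev J and ev J11}, which connects eigenvalues of $J$ to singularity of the quadratic matrix pencil $P(\lambda) = \lambda^2 M + \lambda D + L$, and the immediately preceding \Cref{lemma: complex representation}, which expresses the rank of a complex $n \times n$ matrix in terms of the rank of an associated real $2n \times 2n$ block matrix. Since the target $\mathcal{M}(\beta)$ is precisely the block matrix of \Cref{lemma: complex representation} applied to the real and imaginary parts of $P(\IU\beta)$, the statement should follow in a few lines with no new ingredients.

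First, I would invoke \Cref{lemma: relation between ev J and ev J11} to rewrite the eigenvalue condition: $\IU\beta \in \sigma(J)$ is equivalent to the pencil
$$P(\IU\beta) = (\IU\beta)^2 M + (\IU\beta) D + L = (L - \beta^2 M) + \IU\, \beta D$$
being singular, i.e.\ to $\rank\bigl((L-\beta^2 M) + \IU\,\beta D\bigr) < n$.

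Next, I would set $A := L - \beta^2 M$ and $B := \beta D$, both real $n\times n$ matrices, and observe that
$$\mathcal{M}(\beta) = \begin{bmatrix} L-\beta^2 M & -\beta D \\ \beta D & L-\beta^2 M\end{bmatrix} = \begin{bmatrix} A & -B \\ B & A \end{bmatrix},$$
so \Cref{lemma: complex representation} applies verbatim and yields
$$\rank(\mathcal{M}(\beta)) = 2\,\rank\bigl((L - \beta^2 M) + \IU\,\beta D\bigr).$$
Consequently, $\mathcal{M}(\beta)$ is singular (i.e.\ $\rank(\mathcal{M}(\beta)) < 2n$) if and only if $\rank\bigl((L - \beta^2 M) + \IU\,\beta D\bigr) < n$, which by the first step is equivalent to $\IU\beta \in \sigma(J)$. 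This closes the ``if and only if.''

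There is no real obstacle here; the statement is essentially a packaging result that translates the complex-valued pencil condition into a purely real $2n\times 2n$ singularity condition, which will be convenient for subsequent determinant / Schur-complement computations in the power-system analysis. If anything, the only point worth double-checking is the sign convention in \Cref{lemma: complex representation} (the off-diagonal block carries $-B$ on top and $+B$ on bottom), so I would be careful when matching it to $\mathcal{M}(\beta)$ — but as written the signs agree exactly, so no adjustment is needed.
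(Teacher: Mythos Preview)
Your proposal is correct and follows essentially the same approach as the paper: both invoke \Cref{lemma: relation between ev J and ev J11} to reduce to singularity of $(L-\beta^2 M)+\IU\beta D$ and then pass to the real $2n\times 2n$ block matrix $\mathcal{M}(\beta)$. The only cosmetic difference is that the paper carries out the real/imaginary decomposition of $(A+\IU B)(u+\IU v)=0$ by hand, whereas you invoke \Cref{lemma: complex representation} directly for the rank equality; either route yields the equivalence immediately.
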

\begin{proof}
According to \Cref{lemma: relation between ev J and ev J11}, $\IU  \beta \in \sigma(J)$ if and only if $\exists x\in\mathbb{C}^n,x\ne0$ such that
    \begin{align} \label{eq: pencil matrix singularity theorem proof}
        \left( L - \beta^2 M + \IU  \beta D  \right) x = 0.
    \end{align}
Define $A:=L - \beta^2 M$, $B:=\beta D$, and let $x=u+\IU v$. Rewrite \eqref{eq: pencil matrix singularity theorem proof} as $(A+\IU  B)(u+\IU v) = (Au - Bv) + \IU (Av + Bu) = 0$,
which is equivalent to 
\begin{align*}
\begin{bmatrix}
A & -B \\ B & A 
\end{bmatrix} \begin{bmatrix}
u \\ v
\end{bmatrix} = 0.
\end{align*}
\end{proof}
Now, we are ready to prove \Cref{prop:hyperbolicity n2 n3}:
According to \Cref{lemma: matrix form of pencil sinularity}, $\IU \beta\in\sigma(J)$ for some nonzero real $\beta$ if and only if the matrix
\begin{align*} 
\mathcal{M}(\beta) := \begin{bmatrix}
L - \beta^2 M & -\beta D\\
\beta D & L - \beta^2 M
\end{bmatrix} 
\end{align*}
is singular. Recall that $L:=\nabla P_e (\delta^0)$. In the sequel, we will show under the assumptions of  \Cref{prop:hyperbolicity n2 n3}, $\mathcal{M}(\beta)$ is always nonsingular. First, we prove the theorem for $n=2$. In this case, 
$$
L= \left[ \begin{array}{cc}a_{12}&-a_{12}\\-a_{21}&a_{21}\end{array}\right], a_{12}>0, a_{21}>0.
$$
According to \Cref{lemma: complex representation}, we have $\rank(\mathcal{M}(\beta))=2\rank(L-\beta^2M- \IU  \beta D)$, and $L-\beta^2M- \IU  \beta D$ is full rank because 
\begin{align*}
L-\beta^2M- \IU  \beta D= \left[ \begin{array}{cc}a_{12}-\beta^2 m_1&-a_{12}\\-a_{21}&a_{21}-\beta^2 m_2- \IU  \beta d_2\end{array}\right],
\end{align*}
and $\det(L-\beta^2M- \IU  \beta D)=(a_{12}-\beta^2 m_1)(a_{21}-\beta^2 m_2- \IU  \beta d_2)-a_{12}a_{21}$. It is easy to see that the real part and imaginary parts of the determinant cannot be zero at the same time. Therefore, $ \mathcal{M}(\beta)$ is also nonsingular and a partially damped $2$-generator system cannot have any pure imaginary eigenvalues. 

Now, we prove the theorem for $n=3$. Let $A\in\mathbb{R}^{2n\times 2n}$. For index sets $\mathcal{I}_1\subseteq\{1,\cdots,2n\}$ and $\mathcal{I}_2\subseteq\{1,\cdots,2n\}$, we denote by $A[\mathcal{I}_1,\mathcal{I}_2]$ the (sub)matrix of entries that lie in the rows of $A$ indexed by $\mathcal{I}_1$ and the columns indexed by $\mathcal{I}_2$. For a $3$-generator system, the matrix $L$ can be written as
\begin{align*}
    L = \begin{bmatrix}
    a_{12}+a_{13} & -a_{12} & -a_{13}\\
    -a_{21} &  a_{21}+a_{23}  &-a_{23}\\
    -a_{31} & - a_{32} & a_{31} + a_{32}
    \end{bmatrix}
\end{align*}
where $a_{jk}\ge0, \forall j,k\in\{1,2,3\}, j\ne k$ and $a_{jk}=0 \iff a_{kj}=0$. Moreover, $M=\mathbf{diag}(m_1,m_2,m_3)$ and $D=\mathbf{diag}(0,d_2,d_3)$.
%
We complete the proof in three steps:
\begin{itemize}
\item Step $1$: We show that the first four columns of $\mathcal{M}(\beta)$ are linearly independent, i.e., $\rank(\mathcal{M}(\beta))\ge4$.\\
To do so, we show that the equation 
\begin{align*}
    \mathcal{M}(\beta)\left [ \{1,...,6\},\{1,2,3,4\}\right ] 
    \begin{bmatrix}
x_1 \\ x_2 \\x_3 \\ x_4
\end{bmatrix} = 0
\end{align*}
has only the trivial solution.

\begin{enumerate}[(i)]
	\item If $a_{12}+a_{13}-\beta^2 m_1\ne0$, then $x_4=0$. Moreover, we have $\beta d_2 x_2 = 0$ and $\beta d_3 x_3 = 0$ which imply $x_2=x_3=0$ because $\beta, d_2$, and $d_3$ are nonzero scalars. Finally, the connectivity assumption requires that at least one of the two entries $a_{21}$ and $a_{31}$ are nonzero, implying that $x_1=0$.
	
	\item If $a_{12}+a_{13}- \beta^2 m_1=0$, then by expanding the fifth and sixth rows we get
	\begin{align*}
	    & \beta d_2x_2 -a_{21}x_4=0  \implies x_2=\frac{a_{21}}{\beta d_2}x_4,\\
		& \beta d_3x_3 -a_{31}x_4=0, \implies x_3= \frac{a_{31}}{\beta d_3}x_4.
	\end{align*}
	Expanding the first row and substituting $x_2$ and $x_3$ from above gives
	\begin{align*}
	& -a_{12}x_2-a_{13}x_3=0 \implies -\frac{a_{12} a_{21}}{\beta d_2}x_4 -\frac{a_{13}a_{31}}{\beta d_3}x_4 =0.
	\end{align*}
	The connectivity assumption (and the fact that $a_{kj}\ge0, \forall k\ne j$ and $a_{kj}=0\iff a_{jk}=0$) leads to $x_4=0$. This implies $x_2=x_3=0$ and further $x_1=0$ due to the connectivity assumption.
	
\end{enumerate}

\item Step $2$: We prove that the first five columns of $\mathcal{M}(\beta)$ are linearly independent, i.e., $\rank(\mathcal{M}(\beta))\ge5$.\\
To do so, we show that the equation

\begin{align*}
    \mathcal{M}(\beta) \left [\{1,...,6\},\{1,2,3,4\} \right ] 
    \begin{bmatrix}
x_1 \\ x_2 \\x_3 \\ x_4
\end{bmatrix} = \begin{bmatrix}
0 \\ -\beta d_2 \\0 \\ -a_{12} \\ a_{21}+a_{23}- \beta^2 m_2 \\ -a_{32}
\end{bmatrix}
\end{align*}
has no solution, i.e., the fifth column is not in the span of the first four columns. Based on the equation in the fourth row we consider the following situations:
\begin{enumerate}[(i)]
	\item If $a_{12}+a_{13}- \beta^2 m_1=0$ and $a_{12}\ne0$, then there exists no solution.
	
	\item If $a_{12}+a_{13}- \beta^2 m_1=0$ and $a_{12}=0$, then $a_{13}=\beta^2 m_1$. Expanding the first row yields $-a_{13}x_3=0\implies x_3=0$. Expanding the second row provides $(a_{23}-\beta^2 m_2)x_2=-\beta d_2 \implies x_2=- \frac{\beta d_2}{(a_{23}- \beta^2 m_2)}$. Note that we assume $(a_{23}-\beta^2 m_2)\ne0$, since otherwise the system has no solution.
	Finally, we expand the fifth row and substitute $x_2$ into it:
	\begin{align*}
	 \beta d_2 x_2 = a_{23}-\beta^2 m_2  
	&  \implies -\frac{(\beta d_2)^2}{(a_{23}-\beta^2 m_2)} = a_{23}-\beta^2 m_2 \\ 
	&  \implies -(\beta d_2)^2 = (a_{23}-\beta^2 m_2)^2
	\end{align*}
	which is a contradiction.
	
	\item If $a_{12}+a_{13}-\beta^2 m_1\ne0$ and $a_{12}=0$, then $x_4=0$. By expanding the fifth and sixth rows we get
	\begin{align*}
	& \beta d_2x_2 = a_{23}- \beta^2 m_2  \implies x_2=\frac{a_{23}-\beta^2 m_2}{\beta d_2},\\
	& \beta d_3x_3 = -a_{32} , \implies x_3= -\frac{a_{32}}{\beta d_3}.
	\end{align*}
	Expanding the second row and substituting $x_2$ and $x_3$ from above gives
	\begin{align*}
	 (a_{23}-\beta^2 m_2)x_2-a_{23}x_3 = -\beta d_2 
	 \implies  \frac{(a_{23}-\beta^2 m_2)^2}{\beta d_2} + \frac{a_{23}a_{32}}{\beta d_3} = -\beta d_2
	\end{align*}
   which is a contradiction.
   
   \item If $a_{12}+a_{13}-\beta^2 m_1\ne0$ and $a_{12}\ne0$, then $x_4=\frac{-a_{12}}{a_{12}+a_{13}- \beta^2 m_1}$. By expanding the fifth and sixth rows and substituting $x_4$ we get
   \begin{align*}
   & \beta d_2x_2   + \frac{a_{12}a_{21}}{a_{12}+a_{13}- \beta^2 m_1} = a_{21}+a_{23}- \beta^2 m_2,\\
   & \beta d_3x_3   + \frac{a_{12}a_{31}}{a_{12}+a_{13}-\beta^2 m_1} = -a_{32}.
   \end{align*}
   Now we expand the first row to get $  x_1=\frac{a_{12}x_2+a_{13}x_3}{a_{12}+a_{13}-\beta^2 m_1}$. Finally, we expand the second row and substitute for $x_1, x_2$, and $x_3$:
   \begin{align*}
    -a_{21}\frac{a_{12}x_2+a_{13}x_3}{a_{12}+a_{13}-\beta^2 m_1} + (a_{21}+a_{23}-\beta^2 m_2)x_2 -a_{23}x_3=-\beta d_2, 
   \end{align*}
   which implies
   \begin{align*}
   & (  (a_{21}+a_{23}- \beta^2 m_2) -\frac{a_{12}a_{21}}{a_{12}+a_{13}-\beta^2 m_1} ) x_2  \\
   & - (a_{23}  +\frac{a_{13}a_{21}}{a_{12}+a_{13}-\beta^2 m_1}) x_3 =-\beta d_2, 
   \end{align*}
   or equivalently
   \begin{align*}
   &  \frac{1}{\beta d_2} (  (a_{21}+a_{23}- \beta^2 m_2) -\frac{a_{12}a_{21}}{a_{12}+a_{13}- \beta^2 m_1} )^2 \\
   & + \frac{1}{\beta d_3}  (a_{23}  +\frac{a_{13}a_{21}}{a_{12}+a_{13}- \beta^2 m_1})^2 =-\beta d_2.
   \end{align*}
   which is a contradiction.
\end{enumerate}

\item Step $3$: $\rank(\mathcal{M}(\beta))$ is an even number.\\
Finally, \Cref{lemma: complex representation} precludes the rank of $\mathcal{M}(\beta)$ from being equal to $5$. Therefore, $\rank(\mathcal{M}(\beta))=6$, i.e., $\mathcal{M}(\beta)$ is always nonsingular. This completes the proof.
\end{itemize}

\end{document}